\newtheorem{theorem}{Theorem}[section]
\newtheorem{lemma}[theorem]{Lemma}
\newtheorem{remark}[theorem]{Remark}
\newtheorem{ex}[theorem]{Example}
\newcommand{\R}{\mathbb R}
\newcommand{\EE}{\mathcal{E}}
\newcommand{\diff}[1]{{\mathrm{d}{#1}}}
\newcommand{\bn}{{\mathbf {n}}}
\newcommand{\bx}{\mathbf{x}}
\renewcommand{\div}{\operatorname{div}}
\newcommand{\est}[1]{\left\langle#1\right\rangle}
\newcommand{\bU}{\mathbf{U}}
\newcommand{\bW}{\mathbf{W}}
\newcommand{\bD}{\mathbf{D}}
\newcommand{\bI}{\mathbf{I}}
\newcommand{\bM}{\mathbf{M}}
\newcommand{\dd}{\mathrm{d}}
\newcommand{\bV}{\mathbf{V}}
\newcommand{\mean}[1]{\overline{#1}}
\newcommand{\Ol}{\mathcal{O}}
\newcommand{\LL}{\mathcal{L}}
\newcommand{\Ip}{\mathcal{I}}
\newcommand{\bby}{\mathbf{y}}
\newcommand{\VV}{\mathcal{V}}
\newcommand{\gnum}{g^{\operatorname{num}}}
\newcommand{\norm}[1]{\left\lVert#1\right\rVert}
\newcommand{\fnumj}{f^{\mathrm{num},j}}
\renewcommand{\vec}[1]{\underline{#1}}
\NewDocumentCommand{\mat}{mo}{%
  \IfValueTF{#2}{%
    \underline{\underline{#1}}{#2}
  }{%
    \underline{\underline{#1}}\,
  }%
}
\def\L{\mathcal{L}}
\def\R{\mathbb{R}}
\definecolor{darkspringgreen}{rgb}{0., 0.55, 0.3}
\definecolor{dartmouthgreen}{rgb}{0.05, 0.5, 0.06}
\definecolor{etonblue}{rgb}{0.59, 0.78, 0.64}
\definecolor{airforceblue}{rgb}{0., 0.4, 0.66}
\definecolor{arylideyellow}{rgb}{0.91, 0.84, 0.42}
\definecolor{emerald}{rgb}{0.31, 0.78, 0.47}
\definecolor{uclagold}{rgb}{1.0, 0.7, 0.0}
\definecolor{cadmiumorange}{rgb}{0.93, 0.53, 0.18}
\newcommand{\ResKs}{{{\Phi}^K_\sigma}}
\newcommand{\tildResKs}{{{\tilde{\Phi}}^K_\sigma}}
\newcommand{\hResKs}{{ \hat{\Phi}^K_\sigma}}
\newcommand{\scp}[2]{\left\langle{#1,\, #2}\right\rangle}
\begin{document}
\title{Relaxation Deferred Correction Methods  and their Applications to Residual Distribution Schemes}
\author{ R. Abgrall$\dagger\ddagger$, E. Le M\'el\'edo$\dagger$, P. \"Offner\thanks{Corresponding author: mail@philippoeffner.de}$\star$, and D. Torlo$\ddagger$
 \\ \small
$\dagger${Institute of Mathematics,
University of Zurich, Switzerland}\\
 \small$\star$ Johannes Gutenberg-University Mainz, Germany \\
 \small$\ddagger$  Team CARDAMOM, INRIA Bordeaux - Sud-Ouest, France
}
\date{\today}
\maketitle

%\keywords{test} 
\begin{abstract}
In \cite{abgrall2017dec} is proposed a simplified DeC method, that, when 
combined  with the residual distribution (RD) framework, allows  to construct a high order, explicit FE scheme 
with continuous approximation avoiding the inversion of the mass matrix for hyperbolic problems.
In this paper, we close some open gaps in the context of deferred correction (DeC)
and their application within the RD framework. First, we demonstrate the connection between the DeC schemes and the RK 
methods. With this knowledge, DeC  can be rewritten as a convex combination of explicit Euler steps, showing
the connection to the strong stability preserving (SSP) framework.
Then, we can apply  the relaxation approach introduced
in \cite{ketcheson2019relaxation} and construct 
entropy conservative/dissipative DeC (RDeC) methods, using the entropy correction function proposed in \cite{abgrall2018general}. 
\end{abstract}
%\tableofcontents

\section{Introduction}

Many problems in nature are described either by ordinary differential equations (ODEs) or partial differential equations (PDEs) and the numerical methods that approximate their solutions should preserve the physical quantities of the underlying problem. To keep the positivity for example Patankar approaches  \cite{offner2020arbitrary,huang2019positivity,meister2014unconditionally} 
or adaptive/limiting strategies \cite{kuzmin2020bound,nusslein2020positivity} can be found in the literature, while, recently,  Ketcheson proposed relaxation Runge-Kutta (RRK) methods to 
guarantee conservation or stability with respect to any inner-product norm. In a series of papers, he and collaborators have further extended the relaxation approach to multistep schemes and applied it to different kind of problems, cf. 
 \cite{ketcheson2019relaxation,ranocha2020general,ranocha2020relaxation_2,ranocha2020relaxation}. 
Special attention has been given on entropy conservation/dissipation in the context of hyperbolic equations where the relaxation approach is combined with a semidiscrete entropy conservative/dissipative scheme via a simple method of line.

In this work, we focus also on the relaxation approach, but we apply it, differently from before, on the deferred correction (DeC) method. DeC is based on the Picard-Lindel\"of theorem and gives a simple algorithm to construct arbitrarily high order schemes. 
In the context of time-dependent PDEs, DeC has been recently combined with the residual distribution (RD) method  \cite{abgrall2017dec,remi2} to obtain a matrix-free finite element based high order method for hyperbolic PDE. In \cite{abgrall2018general} a correction on the definition of the residuals allows to preserve the entropy in the spatial discretization. 

Here, we combine the two ideas, entropy residual correction and relaxation of the time integration, to obtain a scheme which is fully discrete entropy conservative/dissipative. Moreover, we are able to preserve the matrix-free character of the DeC RD scheme avoiding the inversion of the mass matrix. 

Therefore, the paper is organized as follows: in \cref{sec:numericalMethods}, we introduce the numerical methods under consideration. First, classical strong stability preserving Runge-Kutta (SSPRK) schemes and the relaxation approach proposed by Ketcheson \cite{ketcheson2019relaxation} are explained. After that we introduce the DeC  method in his simplified version  from \cite{abgrall2017dec} which will be the main topic in this work.
Since we combine DeC later with the residual distribution (RD) methods, we shortly explain also the main ideas of RD, the construction of entropy conservative/dissipative RD schemes for steady state problems, and finally the application of DeC together with RD schemes to solve hyperbolic conservation laws. In \cref{sec_DeC}, we focus on DeC and explain the connection between general RK schemes, the SSP framework and the application of the relaxation technique. Since DeC can be interpreted as an RK scheme for ODE problems (or simple PDE problems with the method of lines (MOL) approach), the relaxation results done by Ketcheson et al. can be  transferred directly to the DeC methods. 
Nevertheless, we need to prove analogous results also for the fully DeC framework as this should lead to a better understanding of the relaxation approach, but it is also necessary due to the fact that we want to apply the relaxation DeC (RDeC) with the RD where a MOL approach is avoided to keep the high-order accuracy property of the method. 
In \cref{sec_DeC_RD}, we demonstrate how to combine RDeC with our semidiscrete entropy conservative/dissipative RD method resulting in fully discrete entropy conservative/dissipative RD schemes where we still avoid the inversion of the mass matrix. In \cref{se:numerics}, we validate all our theoretical results through various numerical simulations both on ODEs and PDEs.
Finally, in \cref{sec:conclusion} we summarize the work done and we give an outlook on possible future applications. In \cref{sec:PhilippWay} we provide another interpretation of RDeC-RD for completeness.

\section{Numerical Methods and Theoretical Considerations }\label{sec:numericalMethods}

We are interested in the numerical solution of 
hyperbolic conservation laws 
\begin{align}\label{hyper}
\begin{split}
\frac{\partial U}{\partial t} + \div{F}(U) = 0,& \quad x \in \Omega \subset \mathbb{R}^d, \quad t\in [0,T]\\
U(x,0) = U_0(x),  & \quad x \in \Omega
\end{split}
\end{align}
where $\Omega$ is the spatial domain.
Like in every finite element based scheme, we divide  $\Omega$ in subdomains (e.g. triangles) denoted by $\Omega_h$.
Further, we use 
$K$ to denote the generic element and $h$ is the mesh size.  
$\sigma$ will be the index of any degree of freedom (DOF) in  $\Omega_h$ (some nodal basis, etc.).
The time interval $[0,T]$ is divided into $N$ segments $T_n$ and the time steps are given by
$\Delta t^n = t^{n+1} - t^n$. \\
The following subsections will explain the temporal and spatial discretization methods
which will be considered through the text in order to create a fully discrete high-order scheme for \eqref{hyper}. 
In this work, we focus mainly on the time integration side and we close the gaps on DeC in combination with RD in several ways.
Important is here the connection with Runge--Kutta framework and, therefore, we shortly repeat the strong stability preserving (SSP) and relaxation approach 
for the ODE case.

\subsection{Strong Stability Preserving Methods}

There are various approaches to solve numerically an ODE. 
A first ansatz is given by finite differences, where the
derivative in time is replaced by 
differences of states in different timesteps.
Backward (implicit) and forward (explicit) Euler are  examples of this kind of strategy.
Another approach would be to  reformulate the ODE
by integrating it in time. With different quadrature formulas and approximation techniques,
one can obtain various RK methods (explicit and implicit ones).
These are a standard tools for solving ODEs. 
We consider the following time-dependent initial 
value problem 
\begin{equation}\label{eq:initial_prob}
\begin{aligned}
y'(t) = f(t,y(t))  \qquad y(t_0)= y_0
\end{aligned}
\end{equation}
where $y:\R\to \R^m$ and $f:\R\times \R^m\to\R^m$. 

A Runge-Kutta method applied to \eqref{eq:initial_prob} takes the form 
 \begin{equation}\label{eq:RK}
 \begin{aligned}
    u_i&=y^n+\Delta t\sum_{j=1}^s a_{ij}f(t_n+c_j\Delta t, u_j) \\
    y(t_n+\Delta t)\approx y^{n+1}&=y^n+ \Delta t \sum_{j=1}^s b_j f(t_n+c_j\Delta t,u_j)
 \end{aligned}
 \end{equation}
 We assume that  $c_j=\sum_i a_{ij} $ holds and we use for brevity $f_i=f(t_n+c_i \Delta t,u_i)$
 for the $i$th stage derivate.
 This can also be written into a Butcher tableau
 \begin{equation}
\label{eq:butcher}
\begin{array}{c | c}
  c & A
  \\ \hline
    & b
\end{array}
\end{equation}
 with $A \in \R^{s \times s}$ and $b, c \in \R^s$.

Instead of using \eqref{eq:RK}, the Runge-Kutta scheme  can also be expressed using 
a Shu-Osher formulation as described in \cite{ferracina2005extension,ketcheson2009optimal}.
An explicit RK scheme has the Shu-Osher form \cite{shu1988efficient}
\begin{equation}\label{eq:Shu-Osher}
\begin{aligned}
u_{0} & =y^n \\
u_i & = \sum_{k=0}^{i-1} \left(\alpha_{i,k} u_{k} +\Delta t \beta_{i,k} f(u_{k})  \right), \qquad \alpha_{i,k}\geq 0, \quad i=1,\cdots, s, \\
y^{n+1} & =u_s,
\end{aligned}
\end{equation}
where we assume that $f$ does only depend on  $t$ through $y$.
Then, consistency requires $\sum_{k=0}^{i-1} \alpha_{i,k}=1$  and  it is also imposed that 
$\beta_{i,k}=0$ whenever $\alpha_{i,k}=0$.
\begin{remark}
The representation \eqref{eq:Shu-Osher} is quite important for the proof of the \textbf{strong stability preserving} (SSP)
property of RK methods. Here, \index{SSP! Strong Stability Preserving} \index{Stability! Strong Stability Preserving (SSP)}
SSPRK methods are a generalization of TVD-RK methods, where instead of the total 
variation of the solution $y$, every convex functional of the solution $y$ is ensured to decrease provided that 
this holds for a step of the simple explicit Euler method with step size $\Delta t_{FE}$.
If $\beta_{i,k} \geq0$, all the intermedia stages $y^{(i)}$ are simply convex combinations 
of forward Euler steps, each with $\Delta t$ replaced by $\frac{\beta_{i,k}}{\alpha_{i,k}}\Delta t$.
There, any bound on a norm, semi-norm, or convex functional of the the solution which is satisfied 
by the explicit Euler method will be preserved by the RK method under the timestep 
restriction $\max_{i,k}\frac{\beta_{i,k}}{\alpha_{i,k}}\Delta t \leq \Delta t_{FE}$. 
For more details about (TVD/SSP) RK methods see the extensive literature 
\cite{gottlieb2011strong,gottlieb1998total,gottlieb2001strong,ketcheson2008highly,levy1998semidiscrete,ranocha2018L2stability}.
\end{remark}
\begin{ex}[SSPRK(3,3)] \label{ex:SSPRK}
The third order using three stages (SSPRK(3,3)) method given in \cite{gottlieb1998total} by Gottlieb and 
Shu is defined as 
\begin{equation}
\begin{aligned}
  u_{1} & =  u_0 + \Delta t f\left(  u_0 \right), \\ 
  u_2 & = \frac{3}{4}  u_0 + \frac{1}{4} u_{1} + \frac{1}{4} \Delta t f\left(  u_{1} \right), 
\\ 
 y^{n+1}=u_3 & = \frac{1}{3}  u_0 + \frac{2}{3} u_{2} + \frac{2}{3} \Delta t f\left(  u_{2} \right).
\end{aligned}
\end{equation}
\end{ex}
\subsection{Relaxation Runge--Kutta Methods}
To explain the relaxation approach, we first follow the spirit of Ketcheson 
 \cite{ketcheson2019relaxation}
and explain the basic framework.
For simplicity reasons, we concentrate again only on the initial value problem  \eqref{eq:initial_prob}.
%\begin{equation}\label{eq:initial_prob}
%\begin{aligned}
%y'(t) &= f(t,y(t))\\
%y(t_0)&= y_0
%\end{aligned}
%\end{equation}
%where $y:\R\to \R^m$ and $f:\R\times \R^m\to\R^m$. 
We focus on problems which are \emph{dissipative (conservative)}
with respect to some inner-product norm, i.e. 
\begin{equation}\label{iq:dissipative}
\frac{\dd }{\diff{t}} \norm{y(t)}^2 =2 \est{y,f(t,y)} \stackrel{(=)}{\leq} 0
\end{equation}
where the equality sign is used for conservative problems. 
Here and throughout, $\est{\cdot,\cdot}$ denotes an inner product and
 $\norm{ \cdot}$ the corresponding norm. For dissipative (conservative) problems,
it is desirable that the numerical solution verifies \eqref{iq:dissipative} discretely, i.e.,
\begin{equation}\label{iq:dissipative_disc}
\norm{y^{n+1}} \stackrel{(=)}{\leq} \norm{y^{n}},
\end{equation}
where $n$ represents the $n$-th timestep. 
The  dissipative (conservative) problems naturally arise in the context of the semi-discretization of hyperbolic problems 
if the space discretization is done in a correct way as we are doing it following \cite{abgrall2018general}.
A method is called \emph{monotonicity preserving} if it guarantees \eqref{iq:dissipative_disc} for all problems satisfying \eqref{iq:dissipative}.
\begin{remark}
The term $\norm{y}^2 $ is called energy in the following and in \cite{ketcheson2019relaxation} the author introduced the relaxation approach to control the increase of the energy in classical RK methods.
The energy is nothing else than a special entropy function in the hyperbolic setting. In \cite{ranocha2020relaxation}, the relaxation approach is extend to general convex quantities, i.e. general entropies. For simplicity reasons, we first focus on the energy but we will include the extension to general entropies later.
\end{remark}

 As it is shown in \cite{glaubitz2016artificial,ketcheson2019relaxation,offner2019analysis} the change of the energy 
 between two steps is given by 
 \begin{equation*}
  \begin{aligned}
   \norm{y^{n+1}}^2-\norm{y}^2= 2\Delta t \sum_{j=1}^s b_j\scp{u_j}{f_j}+2\Delta t \sum_{j=1}^s b_j\scp{y^n-u_j}{f_j}+
   \Delta t^2 \sum_{i,j=1}^s b_ib_j \scp{f_i}{f_j}\\
   \stackrel{\eqref{eq:RK}}{=}
   2\Delta t \sum_{j=1}^s b_j\scp{u_j}{f_j}-2\Delta t^2 \sum_{j,i=1}^s a_{ij}b_j\scp{f_j}{f_i}+
   \Delta t^2 \sum_{i,j=1}^s b_ib_j\scp{f_i}{f_j}
  \end{aligned}
 \end{equation*}
and we have to control the increase of the energy. 
For conservative problems, the first sum is zero and
for dissipative ones it is non positive if $b_j\geq0$. 
However, the remaining terms can destroy these conditions. The main 
idea of the relaxation approach is to change the update formula 
of RK methods by changing the local time step in such way that the 
remaining terms are canceling out. To obtain this,  a relaxation coefficient
$ \gamma_n$ is introduced so that the RK update reads like 

\begin{equation}
 y_{\gamma_n}^{n+1}=y^n+\gamma_n \Delta t  \sum_{j=1}^s b_j f(t_n+c_j\Delta t,u_j),
\end{equation}
and the energy difference becomes 
 \begin{equation*}
  \begin{aligned}
   ||y_{\gamma_n}^{n+1}||^2-||y||^2=   2\gamma_n\Delta t \sum_{j=1}^s b_j\scp{u_j}{f_j}-2\gamma_n\Delta t^2 \sum_{j,i=1}^s a_{ij}b_j\scp{f_j}{f_i}+
   \Delta t^2 \gamma_n^2 \sum_{i,j=1}^s b_ib_j\scp{f_i}{f_j}.
  \end{aligned}
 \end{equation*}
 The last two terms can be deleted by a proper choice of $\gamma_n$.
 We determine the roots of this equation in respect to $\gamma_n$ and get 
 \begin{equation}\label{eq:Relaxation_RK}
 \gamma_n=\frac{2\sum_{j,i=1}^s a_{ij}b_j\scp{f_j}{f_i}}{  \Delta t \sum_{i,j=1}^s b_ib_j\scp{f_i}{f_j}},
 \end{equation}
 where the second root is $\gamma_n=0$ and is not further considered. 
 If the denominator of  \eqref{eq:Relaxation_RK} vanishes, we have $ y^{n+1}=y^n$ and 
 we achieve our aims (i.e. conservation) by taking $\gamma_n=1$. Thus, we define 
\begin{equation}\label{eq:gamma_RK}
\gamma_n :=\begin{cases}
\frac{2\sum_{j,i=1}^s a_{ij}b_j\scp{f_j}{f_i}}{  \sum_{i,j=1}^s b_ib_j\scp{f_i}{f_j}},
\hspace{1cm} \text{if }  \left|\left| \sum_{i=1}^s b_i f_i\right|  \right|^2 \neq 0,\\
1, \hspace{6cm} \text{else. }
\end{cases}
\end{equation}
Since we update the solution time with $\gamma_n\Delta t$, it is important that $\gamma_n$ is bigger than zero. 
In the Runge-Kutta setting Ketcheson formulate the following lemma \cite[Lemma 1]{ketcheson2019relaxation}:
\begin{lemma}\label{le:RK_R}
 Let $\sum b_i a_{i,j}>0,$ let $f$ by sufficiently smooth, and let $\gamma_n$ be defined by \eqref{eq:gamma_RK}. Then $\gamma_n>0$ for sufficiently small $\Delta t>0$. 
\end{lemma}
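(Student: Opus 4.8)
The plan is to regard $\gamma_n$ as a function of the step size, $\gamma_n=\gamma_n(\Delta t)=N(\Delta t)/D(\Delta t)$ with $N(\Delta t)=2\sum_{i,j}a_{ij}b_j\scp{f_j}{f_i}$ and $D(\Delta t)=\sum_{i,j}b_ib_j\scp{f_i}{f_j}=\norm{\sum_i b_if_i}^2$, and to show that this ratio extends continuously to $\Delta t=0$ with a strictly positive value; positivity for all sufficiently small $\Delta t>0$ is then immediate. Here the stage derivatives $f_i=f_i(\Delta t)=f(t_n+c_i\Delta t,u_i(\Delta t))$ depend on $\Delta t$ only through the explicit stage equations of \eqref{eq:RK}.

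The first step is a regularity observation: since $f$ is sufficiently smooth and the method is explicit, each $u_i(\Delta t)$ — being assembled from $y^n$ by finitely many evaluations of $f$ and additions — is a smooth function of $\Delta t$ near $0$, and hence so are the $f_i$, $N$ and $D$. The second step is to evaluate the limit $\Delta t\to 0$: every stage collapses, $u_i(0)=y^n$, so $f_i(0)=f(t_n,y^n)=:f^n$ for all $i$. Substituting, one gets $D(0)=\norm{f^n}^2\bigl(\sum_i b_i\bigr)^2=\norm{f^n}^2$ (using the consistency relation $\sum_i b_i=1$) and $N(0)=2\norm{f^n}^2\sum_{i,j}b_i a_{ij}$ after collecting the scalar coefficients, i.e. $N(0)/D(0)=2\sum_{i,j}b_i a_{ij}$, precisely the quantity occurring in the hypothesis.

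In the generic case $f^n\neq 0$ the proof is then complete: $D(0)=\norm{f^n}^2>0$, so $D(\Delta t)>0$ on a right neighbourhood of $0$, the second (``else'') branch of \eqref{eq:gamma_RK} is never triggered there, and $\gamma_n(\Delta t)\to 2\sum_{i,j}b_i a_{ij}>0$ by assumption; continuity of $\gamma_n$ then forces $\gamma_n(\Delta t)>0$ for $\Delta t$ small. (The same computation also shows $\gamma_n=1+O(\Delta t)$, matching the naive expectation $\gamma_n\approx 1$.)

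The remaining point — which I expect to be the main obstacle — is the degenerate configuration $f^n=0$, where $N$ and $D$ both vanish at $\Delta t=0$ and the quotient is a genuine $0/0$. If the increment is identically zero near $0$, i.e. $\sum_i b_i f_i(\Delta t)\equiv 0$, then $D\equiv 0$ and $\gamma_n:=1>0$ by definition and there is nothing to prove. Otherwise one must push the Taylor expansion to the first non-vanishing order: differentiating the stage equations at $\Delta t=0$ gives $u_i'(0)=\bigl(\sum_j a_{ij}\bigr)f^n=0$, hence $f_i(\Delta t)=\Delta t\,c_i\,\partial_t f(t_n,y^n)+O(\Delta t^2)$, so that $N(\Delta t)$ and $D(\Delta t)$ are both $O(\Delta t^2)$ and $\gamma_n(\Delta t)$ converges to a rational expression in the $b_i,c_i,a_{ij}$ times $\norm{\partial_t f(t_n,y^n)}^2/\norm{\partial_t f(t_n,y^n)}^2$; verifying positivity of that leading-order quotient — and, should it too degenerate, iterating the expansion to still higher order — is the delicate part of the argument. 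Combining the cases and invoking continuity of $\gamma_n$ on a one-sided neighbourhood of $0$ finishes the proof.
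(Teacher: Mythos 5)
The paper does not actually prove this lemma: it is imported verbatim from \cite{ketcheson2019relaxation} (their Lemma~1), and the only accompanying remark is that the hypothesis holds automatically for methods of order at least two because $\sum_{i,j} b_i a_{ij}=\tfrac12$. Your argument is the standard proof of that cited result: use smoothness of the explicit stages in $\Delta t$, observe $f_i\to f(t_n,y^n)=:f^n$ as $\Delta t\to 0$, and conclude that the quotient tends to $2\sum_{i,j}b_i a_{ij}>0$ when $f^n\neq 0$. That part is correct and complete, so in the generic case your proof is fine and coincides with the argument the authors are implicitly relying on.

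The genuine gap is the degenerate case $f^n=0$, which you correctly flag but do not close --- and it cannot be closed from the stated hypothesis alone. Carrying out the expansion you sketch, $f_i=\Delta t\,c_i\,\partial_t f(t_n,y^n)+O(\Delta t^2)$, so the limiting quotient becomes $2\sum_{i,j}a_{ij}b_j c_i c_j/\bigl(\sum_i b_i c_i\bigr)^2$, and positivity of $\sum_{i,j}a_{ij}b_j c_i c_j$ does not follow from $\sum_{i,j}b_i a_{ij}>0$; worse, for an autonomous $f$ one has $\partial_t f=0$ and must push to still higher order, where the leading coefficient involves yet other combinations of the Butcher coefficients. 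So the lemma has to be read with the implicit nondegeneracy assumption $f(t_n,y^n)\neq 0$ (which is how the cited reference treats it), or the degenerate case must be excluded explicitly; note also that the ``else'' branch of \eqref{eq:gamma_RK} only covers the situation $\sum_i b_i f_i=0$ exactly, not the $0/0$ limit, so it does not rescue this case. A last small slip: your parenthetical claim $\gamma_n=1+O(\Delta t)$ requires $\sum_{i,j}b_i a_{ij}=\tfrac12$, i.e.\ order at least two; under the lemma's hypothesis alone the limit is $2\sum_{i,j}b_i a_{ij}$, not necessarily $1$.
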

This is naturally fulfilled for every RK method of order two or higher since from the order 
conditions of the RK, it is known that $\sum b_i a_{i,j}=\frac{1}{2}$.

\begin{remark}
Further results about relaxation RK methods can be found in  \cite{ketcheson2019relaxation,ranocha2020relaxation,ranocha2020general}
including also extension to multistep methods and to general entropy relaxation (instead of energy) and so on. However, for our purpose 
this introduction is enough. 
In the above mentioned literature, one can find also a discussion about 
consistency and accuracy  related to the fact that 
$\sum_j \gamma_n b_j\neq 1$ is included. However, to shorten this part, we stress out (proofs can be found in above literature) that 
$\gamma_n= 1+\Ol(\Delta t^{p-1})$ hold where $p$ denotes the order of the RK method. 
\end{remark}

\begin{remark}
	The relaxation methods are very useful in the conservative tests, as they allow to preserve the exact entropy/energy level. In the dissipative case, they are reliable providing physically coherent and accurate simulations, though not reaching the exact entropy/energy level. Hence, we will focus more on entropy/energy conservative tests than dissipative ones, giving, nevertheless, a general description of the methods.
\end{remark}

\subsection{Deferred Correction Methods}

The idea of DeC schemes as introduced in \cite{dutt2000dec}
is based on the 
 Picard-Lindel\"of Theorem in the continuous setting
 and the classical proof makes use of Picard iterations 
 to minimize the error and prove convergence. 
 The foundation of DeC relies on mimicking these Picard iterations 
 at the discrete level and also decrease the approximation error
 in  several iterative steps. To describe DeC, we follow the 
 approach presented in \cite{abgrall2017dec}.
For the description, two operators are introduced: $\LL^1$ and $\LL^2$.
Here, 
the $\LL^1$ operator represents a low-order easy-to-solve numerical scheme,
e.g. the explicit Euler method, 
and $\LL^2$ is a high-order operator that can present difficulties in
its practical solution, e.g. an implicit RK scheme.
The DeC method can be written as a combination of these two operators.\\
Given a time interval $[t^n, t^{n+1}]$, we subdivide 
it into $M$ subintervals  $\lbrace [t^{n,m-1},t^{n,m}]\rbrace_{m=1}^M$,
where $t^{n,0} = t^n$ and $t^{n,M} = t^{n+1}$. There, we  mimic for every %subtimestep 
subinterval $[t^0, t^m]$ the Picard--Lindel\"of Theorem for both operators $\LL^1$
and $\LL^2$. We drop the dependency on the timestep $n$ for subtimesteps $t^{n,m}$ 
and substates $y^{m,n}$ as denoted in Figure \ref{Fig:Time_interval}.  
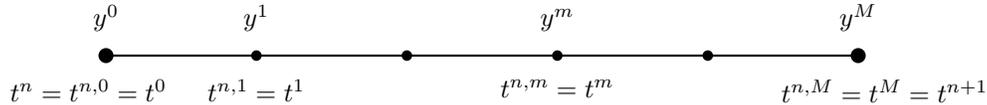
\begin{figure}[ht]
	\centering
\begin{tikzpicture}
\draw [thick]   (0,0) -- (10,0) node [right=2mm]{};
% nodes
\fill[black]    (0,0) circle (1mm) node[below=2mm] {$t^n=t^{n,0}=t^0 \,\, \quad$} node[above=2mm] {$y^0$}
                            (2,0) circle (0.7mm) node[below=2mm] {$t^{n,1}=t^1$} node[above=2mm] {$y^1$}
                            (4,0) circle (0.7mm) node[below=2mm] {}
                            (6,0) circle (0.7mm) node[below=2mm] {$t^{n,m}=t^m$} node[above=2mm] {$y^m$}
                            (8,0) circle (0.7mm) node[below=2mm] {}
                            (10,0) circle (1mm) node[below=2mm] {$\qquad t^{n,M}=t^M=t^{n+1}$} node[above=2mm] {$y^M$}; 
\end{tikzpicture} \caption{Time interval divided into subintervals}\label{Fig:Time_interval}
\end{figure}
Then, the $\LL^2$ operator is given by
\begin{equation}\label{eq:L2operator}
\LL^2(y^0, \dots, y^M) :=
\begin{cases}
y^M-y^0 -\int_{t^0}^{t^M} \Ip_M ( f(y^0),\dots,f(y^M))\\
\vdots\\
y^1-y^0 - \int_{t^0}^{t^1} \Ip_M ( f(y^0),\dots,f(y^M))
\end{cases}.
\end{equation}
Here, the term $\Ip_M$  denotes an interpolation polynomial of order $M$ evaluated at the points $\lbrace  t^{r}\rbrace _{r=0}^M$. 
In particular, we use Lagrange polynomials $\lbrace \varphi_r \rbrace_{r=0}^M$, which fulfills $\varphi_r(t^{m})=\delta_{r,m}$ and satisfy the property $\sum_{r=0}^M \varphi_r(s) = 1$ for any $s\in [t^0,t^M]$. 
Using these properties, 
we can actually compute the integral of the interpolants thanks to a high order quadrature rule in the same points
$\lbrace t^m \rbrace_{m=0}^M$ , with weights $$\theta_r^m := \frac{1}{\Delta t}\int_{t^0}^{t^{m}} \varphi_r(s) \dd s . $$
resulting in 
\begin{equation}\label{eq:L2}
\LL^2(y^0, \dots, y^M) =
\begin{cases}
y^M-y^0 - \Delta t \sum_{r=0}^M \theta_r^M f(y^r)\\
\vdots\\
y^1-y^0 - \Delta t \sum_{r=0}^M \theta_r^1 f(y^r)
\end{cases}.
\end{equation}
The $\LL^2$ operator represents an $(M+1)$th order numerical scheme if set equal
to zero, i.\,e., $\LL^2(y^0, \dots, y^M)=0$. Unfortunately, the resulting scheme is implicit and, further, the terms $f$ may be nonlinear. As 
a consequence, the only $\LL^2$ formulation is not explicit and more efforts have to be  made to solve it.
For this purpose, we introduce a simplification of  the $\LL^2$ operator. Instead of using a quadrature formula at the points $\lbrace t^m \rbrace_{m=0}^M$ we evaluate 
the integral in equation \eqref{eq:L2operator} applying the left Riemann sum approximation. 
The resulting operator $\LL^1$ is given by the forward Euler discretization for each state $y^m$ in the time interval, i.\,e.,  
\begin{equation}\label{eq:L1}
\LL^1(y^0, \dots, y^M) :=
\begin{cases}
 y^M-y^0 - \beta^M \Delta t f(y^0) \\
\vdots\\
y^1- y^0 - \beta^1 \Delta t f(y^0)
\end{cases}
\end{equation}
with coefficients $\beta^m:=\frac{t^m-t^0}{t^M-t^0}$.\\
To simplify the notation and to describe  DeC, we  introduce the matrix of states for the variable $y$ at all subtimesteps.

\begin{align}\label{eq:definition_bbc}
&\bby :=  (y^0, \dots, y ^M) \in \R^{M\times I}, \text{ such that }\\
&\LL^1(\bby) := \LL^1(y^0, \dots, y^M) \text{ and } \LL^2(\bby) := \LL^2(y^0, \dots, y^M) .
\end{align}
So far,  the DeC algorithm uses a combination of the $\LL^1$ and $\LL^2$ operators
to provide an iterative procedure. The aim is to recursively approximate $\bby^*$, the numerical solution of
the $\LL^2=0$ scheme, similarly to the Picard iterations in the 
continuous setting. The successive states of the iteration process will be denoted 
by the superscript $(k)$, where $k$ is the iteration index, e.g. $\bby^{(k)}\in \R^{M\times I}$.
The total number of iterations (also called correction steps in the following) is denoted by $K$.
To describe the procedure, we have to refer to both 
 the $m$-th subtimestep and the $k$-th iteration of the DeC algorithm. We will indicate the variable by $y^{m,(k)} \in \R^I$.
 Finally, the DeC method can be written as \\

 \centerline{\textbf{DeC Algorithm}}

\begin{equation}\label{DeC_method}
\begin{split}
&y^{0,(k)}:=y(t^n), \quad k=0,\dots, K,\\
&y^{m,(0)}:=y(t^n),\quad m=1,\dots, M\\
&\LL^1(\bby^{(k)})=\LL^1(\bby^{(k-1)})-\LL^2(\bby^{(k-1)}) \text{ with }k=1,\dots,K,
\end{split}
\end{equation}
where $K$ is the number of iterations that we  want to compute. 
Using the procedure \eqref{DeC_method}, we  need, in particular, 
as many iterations as the desired  order of accuracy $p$, i.\,e., $K=p= M+1$.

\begin{ex}{Second and third order DeC}\label{ex:RK}
\begin{itemize}
 \item Actually, we have only the two endpoints\footnote{For simplicity, we neglect the dependence on $n$ here.}
 $t^0=0$ and $t^1=1$ for a second order scheme.\
 We calculate the first approximation at $t^1=1$ using the explicit Euler method.
Afterwards, the iteration step \eqref{DeC_method}
is used which results finally in the following update:
 \begin{equation*}\label{eq:}
  \begin{aligned}
   y^1&=y^0+\Delta t f(t^0,y^0),\\
   y^{1,(1)} &=y^0+\frac{1}{2} \Delta t \left(f(t^n ,y^0)+f(t^1,y^1) \right).
  \end{aligned}
 \end{equation*}
 which is equivalent to the SSPRK(2,2)  of two stages and 
 second order  given in its Butcher
tableau:
\begin{equation*}
\begin{aligned}
  \def\arraystretch{1.2}
  \begin{array}{c|cc}
  0 &  & \\
  1 & 1 & \\
  \hline
  & \frac{1}{2} & \frac{1}{2}
  \end{array}.
\end{aligned}
\end{equation*}
\item Next, we want to consider the third order DeC scheme. We use equispaced nodes
which intersect with  Gauss-Lobatto nodes in this case.
The values are $t^0=0$, $t^1=\frac{1}{2}$ and $t^2=1$. The algorithm leads to the following 

\begin{equation*}\label{DEC_REMI_third}
\begin{aligned}
    y^{1,(1)}&=y^0+\frac{1}{2}\Delta t f(t^0,y^0),\\
    y^{2,(1)}&=y^0+\Delta t f(t^0,y^0),\\
    \text{First Correction}:&\\
    y^{1,(2)}&=y^0+ \Delta t \left(
 \frac{5}{24} f(t^0,y^0)+\frac{1}{3} f(t^1, y^{1,(1)})-\frac{1}{24} f(t^2, y^{2,(1)})\right),\\
   y^{2,(2)} &=y^0 +  \Delta t \left( \frac{1}{6} f(t^0,y^0)+\frac{2}{3} f(t^1, y^{1,(1)})+\frac{1}{6} f(t^2, y^{2,(1)})\right),\\
 \text{Second Correction}:&\\
  y^{n+1}=y_2^{2,(3)} &=y_0 +\Delta t\left( \frac{1}{6} f(t^0,y^0)+\frac{2}{3} f(t^1, y^{1,(2)})+\frac{1}{6} f(t^2,  y^{2,(2)})\right).
\end{aligned}
\end{equation*}
Here, we have ignored the update for $y^{1,(3)}$ since it does not effect $y^{n+1} $. This is different for the classical DeC as described in \cite{dutt2000dec} and investigated in \cite{liu2008strong}.
\end{itemize}
However, as before, we can interpret DeC(3) in form of a RK methods. The Butcher tableau reads 
 \begin{equation*}
 \begin{aligned}
 \begin{array}{c|cccccc}
  0 & 0& & & & & \\
  \frac{1}{2} &\frac{1}{2} &0 & & &\\
  1 &1 & 0 &0 & & & \\
  \frac{1}{2}&\frac{5}{24} &  \frac{1}{3}& -\frac{1}{24} & 0&0 \\
  1& \frac{1}{6} &\frac{4}{6}&\frac{1}{6} & 0&0 \\
  \hline
  1&\frac{1}{6} &0 & 0 & \frac{4}{6}& \frac{1}{6}
  \end{array}.
\end{aligned}
\end{equation*}
\end{ex}

\subsection{Semidiscrete Entropy Conservative Residual Distribution Methods}
The focus of this work is on the DeC framework in combination with RD. Since a method of lines 
approach, i.e., splitting between space and time discretization, will destroy the high order of accuracy of RD methods
(cf. \cite{abgrall2012review}), we start to explain RD for a steady state problem 
\begin{equation}\label{steadyversion}
\div F(U) = 0.
\end{equation}
Nowadays RD schemes are embed in the finite element (FE) setting. Therefore, 
$V_h$ is defined as the space of globally continuous, piecewise polynomial functions of degree $p$:\begin{equation}
\VV^h := \left\{ U \in C^0 (\Omega_h), U_{|K} \in \mathbb{P}^p, \forall K \in \Omega_h  \right\}.
\end{equation}
$\mathbb{P}^p$ denotes the space of polynomials of order $p$.
With this definition an approximation of a solution of \eqref{steadyversion} can be written as a finite linear combination of a suitable choice of base functions of $V_h$, which are denoted by $\varphi_\sigma$:
\begin{equation}
U(x,t_n) \approx U^{h,n} = \sum_{\sigma \in \Omega_h} U_\sigma^n \varphi_\sigma(x), \; x \in \Omega. \label{fem}
\end{equation}
In general one is free regarding the choice of basis functions as long as 
span$\{ \varphi_{\sigma | K}\} = \mathbb{P}^p$.
However, it has been shown in \cite{remi2}
that there are additional restrictions on the basis functions when combining RD with  DeC. 
Here, the condition 
$
\int_K  \varphi_\sigma \diff x > 0
$
must be also fulfilled \cite{remi2}. Therefore,  Bernstein polynomials or cubature elements (Lagrangian polynomials defined on quadrature points \cite{cubature}) will be used in our case (for the space discretization).
Once the setup is done, the approach works in three steps: \\
\begin{enumerate}
\item Define $\Phi^K(U) := \int_K \nabla \textbf{F}(U)\text{d}x$ which is called the total residual of an element $K$;
\item Define $\Phi_\sigma^K$ as the contribution of a DOF $\sigma$ to the total residual of the element $K$. Thus, the following equation holds: 
$
\Phi^K(U^h) = \sum_{\sigma \in K}\ \Phi_\sigma^K, \quad \forall K \in \Omega_h.
$
The whole RD strategy is determined by the way the total residual of an element is distributed among its DOFs $\sigma$. Important is that for any element $K$ and any $U^h \in \VV^h $ the conservation relation holds:
\begin{equation}\label{eq:residual_basic}
    \sum\limits_{\sigma \in K} \ResKs(U^h) =\oint_{\partial K}\sum\limits_j \fnumj \left(U_{|K}^h, U_{|K^-}^h \right) \cdot \nu_j,
\end{equation}
where the symbol $\oint$ denotes the integral done via quadrature rules;
\item Finally, all surrounding residual contributions at one  DOF $\sigma$ are collected and summed up. We get 
$
\sum_{K|\sigma \in K} \Phi_\sigma^K= 0, \quad \forall \sigma \in \Omega_h
$
will finally enable the calculation of the coefficients $U_\sigma^n$ in  \eqref{fem}.
\end{enumerate}
When needed, we can also include the boundary elements $\Gamma$ in our consideration and then the update scheme reads like 
\begin{equation}\label{rdScheme}
\sum_{K|\sigma \in K} \Phi_\sigma^K + \sum_{\Gamma | \sigma \in \Gamma} \Phi_{\sigma}^{\Gamma}= 0, \quad \forall \sigma \in \Omega_h
\end{equation}
The choice of $ \Phi_\sigma^K$ determines the scheme. 
\begin{ex}
A pure Galerkin scheme can be written as
$
 \Phi_\sigma^K(U^h) =  \int_{K} \varphi_\sigma \nabla \cdot \textbf{F}(U^h)  \diff x
$
 with additional jump stabilization and using Gauss' theorem it becomes:
 \begin{equation*}
\Phi_\sigma^K(U^h) = \oint_{\partial K}\varphi_\sigma \textbf{F}(U^h)\cdot \textbf{n} \diff \gamma - \oint_K \nabla \varphi_\sigma \cdot \textbf{F}(U^h) \diff x + \sum_{e \in K} \theta h^2_e \oint_e  \left[ \nabla U^h \right] \cdot \left[ \nabla \varphi_\sigma \right] \diff \lambda
\end{equation*}
with jump $ \left[ \nabla U^h \right]= \nabla U^h_{|_K}-\nabla U^h_{|_{K^+}}$. 
More RD formulations for different schemes including discontinuous Galerkin,
flux reconstruction and nonlinear limiters can be found in \cite{abgrall2018connection,abgrall2021analysis}.
\end{ex}

\subsubsection*{Entropy Correction Term}\label{se:Correction}
Since we want to construct fully entropy conservative/dissipative  RD schemes,
we follow the approach presented in \cite{abgrall2018general,abgrall2019reinterpretation} and add 
an entropy correction function to our steady-state space residual.
Let $\eta$ be an entropy,  $g$ the corresponding entropy flux and $\partial_u \eta(u)=v$ is the entropy variable \cite{harten1983symmetric}.
The entropy equality using the RD framework reads
\begin{equation}\label{eq:entropy_condition}
\sum_{\sigma\in K }\scp{V_\sigma}{\tildResKs} =
\oint_{\partial K} \gnum  \left(V_{|K}^h, V_{|K^-}^h \right) \bn \diff \gamma,
\end{equation}
where  $V_{\sigma}$ is the approximationat the DOF $\sigma$.
Since \eqref{eq:entropy_condition} is not fulfilled for general $\ResKs$,
the entropy correction terms $ r_\sigma^K$ is added to the residuals $\ResKs$ to guarantee \eqref{eq:entropy_condition}.
In addition, we have to select these correction terms
such that they do not violate the conservation relation. We introduce the following definition of the entropy-corrected residuals
\begin{equation}\label{eq:residual_corr}
\tildResKs= \ResKs+r_\sigma^K
\end{equation}
with the goal of fulfilling  the discrete entropy condition \eqref{eq:entropy_condition}.
In \cite{abgrall2018general}, the following correction terms are presented
\begin{align}
  \label{eq:correction}
  r_\sigma^K := \alpha(V_\sigma -\mean{V} ),
  \quad \text{with }
  \mean{V} = \frac{1}{\#K}  \sum_{\sigma \in K } V_\sigma,
  \\
  \label{eq:error_abgrall}
  \alpha = \frac{\EE}{\sum_{\sigma \in K } \left(V_\sigma  -\mean{V} \right)^2},
  \quad
  \EE:= \oint_{\partial K} \gnum \left(V_{|K}^h, V_{|K^-}^h \right) \bn -
  \sum_{\sigma\in K }\scp{V_\sigma}{\ResKs}.
\end{align}
In addition,  extensions and   re-interpretations of the terms  can be found in \cite{abgrall2019reinterpretation}
where the following theorem is also  proven:
\begin{theorem}\label{Theo:Abgrall}
The correction term
  \eqref{eq:correction} with \eqref{eq:error_abgrall} satisfies 
  \begin{equation}\label{eq:system}
  \begin{split}
      \sum_{\sigma \in K}r_\sigma^K= 0,
      \qquad
      \sum_{\sigma \in K} \scp{V_\sigma}{r_\sigma^K}= \EE.
  \end{split}
  \end{equation}
  By adding \eqref{eq:correction} to the residual $\ResKs$,
  the resulting scheme using $\tildResKs$ is locally conservative in $u$ and
  entropy conservative.
\end{theorem}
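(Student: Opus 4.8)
The plan is to first verify the two algebraic identities in \eqref{eq:system} and then read off the two conservation claims as immediate corollaries of them, combined with the elementwise conservation relation \eqref{eq:residual_basic} and the very definition of $\EE$ in \eqref{eq:error_abgrall}.

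For the first identity, since $\alpha$ in \eqref{eq:error_abgrall} is a scalar constant on the element $K$, I would pull it out of the sum and use the definition of the mean:
\[
\sum_{\sigma \in K} r_\sigma^K = \alpha \sum_{\sigma \in K}\bigl(V_\sigma - \mean{V}\bigr) = \alpha\Bigl(\sum_{\sigma \in K} V_\sigma - \#K\,\mean{V}\Bigr) = 0,
\]
i.e. the deviations from the mean sum to zero. For the second identity I would expand $V_\sigma = (V_\sigma - \mean{V}) + \mean{V}$ inside the bracket and use bilinearity of $\scp{\cdot}{\cdot}$:
\[
\sum_{\sigma \in K}\scp{V_\sigma}{r_\sigma^K} = \alpha\sum_{\sigma \in K}\scp{V_\sigma - \mean{V}}{V_\sigma - \mean{V}} + \alpha\,\Bigl\langle \mean{V},\ \sum_{\sigma \in K}(V_\sigma - \mean{V})\Bigr\rangle .
\]
The last term vanishes by the first identity, leaving $\alpha\sum_{\sigma \in K}\norm{V_\sigma - \mean{V}}^2$, and substituting $\alpha$ from \eqref{eq:error_abgrall} — reading the denominator $(V_\sigma - \mean{V})^2$ as the squared norm $\scp{V_\sigma - \mean{V}}{V_\sigma - \mean{V}}$ — cancels the denominator and gives exactly $\EE$. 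The only subtle point is the degenerate case in which that denominator vanishes: then all $V_\sigma$ on $K$ coincide with $\mean{V}$, so $\EE = \oint_{\partial K}\gnum\,\bn - \scp{\mean{V}}{\sum_{\sigma \in K}\ResKs}$, which is zero by consistency of the numerical entropy flux with the numerical flux together with \eqref{eq:residual_basic}, and the convention $r_\sigma^K := 0$ there is consistent with both identities.

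Given \eqref{eq:system}, the conservation claims follow in one line each. Summing $\tildResKs = \ResKs + r_\sigma^K$ over $\sigma \in K$ and using $\sum_{\sigma \in K} r_\sigma^K = 0$ shows that $\sum_{\sigma \in K}\tildResKs = \sum_{\sigma \in K}\ResKs$ still satisfies the conservation relation \eqref{eq:residual_basic}, so the corrected scheme is locally conservative in $u$. Pairing instead with $V_\sigma$ and summing gives
\[
\sum_{\sigma \in K}\scp{V_\sigma}{\tildResKs} = \sum_{\sigma \in K}\scp{V_\sigma}{\ResKs} + \EE,
\]
and inserting the definition of $\EE$ cancels the $\sum_{\sigma \in K}\scp{V_\sigma}{\ResKs}$ terms, leaving $\oint_{\partial K}\gnum(V_{|K}^h, V_{|K^-}^h)\,\bn\,\diff\gamma$, which is precisely the discrete entropy identity \eqref{eq:entropy_condition}. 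There is no real obstacle here beyond bookkeeping; the one thing worth stating carefully is the inner-product convention in \eqref{eq:error_abgrall} that makes the cancellation in the second identity exact, and the handling of the degenerate denominator.
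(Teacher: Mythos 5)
Your proof is correct. Note that the paper itself does not prove Theorem~\ref{Theo:Abgrall} at all --- it only states it and refers to \cite{abgrall2019reinterpretation} for the proof --- so there is no in-paper argument to compare against; your direct verification (deviations from the mean sum to zero, then the $\mean{V}$ cross-term drops out and $\alpha$ cancels the squared-norm sum to leave $\EE$, after which local conservation and the entropy identity \eqref{eq:entropy_condition} follow by summing $\tildResKs=\ResKs+r_\sigma^K$ with and without the pairing against $V_\sigma$) is the standard one and matches what the cited reference does. Your handling of the degenerate denominator is also the right observation to flag, though the claim that $\EE=0$ when all $V_\sigma$ coincide rests on a compatibility assumption between $\gnum$ and $\fnum$ that the paper leaves implicit.
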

 However, entropy conservation is most of the time not enough in the context of nonlinear hyperbolic conservation laws. 
 Especially, in the presence of discontinuities (i.a. shocks), the scheme should not just fulfill the equality \eqref{eq:entropy_condition} but rather an inequality 
\begin{equation}\label{eq:entropy_condition_second}
\sum_{\sigma\in K }\scp{V_\sigma}{\hResKs}  \geq
\oint_{\partial K} \gnum  \left(V_{|K}^h, V_{|K^-}^h \right) \bn \diff \gamma.
\end{equation}
 To obtain a semidiscrete entropy dissipative scheme, 
 we apply the previous construction and write the new residual as 
 \begin{equation}\label{eq:residual_corr_diss}
\hResKs= \ResKs+r_\sigma^K+\Psi_\sigma^K,
\end{equation}
 where $r_\sigma^k$  are defined by \eqref{eq:correction}. The $\Psi_\sigma^K$ satisfy 
 \begin{equation}\label{eq:system_second}
       \sum_{\sigma \in K} \Psi_\sigma^K= 0 \text{ and }     \sum_{\sigma \in K} \est{V_\sigma, \Psi_\sigma^K} \geq 0. 
 \end{equation}
Two expressions for $\Psi_\sigma^K$ that can be used to enforce the inequality, not violating the conservation requirement, are \textbf{streamline or jump diffusion}. In this work, we apply only the jump 
diffusion, defined by 
\begin{equation}\label{eq:jumpStabilization}
\Psi_\sigma^K := \nu h_K^2 \oint_{\partial K} [\nabla_\bx \varphi_\sigma] \cdot [\nabla_\bx V^h]  \diff \lambda
\end{equation}
which ensure 
$
 \sum_{\sigma \in K} \est{V_\sigma, \Psi_\sigma^K} =  
 \nu h_K^2 \sum_{e \in K} \oint_{\partial K} \cdot [\nabla_\bx V^h]^2  \diff \lambda \geq 0
$
for any $\nu>0$. 
We apply this correction term in the nonlinear case to guarantee the inequality in presence of shocks. 

\begin{remark}
	The presented entropy corrections for residual distribution schemes must be chosen entropy conservative, as in \eqref{eq:system}, or entropy dissipative, as in \eqref{eq:system_second}. This choice must be done a priori knowing the behavior of the problem.
\end{remark}

\subsection{Residual Distribution for Hyperbolic Problems}\label{sub:residual}
After describing the general construction of RD and DeC schemes, they can be coupled  
to a  explicit space-time FE scheme for the initial value problem \eqref{hyper}. We like to point out that this approach has several similarities and connections to the modern ADER approach \cite{veiga2020dec}.\\
The classic DeC algorithm does not change what actually is changing are the 
$\LL^1$ and $\LL^2$ operators. Here, we have to take the steady space residuals  $\Phi_{\sigma,x}^K$  into account. Following  \cite{abgrall2017dec,remi2}, we have 
 \begin{equation*}
 \begin{gathered}
    \mathcal{L}_\sigma^1(U^{(k)})
    =
 \begin{pmatrix}
|C_\sigma|(U^{n,M,(k)}_\sigma - U^{n,0}_\sigma) + \Delta t \beta_M \sum\limits_{K|\sigma \in K} 
 \Phi_{\sigma,x}^K(U^{n,0}) \\\ 
\vdots \\\ 
|C_\sigma|(U^{n,1,(k)}_\sigma - U^{n,0}_\sigma) + \Delta t \beta_1\sum\limits_{K|\sigma \in K}
\Phi_{\sigma,x}^K(U^{n,0})
\end{pmatrix},
  \\
      \mathcal{L}_\sigma^2(U^{(k)})
    =
 \begin{pmatrix}
 \sum\limits_{K|\sigma \in K} \left( \oint_K \varphi_\sigma \left(U^{n,M,(k)}-U^{n,0} \right)  + \Delta t
 \sum\limits_{r=0}^M \theta_r^M \Phi_{\sigma,x}^K(U^{n,r,(k)}) \right) \\\ 
\vdots \\\ 
\sum\limits_{K|\sigma \in K} \left(\oint_K \varphi_\sigma \left(U^{n,1,(k)}-U^{n,0} \right)+ \Delta t
\sum\limits_{r=0}^M \theta_r^1 \Phi_{\sigma,x}^K(U^{n,r,(k)})\right)
\end{pmatrix} .
\end{gathered}
\end{equation*}
where $ \beta_i$, $\theta_r^{i}$ are the quadrature weights for the time integration
and $ |C_\sigma|:=\oint_K \varphi_\sigma$. The term $|C_\sigma|$ can be seen as some introduced mass lumping effect  and should be positive definite.
The $l$-th line  simply becomes 
\begin{equation}\label{oneline}
U_\sigma^{n,l,(k)} = U_\sigma^{n,l,(k-1)} -
|C_\sigma|^{-1} 
\sum_{K|\sigma \in K}\bigg(\oint_K \varphi_\sigma (U^{n,l,(k-1)} - U^{n,0}) + 
\Delta t \sum_{r=0}^M \theta_{r}^{l} \Phi_{\sigma,x}^K(U^{n,r,(k-1)}) \bigg),
\end{equation}
that we will also use in its vector formulation, denoting with $\bM_{ij}=\oint_{\Omega} \varphi_i \varphi_j \dd x$ the mass matrix, with $\bD_{ii}=|C_i|=\oint_{\Omega}\varphi_\sigma \dd x$ the lumped diagonal matrix and with $\bI$ the identity matrix, i.\,e.,
\begin{align}
	\bU^{n,l,(k)}=&\bU^{n,l,(k-1)} - \bD^{-1} \left(\bM(\bU^{n,l,(k-1)}-\bU^{n,0}) + \Delta t\sum_{r=0}^M \theta_{r}^{l} \Phi(\bU^{n,r,(k-1)}) \right)\\
	=& \bU^{n,0} + (\bI - \bD^{-1}\bM)(\bU^{n,l,(k-1)}-\bU^{n,0}) -\Delta t \sum_{r=0}^M \theta_{r}^{l} \bD^{-1}\Phi(\bU^{n,r,(k-1)}) . \label{eq:updateRDDeCmatrix}
\end{align} 

\begin{remark}
With this approach, we can avoid a mass matrix inversion which is typically in a FE approach. However, we have 
to deal with further terms in the semidiscretization. This has to be taken into account when the energy/entropy 
production is discussed.
Nevertheless, it should be marked that  we have  now two different possibilities to apply the entropy correction terms \eqref{eq:system} or  \eqref{eq:system_second} in \eqref{eq:updateRDDeCmatrix}. First, we can use it  only in the pure space residual $\Phi_x(\bU^{n,r,(k-1)})$ resulting in 
a semidiscrete entropy conservative or dissipative RD scheme. 
Second, we apply it to the hole 
$$ (\bI - \bD^{-1}\bM)(\bU^{n,l,(k-1)}-\bU^{n,0}) -\Delta t \sum_{r=0}^M \theta_{r}^{l} \bD^{-1}\Phi(\bU^{n,r,(k-1)})$$
to obtain a fully discrete conservative scheme.  In \cite{abgrall2019reinterpretation}, 
first simulations were already been done considering this direction. The authors realized only small differences but a rather complex implementation strategy. Therefore, combining the correction approach with the relaxation framework should simplify 
the implementation. Because of this reasons, we apply the correction term \eqref{eq:system_second}  only in the semidiscrete setting  together with the proposed relaxation approach.\\
In the future, a numerical comparison between two  different  approaches will be given extending also the application to different FE based schemes. However, this is not the point of this current work. 
\end{remark}

\section{Deferred Correction Methods -- Connection to RK, SSP and Relaxation Technique}\label{sec_DeC}
In this section, we reinterpret DeC as a classical RK method and mention also the connection to SSP methods.
Therefore,  we focus again only on the simple ODE case \eqref{eq:initial_prob}.
 Further, we apply the relaxation technique to this interpretation demonstrating the 
desired properties. In the following sections we will compare  the RDeC  to the classical relaxed SSPRK methods numerically both for ODEs and PDEs. 

\subsection{The Connection between DeC and RK with possible Application to SSP}

As we have seen already in  \cref{ex:RK}, the first two DeC approaches can be directly interpreted as RK schemes where the 
quadrature weights $\theta$ and $\beta$ defined the coefficients of the Butcher tableau. However, this is not new at all. Already in 
\cite{christlieb2010integral} this embedding has been pointed out for the classical DeC approach resulting in 
a block structure matrix for $A$ with repeated coefficients. Here, we adapt this to the simplified version 
from \cite{abgrall2017dec}. As it has been presented in \cite{offner2020approximation,torlo2020hyperbolic},
DeC has the advantage that one does not specify the coefficients for every order of accuracy as usually necessary in classical RK methods. 
This is done automatically. 
On the other side, by rewriting a DeC method as a RK scheme, it
requires a number of stages equal to $K\times M=d\times(d-1)$, which is bigger than classical RK stages.
Nevertheless, in our case we can even delete additional stages since the intermedia values  of  the last correction step are not needed anymore due to the simplification from \cite{abgrall2017dec}, cf. \cref{ex:RK}.
 Indeed, the $\LL^1$ operator is always starting from $y^0$, which is different to 
the DeC method described in \cite{dutt2000dec,liu2008strong}. 
 Therefore, we get $d\times (d-1)-(d-2)=(d-1)^2+1$ total stages. \\
Moreover, one can notice that every subtimestep is independent of another, so one can compute sequentially the corrections and in parallel the subtimesteps, 
obtaining a computational cost of just $K=d$ corrections. 
Let us consider our version of DeC and rewrite it in a 
Butcher tableau
 \begin{equation*}
\label{eq:butcher}
\begin{array}{c | c}
  c& A
  \\ \hline
    & b
\end{array}.
\end{equation*}
Here, $A$ is block diagonal after the first column. The column includes the explicit Euler steps 
form the $y^0$ to $y^i$, i.e., the $\beta^i$ coefficients with $\beta^0=0$. Next, the $\theta_r^m$ are written 
in a matrix form where the superscript $m$ describes the rows and $r$ the columns. We 
have $\mat{\theta}=(\theta_r^m)$ and we split the matrix using $\mat{\theta}=(\vec{\theta}_0|\mat{\tilde{\theta}})$ ,
where $\vec{\theta}_0=(\theta_0^1, \theta_0^2, \cdots, \theta_0^M)^T$ represents the first  column vector and
$\mat{\tilde{\theta}}$ the remaining matrix. The first column of the Butcher tableau then includes   $\vec{\theta}_0$ 
repeatedly to the number of corrections steps minus one before the final step is reached.  
The remaining matrix 
is written on the right side of it but in each correction step it is shifted to the right depending on the used number of 
subinterval. The open positions are filled with zeros and this is done until the last correction.
Here, the correction does not affect the intermediate  values and has to
be evaluated only at the last one. This means that it defines $b$ in the Butcher tableau  
and $A$ is not affected anymore. We obtain $b_1= \theta_0^M$ and all 
of the following $b_i$ values are zero with respect to the blocks of $A$. In connection,
we include $\theta_r^{M}$ with $r \in 1, \dots, M, $  denoting the vector  $ \vec{\theta}_r^{M,T} =(\theta_1^M, \dots, \theta_M^M )$.
The Butcher tableau for an arbitrary DeC approach is given by:
\begin{equation}\label{eq:DeC_RK}
\begin{aligned}
  \begin{array}{c|cccccccc}
  0 & 0 &   & &  & & &  & \\
 \vec{\beta} & \vec{\beta} &  &   & & & & &  \\
 \vec{\beta}  & \vec{\theta}_0 &   \mat{\tilde{\theta}} & & & & & &\\
 \vdots & \vec{\theta}_0 & \mat{0}  &\mat{\tilde{\theta}}   && & & &\\
  \vdots & \vec{\theta}_0 &  \mat{0}   &   \mat{0}  & \mat{\tilde{\theta}}  &  &  & &\\
    \vdots &  \vdots  &  \vdots &  \vdots &  \ddots  &  \ddots& & & \\
     \vec{\beta} & \vec{\theta}_0 &  \mat{0}  &  \hdots &  \hdots & \mat{0}  &  \mat{\tilde{\theta}} & \\
  \hline
  &  \theta_0^M & \vec{0}^T    & \hdots  &   &   \hdots &    \vec{0}^T& \vec{\theta}_r^{M,T} 
  \end{array}.
\end{aligned}
\end{equation}
A comparison to the presented example  \ref{ex:RK} demonstrated that the interpretations coincide. Please note that this description 
is from \cite{offner2020approximation} where the different $y^{0}$ values have been set directly equal to the starting point. In \cite{torlo2020hyperbolic}, these values have been considered inside the RK methods yielding to a slight different Butcher tableau  which is equivalent to the presented one \eqref{eq:DeC_RK}. \\
From the Butcher representation, we can easily obtain the Shu-Osher formulation \eqref{eq:Shu-Osher} using for example the 
provided Matlab scripts\footnote{\href{http://www.cfm.brown.edu/people/sg/SSPpage/sspsite/matlab_scripts.html}{http://www.cfm.brown.edu/people/sg/SSPpage/sspsite/matlab\_scripts.html}}. However, 
due to the special structure of the Butcher tableau, the coefficients $[\alpha, \beta]$ from  \eqref{eq:Shu-Osher}  can be easily read directly from \eqref{eq:DeC_RK}.  $\alpha, \beta$ are two matrices with one column more than the number of stages. They belong to $\R^{(s+1)\times s}$ with $s=(d-1)^2+1$. $\alpha$ contains zeros everywhere except in the first column starting with the second row where we have always ones in the entries and $\beta$ is given by $\beta= \left[ \begin{pmatrix} A\\b
\end{pmatrix}  \right]$ with $A, b$ from \eqref{eq:DeC_RK}. From the Shu-Osher formulation the usual SSP analysis can be done as presented in  \cite{liu2008strong} for the classical DeC in combination with an DG scheme. Please be aware that in case of negative coefficients in the formula, one has to change the wind directions in the spirit of  \cite{liu2008strong}. This is also possible using the RD framework. It is currently work in progress and not topic of this paper. However, this part should clarify  the connection between DeC and classical RK schemes. Due to the close connection between the modern ADER approach  and DeC \cite{veiga2020dec}, this will also open the theoretical investigations of ADER.  Next, we focus on the relaxation approach.

\subsection{Relaxation DeC}

 Due to the RK interpretation, all the theoretical fundings of
  \cite{ketcheson2019relaxation,ranocha2020relaxation,ranocha2020general}  
 will transfer directly to the DeC approach if one uses DeC to solve ODEs or as a time-integration scheme in simple splitting through the methods of lines.  Therefore, for such problems, we do not demonstrate new results in this part, but a rather different technique to prove it using the DeC framework. This should lead to a better understanding for the relaxation approach in the following. Nevertheless, it is still important that we can adapt the relaxation to our simplified DeC framework, due to the fact that  we want to apply it later together with the RD methods. It is important for the future analysis in Section  \ref{sec_DeC_RD}.
 Here, we need actually our proven new results to guarantee that the RDeC-RD methods remain high order accurate. 
 Again, a classical splitting technique through the method of lines has to be avoided for RD methods not to violate the high order property. \\
To adapt the relaxation approach, we have to change  the $\LL^2$ operator. For the step $y^M$ and the last correction, we multiply $\Delta t $ by $\gamma_n$ from \eqref{eq:Relaxation_RK}. This is 
a simple projection, cf. \cite{ranocha2020general}. Again, $\gamma_n$ can  be calculated using the expression \eqref{eq:DeC_RK} with  \eqref{eq:Relaxation_RK} in the ODE and simple PDE case, but for completeness reasons, we demonstrate that 
the change of the $\LL^2$ operator does not destroy the accuracy properties of the RDeC scheme.

\subsubsection*{Accuracy Property} 
 We denote in the following the relaxation 
$\L^2$ operator with $\L^2_{\gamma_n}$. Before, we start we like to point out that one may think 
that $y^{n+1}_{\gamma}$
approximates $y(t^n+\Delta t)$ or $y(t^n+\gamma_n\Delta t)$.
As it is formulated in \cite{ketcheson2019relaxation}, the first approach corresponds to the 
to the \emph{incremental direction technique} (IDT)
and it was proven that the scheme yield to a $d-1$ order method if the corresponding 
RK method is off order $d$. This is also the case using the DeC approach. 
In this paper, we focus on the second interpretation (i.e. $y^{n+1}_{\gamma} \approx y(t^n+\gamma_n\Delta t)$)
which gives a $d$ order method. \\
We will mimic the original proof from 
\cite{abgrall2017dec}.
Here, the term $y^*$ is applied and denotes the solution of the classical $\L^2$ operator
i.e.  $\L^2(y^*)=0$. 
Using the fact that $\gamma_n=1+\Ol(\Delta t^{d-1})$ stated in  \cite[Lemma 3]{ketcheson2019relaxation},
we obtain 
 \begin{equation}\label{eq:L_2_anpass}
     \L^2_{\gamma_n} (y^{*})  = \L^2(y^{*}) +\Ol(\Delta t^{d})=\Ol(\Delta t^{d}),
 \end{equation}
where we get another order from the time integration.
% Following now Abgrall, we end up with a modified DeC version 
% which has only $d-1$-th order. It correspons to IDT
% and we have intrerpreted that 
%  that $y^{n+1}_{\gamma}$ approximates $y(t^n+\Delta t)$ in this case.
% Hence, we select in the following that $y^*$ is the solution 
% of $L^2_\gamma$. 

% To proe that the solution of the RDeC procedure is high order accurate,
% we mimic the proof of the original 
% DeC given in \cite{abgrall2017dec}. Here,  $y^*$ denotes the solution of the $\L^2$ operator,
% i.e. $\L^2(y^*)=0$.
% If we assume that $\gamma_n=1+\Ol(\Delta t^{d-1})$ then we obtain that the solution for the 
% relaxation 
% $\L^2_{\gamma_n}$ operator is $\L^2_{\gamma_n}(y^*)=\Ol(\Delta t^{M})$.
% Here, in abuse of notation and to shorten the investigation we denote with $\L^2$ the classical 
% operator as it is described in \eqref{L2} whereas $\L^2_{\gamma_n}$
% is the $\L^2$ operator where in the last correction step $M$ we multiply $\gamma_n$. 
% Like in \cite{ketcheson2019relaxation}, one can interpret 
% that using $y^*$ as a solution for $\L^2$ that $y^{n+1}_{\gamma_n}$
% is an approximation to $y(t^n+\Delta t)$. \\
% Instead, we  fix $y^*$ as the solution for $\L^2_{\gamma_n}$ than 
% $y^{n+1}_{\gamma_n}$ is an approximation to $y(t^n+\gamma_n\Delta t)$ and
% to shorten the notation we neglect the index $\gamma_n$ at $y$ in the
% following investigation\footnote{This different
% interpretation correspond to the discussion as presented in \cite[Theorem 3]{ketcheson2019relaxation}
% We concentrate only on the second interpretation which are related to RRK.}.\\
 We  prove that 
\begin{equation}\label{eq:Dex_ineq}
\begin{aligned}
 ||y^{(M)}-y^*|| &\leq C_0 ||\L^1 (y^{(M)})-\L^1(y^*)||
= C_0||\L^1 (y^{(M-1)})-\L^2_{\gamma_n}(y^{(M-1)})-\L^1(y^*)+\L^2_{\gamma_n}(y^*) || \\
&\leq C^{M}\Delta t^{M}||y_0-y^*||,
\end{aligned}
\end{equation}
since in every iteration step, we obtain one order of accuracy more.
To prove that the inequalities in \eqref{eq:Dex_ineq} are valid, we have to ensure the 
following conditions:
\begin{enumerate}
 \item the coercivity of the operator $\L^1$ 
 \item the high order accuracy of the operators $\L^2_{\gamma_n}$.
  \item the Lipschitz inequality for operator $\L^1-\L^2_{\gamma_n}$ 
\end{enumerate}
Since the $\L^1$ operator is not changed, the coercivity follows directly from the 
analysis in  \cite{abgrall2017dec,offner2020arbitrary}. 
The classical  $\L^2_{\cdot}$ operators 
represent an implicit high-order scheme. Using the relaxation approach does not change this property
due the analysis from \cite{ketcheson2019relaxation}, cf. equation \eqref{eq:L_2_anpass}. The operator $\L_{\gamma_n}^2$ 
represents also a high order accurate time integration scheme. 
% However, we repeat the coercivity proof from \cite[Lemma 4.6]{offner2019arbitrary}:
% \begin{lemma}[Coervicity of $\L^1$]\label{le:coervicity}
%  Given any $y^{(m)},\;y^*$, there exists a positive $C_0$ such that the operator $\L^1$ verifies 
%  \begin{equation}\label{eq:coervicity}
%   ||\L^2(y^{(m)})-\L^1(y^*)||\geq C_0 ||y^{(m)}-y^*||.
%  \end{equation}
% \end{lemma}
% \begin{proof}
%  At the beginning states all the variables in the correction coincide, i.e. $y^{(m)}_0=y^*_0=y_0$.
%  So the evolution part simplifies in the two operators and we obtain the following relation
%   \begin{equation*}
%   ||\L^1(y^{(m)})-\L^1(y^*)||=  (y^{(m)}-y^*).
% \end{equation*}
% This proves that with constant $C_0=1$ the equation \eqref{eq:coervicity} holds. 
% \end{proof}
% Now, we demonstrate the Lipschitz continuity of the $\L^1-\L^2_{\gamma_n}$ operator. The proof 
% for $\L^1-\L^2 $ is similar to $\L^1-\L^2_{\gamma_n}$ and can also be found in \cite{abgrall2017dec}. 
Therefore, we have only to consider the Lipschitz continuity of $\L^1-\L^2_{\gamma_n}$.
\begin{theorem}[Lipschitz continuity of $\L^1-\L^2_{\gamma_n}$]\label{Liptschitz}
We consider a DeC method of order $d$ and assume further that 
\begin{equation}\label{eq:assuming}
\gamma_n=1+\Ol(\Delta t^{d-1})
\end{equation}
is fulfilled. Let  $y^{(k)},\,y^{(k-1)},\,y^* $ be given as above described. 
Then, the operator $ \L^1-\L^2_{\gamma_n}$
is Lipschitz continuous with constant $\Delta tC_L$, i.e.
\begin{equation}\label{eq:Liptschitz}
 ||\L^1 (y^{(k)})-\L^2_{\gamma_n}(y^{(k)})-\L^1(y^*)+\L^2_{\gamma_n}(y^*) || 
\leq C_L \Delta t ||y^{(m)}-y^*||
\end{equation}
\end{theorem}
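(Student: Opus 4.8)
The plan is to reduce everything to the Lipschitz continuity of $f$ (with constant $L$, say) together with the already-known estimate for the non-relaxed DeC operator from \cite{abgrall2017dec}. Write the combined operator as $\L^1-\L^2_{\gamma_n}=(\L^1-\L^2)+(\L^2-\L^2_{\gamma_n})$ and bound each summand separately. Throughout, recall that the first subtimestep value is fixed, $y^{0,(k)}=y^{0,*}=y(t^n)$, and measure stacked states in the norm $\norm{\bby}:=\max_{0\le m\le M}\norm{y^m}$.

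For the first summand, the $m$-th line of $\L^1$ is $y^m-y^0-\beta^m\Delta t\,f(y^0)$ and that of $\L^2$ is $y^m-y^0-\Delta t\sum_{r=0}^M\theta_r^m f(y^r)$; using $\sum_{r=0}^M\theta_r^m=\beta^m$ one obtains immediately
\[
(\L^1-\L^2)(\bby)=\Big(\Delta t\sum_{r=0}^M\theta_r^m\big(f(y^r)-f(y^0)\big)\Big)_{m=1,\dots,M}.
\]
Subtracting this at $y^{(k)}$ and at $y^*$, the contributions of $f(y^0)$ cancel (the starting values agree), and the Lipschitz bound on $f$ gives $\norm{(\L^1-\L^2)(y^{(k)})-(\L^1-\L^2)(y^*)}\le \Theta L\,\Delta t\,\norm{y^{(k)}-y^*}$ with $\Theta:=\max_m\sum_{r=0}^M|\theta_r^m|$. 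This is precisely the Lipschitz step in the original DeC analysis of \cite{abgrall2017dec} and may be cited directly.

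For the second summand, note that $\L^2_{\gamma_n}$ agrees with $\L^2$ except in the last line, where $\Delta t$ is replaced by $\gamma_n\Delta t$; hence $(\L^2-\L^2_{\gamma_n})(\bby)$ vanishes in all components except the $M$-th, where it equals $(\gamma_n-1)\Delta t\sum_{r=0}^M\theta_r^M f(y^r)$. Forming the difference at $y^{(k)}$ and $y^*$ and using Lipschitz continuity of $f$ together with the standing assumption \eqref{eq:assuming}, which in particular bounds $|\gamma_n-1|\le\tilde C$ for $\Delta t$ small, yields $\norm{(\L^2-\L^2_{\gamma_n})(y^{(k)})-(\L^2-\L^2_{\gamma_n})(y^*)}\le \tilde C\,\Theta L\,\Delta t\,\norm{y^{(k)}-y^*}$. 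Adding the two estimates by the triangle inequality proves \eqref{eq:Liptschitz}, e.g.\ with $C_L:=(1+\tilde C)\,\Theta L$.

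The computations are routine; the one point needing care is the status of $\gamma_n$, which in the actual scheme is determined by the stage values and hence is solution-dependent. As in \cite{abgrall2017dec,ketcheson2019relaxation} we treat $\L^2_{\gamma_n}$ as carrying a fixed scalar $\gamma_n$ satisfying \eqref{eq:assuming}, so that the same $\gamma_n$ enters $\L^2_{\gamma_n}(y^{(k)})$ and $\L^2_{\gamma_n}(y^*)$ and the argument above closes; a fully rigorous treatment of the variation of $\gamma_n$ between the two arguments would only add a further term of size $\Ol(\Delta t^{d-1})$, harmless for \eqref{eq:Liptschitz}. Together with the trivial coercivity of $\L^1$ (its difference reduces to $y^{(k)}-y^*$) and the accuracy \eqref{eq:L_2_anpass} of $\L^2_{\gamma_n}$, this completes the accuracy argument \eqref{eq:Dex_ineq}.
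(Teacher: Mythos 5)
Your proof is correct and follows essentially the same route as the paper's: both reduce the non-final corrections to the classical DeC Lipschitz estimate from \cite{abgrall2017dec} and isolate the effect of $\gamma_n$ in the last line, controlling it via $\gamma_n=1+\Ol(\Delta t^{d-1})$ and the Lipschitz continuity of $f$. If anything, your handling of the relaxation term is slightly cleaner: by writing the perturbation of the difference as $(\gamma_n-1)\,\Delta t\sum_{r}\theta_r^M\bigl(f(y^{r,(k)})-f(y^{r,*})\bigr)$ you obtain a bound genuinely proportional to $\|y^{(k)}-y^*\|$, whereas the paper replaces $\L^2_{\gamma_n}$ by $\L^2$ at the cost of a dangling additive $\Ol(\Delta t^{d})$ remainder that does not literally fit the claimed Lipschitz form \eqref{eq:Liptschitz}.
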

\begin{proof}
The operator $\L^2_{\gamma}$ intersect  in every correction step
with the classical $\L^2$ operator except in the last correction.
Therefore, the Lipschitz continuity follows directly for 
every correction until the final one from the investigation done
in \cite{abgrall2017dec}. We have to show 
the Lipschitz continuity for the last correction step.
Through the assumption  \eqref{eq:assuming} 
and substitute $\L^2_{\gamma_n}$ with the original $\L^2$ operator we add 
an error of order $\Ol(\Delta t^d)$ to it.
It is 
\begin{equation*}
||\L^1 (y^{(K-1)})-\L^2_{\gamma_n}(y^{(K-1)})-\L^1(y^*)+\L^2_{\gamma_n}(y^*) || 
\leq ||\L^1 (y^{(K-1)})-\L^2(y^{(K-1)})-\L^1(y^*)+\L^2(y^*) || +\Ol(\Delta t^{d})
\end{equation*}
 We 
can concentrate on the single equation using the notation $\L^{2,l}$.
The difference is given by 
\begin{equation}
\begin{split}
&|\L^{2,l}(y^{(K-1)})-\L^{1,l}(y^{(K-1)})| =
\left|\Delta t\left( \sum_{r=0}^M \theta_{r}^lf(y^{r,(K-1)})
-  \sum_{r=0}^M \beta_r^{l} f(y^{r,(K-1)})\right) \right|
=\left|\Delta t \sum_{r=0}^M \left(\theta_{r}^l
-  \beta_r^{l}\right) f(y^{r,(K-1)}) \right|
\end{split}
\end{equation}
Here, we have used the coefficients $\beta_r^{l}$, which are an extension 
of the previously defined $\beta^l $ coefficients and they are defined 
as 
\begin{equation*}
 \beta_r^l:=\begin{cases}
           \beta^l   \text{ for } r=0,\\
           0      \text{ for } r=1,\cdots, M.
          \end{cases}
\end{equation*}
Now, we can compute the differences of the two terms
\begin{equation*}
\begin{aligned}
  |\L^1_l (y^{(K-1)})-\L^2_l(y^{(K-1)})-\L^1_l(y^*)+\L^2_l(y^*) |
 \leq& \left|\Delta t \sum_{r=0}^M \left(\theta_{r}^l
-  \beta_r^{l}\right)\left(f(y^{r,(K-1)}-f(y^{r, *}) \right) \right|\\
\leq& \Delta t C_L || y^{(K-1)} -y^*||
\end{aligned}
\end{equation*}
where we used with an abuse of notation in the last step the Lipschitz continuity of $f$.
\end{proof}

With the different interpretations of the RDeC approach, 
we can formulate the following theorem:
\begin{theorem}[Convergence of RDeC]\label{th:final_teo}
Let $f$ sufficiently smooth and we consider a DeC approach of order $d>1$. 
Let $\L^1(\cdot)$ and $\L^2_{\gamma_n}(\cdot)$
are the defined operators respectively.
The RDeC procedure with the relaxation term  \eqref{eq:Relaxation_RK}  gives a numerical approximation 
 with order of accuracy equal to $d$. 
\end{theorem}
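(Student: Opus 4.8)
The plan is to obtain \cref{th:final_teo} as a direct assembly of the three ingredients already prepared, exactly mirroring the convergence proof of the unrelaxed DeC in \cite{abgrall2017dec}: the telescoping estimate \eqref{eq:Dex_ineq} is valid as soon as (i) $\L^1$ is coercive, (ii) $\L^2_{\gamma_n}$ is $\Ol(\Delta t^d)$-consistent, and (iii) $\L^1-\L^2_{\gamma_n}$ is Lipschitz with constant $\propto\Delta t$. Condition (i) is untouched, since $\L^1$ is unchanged by relaxation, and carries over verbatim from \cite{abgrall2017dec,offner2020arbitrary}; condition (ii) is \eqref{eq:L_2_anpass}, itself a consequence of $\gamma_n=1+\Ol(\Delta t^{d-1})$ (\cite[Lemma 3]{ketcheson2019relaxation}, with $\gamma_n>0$ for small $\Delta t$ by \cref{le:RK_R}, applicable because $d>1$ makes $\sum_{i,j}b_ia_{ij}=\tfrac12>0$); condition (iii) is \cref{Liptschitz}.

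With these in hand I would run the telescoping. Let $\bby^*$ solve $\L^2(\bby^*)=0$. Coercivity gives $\norm{\bby^{(k)}-\bby^*}\le C_0\norm{\L^1(\bby^{(k)})-\L^1(\bby^*)}$; inserting the DeC recursion $\L^1(\bby^{(k)})=\L^1(\bby^{(k-1)})-\L^2_{\gamma_n}(\bby^{(k-1)})$, adding and subtracting $\L^2_{\gamma_n}(\bby^*)$, and applying (ii) and (iii), one gets
\[
 \norm{\bby^{(k)}-\bby^*}\;\le\; C_0C_L\,\Delta t\,\norm{\bby^{(k-1)}-\bby^*}\;+\;C_0\,\Ol(\Delta t^{d}).
\]
Since $\bby^{(0)}=(y^n,\dots,y^n)$ is $\Ol(\Delta t)$-consistent, and since the first $M=d-1$ correction steps actually coincide with the unrelaxed DeC (so are floor-free contractions), unrolling over those steps yields $\norm{\bby^{(M)}-\bby^*}=\Ol(\Delta t^{d})$, i.e.\ the iterate $\bby^{(M)}$ reproduces the classical $\L^2$-solution to that order.

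It then remains to read off an order statement for $y^{n+1}_{\gamma_n}$. The last (relaxed) correction computes $y^{n+1}_{\gamma_n}=y^n+\gamma_n\Delta t\sum_{r=0}^M\theta_r^M f(y^{r,(M)})$; using $y^{r,(M)}=y^{*,r}+\Ol(\Delta t^d)$, Lipschitz continuity of $f$, and $y^{*,M}=y^n+\Delta t\sum_r\theta_r^M f(y^{*,r})$, one finds $y^{n+1}_{\gamma_n}=(1-\gamma_n)\,y^n+\gamma_n\,y^{*,M}+\Ol(\Delta t^{d+1})$, an affine combination of $y^n$ and the unrelaxed DeC end value. Because $\bby^*$ solves the order-$d$ scheme $\L^2=0$, $y^{*,M}=y(t^{n+1})+\Ol(\Delta t^{d+1})$, and a Taylor expansion around $t^n$ gives $(1-\gamma_n)\,y(t^n)+\gamma_n\,y(t^{n+1})=y(t^n+\gamma_n\Delta t)+\gamma_n(1-\gamma_n)\tfrac{\Delta t^2}{2}y''(t^n)+\cdots=y(t^n+\gamma_n\Delta t)+\Ol(\Delta t^{d+1})$ by \eqref{eq:assuming}. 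Hence $y^{n+1}_{\gamma_n}=y(t^n+\gamma_n\Delta t)+\Ol(\Delta t^{d+1})$: a local error of order $d+1$, i.e.\ a globally $d$-th order scheme, which is the claim.

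The genuinely new point, and the one I expect to be the main obstacle, is precisely this re-centering of the error. Because $\L^2_{\gamma_n}(\bby^*)$ no longer vanishes but is only $\Ol(\Delta t^d)$, the raw estimate says merely that $y^{n+1}_{\gamma_n}$ approximates $y(t^{n+1})$ to order $d$, which taken at face value would give global order $d-1$; the extra order is recovered only by measuring against the shifted time $y(t^n+\gamma_n\Delta t)$, where the seemingly $\Ol(\Delta t^d)$ discrepancy cancels down to $\Ol(\Delta t^{d+1})$ thanks to $1-\gamma_n=\Ol(\Delta t^{d-1})$. Making this cancellation rigorous is exactly the projection/accuracy statement behind \cite{ketcheson2019relaxation,ranocha2020general}; everything else is a line-by-line transcription of the unrelaxed DeC proof in \cite{abgrall2017dec}.
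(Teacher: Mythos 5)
Your proposal is correct and follows essentially the same route as the paper: coercivity of $\L^1$, the $\Ol(\Delta t^d)$ accuracy of $\L^2_{\gamma_n}$ from \eqref{eq:L_2_anpass}, and the Lipschitz estimate of \cref{Liptschitz} are assembled into the telescoping bound \eqref{eq:Dex_ineq}, followed by a triangle inequality against $y^*$. You are in fact somewhat more explicit than the paper on the one delicate point — tracking the $\Ol(\Delta t^d)$ floor introduced by $\L^2_{\gamma_n}(y^*)\neq 0$ and recovering the extra order by re-centering the error at $y(t^n+\gamma_n\Delta t)$ — which the paper compresses into the phrase that $y^*$ is $d$-th order accurate ``depending how it is interpreted,'' deferring to \cite{ketcheson2019relaxation}.
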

\begin{proof}
With the coercivity of the $\L^1$ operator, the first inequality in \eqref{eq:Dex_ineq}
is verified. The definition of the RDeC scheme  gives us the equality 
 and the Lipschitz continuity Theorem \ref{Liptschitz} 
 proves the last inequality \eqref{eq:Dex_ineq}. 
Moreover, we know that $y^*$ is a  $d$-th accurate approximation of the $y^{ex}$ exact solution
depending how it is interpreted. 
So, overall, we have
\begin{equation}
||y^{(K)}-y^{ex}||\leq ||y^{*}-y^{ex}|| + (C\Delta t)^{d} ||y^{*}-y^{(0)}||\leq C^* 
\Delta t^{d} + C_L\Delta t^{d}
\end{equation}
which proves the result.
\end{proof}
Finally, we have to say that in 
\eqref{th:final_teo}, we have assumed 
that $\gamma_n=1+\Ol (\Delta t^{d-1})$ is fulfilled. 
However, this is not needed since as   presented in 
 \cite[Lemma 3]{ketcheson2019relaxation}:
 \begin{lemma}
 Let $a_{ij}, b_j$ denotes the coefficients of a RK method of order $d$, let 
 $f$ be a sufficiently smooth function, and let $\gamma_n$ be defined by 
  \eqref{eq:gamma_RK}. Then $\gamma_n=1+\Ol (\Delta t^{d-1})$.
 \end{lemma}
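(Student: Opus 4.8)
The plan is to read $\gamma_n$ off directly from the energy identity displayed just before \eqref{eq:Relaxation_RK} and to reduce the whole statement to showing that one scalar quantity is of size $\Ol(\Delta t^{d-1})$. Set $N := 2\sum_{j,i}a_{ij}b_j\scp{f_j}{f_i} - \sum_{i,j}b_ib_j\scp{f_i}{f_j}$ and $D := \sum_{i,j}b_ib_j\scp{f_i}{f_j} = \norm{\sum_i b_i f_i}^2$. Then the cited energy identity, evaluated with $y = y^n$, rearranges into the clean form
\begin{equation*}
\norm{y^{n+1}}^2 - \norm{y^{n}}^2 = 2\Delta t \sum_{j} b_j \scp{u_j}{f_j} - \Delta t^2 N ,
\end{equation*}
and the non-degenerate branch of \eqref{eq:gamma_RK} is exactly $\gamma_n = (N+D)/D = 1 + N/D$. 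So it is enough to prove $N = \Ol(\Delta t^{d-1})$ together with a lower bound on $D$ for small $\Delta t$.

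For the main step, $N = \Ol(\Delta t^{d-1})$, I would introduce the companion scalar equation $z'(t) = 2\scp{y(t)}{f(t,y(t))} = \tfrac{\dd}{\diff t}\norm{y(t)}^2$ and append it to \eqref{eq:initial_prob} with initial value $z(t^n) = \norm{y^n}^2$. The augmented right-hand side is as smooth as $f$, so the order-$d$ RK method, applied to the augmented system, has one-step error $\Ol(\Delta t^{d+1})$ in both components: $y^{n+1} = \phi(t^n+\Delta t) + \Ol(\Delta t^{d+1})$ and $z^{n+1} = \norm{\phi(t^n+\Delta t)}^2 + \Ol(\Delta t^{d+1})$, where $\phi$ is the exact solution through $y^n$ and $z(t) \equiv \norm{\phi(t)}^2$ because $\phi(t^n) = y^n$. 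Since the $z$-equation does not depend on $z$, the $y$-stages are unchanged and simply $z^{n+1} = \norm{y^n}^2 + 2\Delta t\sum_j b_j\scp{u_j}{f_j}$. Subtracting the two one-step estimates gives $z^{n+1} - \norm{y^{n+1}}^2 = \Ol(\Delta t^{d+1})$, whereas the reformulated energy identity gives $z^{n+1} - \norm{y^{n+1}}^2 = \Delta t^2 N$. Comparing the two, $\Delta t^2 N = \Ol(\Delta t^{d+1})$, i.e. $N = \Ol(\Delta t^{d-1})$.

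It then remains to control $D$. Since $u_i = y^n + \Ol(\Delta t)$ and $f$ is smooth, $f_i = f(t^n,y^n) + \Ol(\Delta t)$ for every stage, and consistency $\sum_i b_i = 1$ gives $\sum_i b_i f_i = f(t^n,y^n) + \Ol(\Delta t)$, hence $D = \norm{f(t^n,y^n)}^2 + \Ol(\Delta t)$. On the generic set $f(t^n,y^n)\neq 0$ this is bounded below by $\tfrac12\norm{f(t^n,y^n)}^2>0$ for $\Delta t$ small enough, so $\gamma_n - 1 = N/D = \Ol(\Delta t^{d-1})$, which is the claim. The degenerate case $D = 0$ is excluded by \eqref{eq:gamma_RK}, which sets $\gamma_n = 1$; the borderline regime $0<D\ll 1$ is absorbed by noting that $N$ and $D$ are built from the same quantities $\scp{f_i}{f_j}$, whose common leading term cancels in the ratio, so $\gamma_n-1$ does not blow up.

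The step I expect to be the real work is the middle one: making the companion-ODE argument fully rigorous — that the order-$d$ order conditions transfer verbatim to the augmented system, that the one-step error splits componentwise, and that the $z$-stages collapse to the displayed quadrature sum. The rest is elementary bookkeeping. An alternative to the companion ODE would be to expand $y^{n+1}$, the $u_i$ and the $f_i$ in powers of $\Delta t$ up to order $d+1$ and use the order conditions to kill the first $d-2$ Taylor coefficients of $N$; this, however, is messier and essentially re-proves the same fact, so I would keep the companion-ODE route and only fall back to expansions if a subtlety forces it.
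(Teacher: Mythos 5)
The paper does not actually prove this lemma: it is quoted verbatim from \cite[Lemma 3]{ketcheson2019relaxation}, with a pointer to an alternative proof in \cite{ranocha2020relaxation}. Your proposal therefore supplies an argument where the paper supplies only a citation, and the argument you give is essentially the mechanism underlying those cited proofs: writing $\gamma_n = 1 + N/D$ and showing $N = \Ol(\Delta t^{d-1})$ by observing that $2\Delta t\sum_j b_j\scp{u_j}{f_j}$ is the RK update for the augmented scalar variable $z' = 2\scp{y}{f(t,y)}$, so that it approximates the exact change of $\norm{y}^2$ to order $d+1$, while the energy identity says the actual change of $\norm{y^{n+1}}^2$ differs from it by exactly $\Delta t^2 N$. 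The algebra checks out ($N+D = 2\sum_{i,j}a_{ij}b_j\scp{f_j}{f_i}$, so the nondegenerate branch of \eqref{eq:gamma_RK} is indeed $1+N/D$), the $z$-component of the augmented system has unchanged $y$-stages because it does not feed back into $y$, and the order conditions are a property of the method, not of the problem, so the $\Ol(\Delta t^{d+1})$ one-step error transfers to the augmented system for smooth $f$. This is a clean and correct route.

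One caveat. The lower bound on $D$ genuinely requires $f(t^n,y^n)\neq 0$, and your closing remark that the regime $0<D\ll 1$ is ``absorbed'' because the leading terms of $N$ and $D$ cancel in the ratio is not a proof: from $N=\Ol(\Delta t^{d-1})$ and $D$ merely positive you cannot conclude $N/D=\Ol(\Delta t^{d-1})$ if $\norm{f(t^n,y^n)}$ is allowed to shrink with $\Delta t$. This is not a defect relative to the cited lemma, which is an asymptotic statement for fixed $y^n$ with $\Delta t\to 0$ (and whose companion, Lemma \ref{le:RK_R}, carries analogous nondegeneracy hypotheses), but you should either state $f(t^n,y^n)\neq 0$ as an explicit hypothesis or drop the borderline-regime sentence rather than leave it as an unsubstantiated claim.
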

 \begin{remark}
 An alternative proof of this results is shown in \cite{ranocha2020relaxation}. Since we can interpret DeC as a RK scheme, we know that 
this condition is always fulfilled.  
 \end{remark}

\subsubsection*{Extension to General Entropies} 
We have presented the relaxation approach up to now only for the energy
in the ODE case where we have assumed an energy conservative/dissipative system.
Those systems naturally develop form hyperbolic conservation/balance laws using energy (entropy) conservative/dissipative space discretizations, cf. \cite{abgrall2020analysis,chen2017entropy,glaubitz2020stable,ranocha2016summation} and references therein. 
One is not only interested in the energy behavior but on the behavior of general entropies, especially in the nonlinear case, e.g. Euler equations in gas dynamics. 
As presented in \cite{ranocha2020relaxation}, the relaxation approach  can easily adapted  to general entropy functions.
Again, we denote with $\eta$ the strictly convex entropy and with 
$g$ is associated entropy flux  for an hyperbolic conservation law \eqref{hyper}.
From the theory, we know that the entropy satisfies the additional hyperbolic equation 
$
  \partial_t \eta+\div g=0,
$
and we have the following relation to be fulfilled:  $g=\est{\eta',f}$.
The space discretisation is done by an semidiscrete entropy conservative/dissipative scheme  resulting in our ODE system and now the time-integration comes into play. We know that explicit time-intgration methods produces always entropy and the change of 
the entropy is given by 
$
  \partial_t \eta=  g_{se}=\est{\eta_{se}',f_{se}}
$
 where we use the index to clarify that the right side is a discretization. 
 Therefore, 
 the following nonlinear equation has to be solved 
$$
  \eta(y^{n+1})-\eta(y^n)=\Delta t \gamma_n \sum_{r=0}^M  \theta_r^M \est{\eta'_{se}(y^{r,(K)}), f_{r,se}},
$$
 where $ \theta_r^M$ denote the coefficients of the Butcher tableau corresponding to DeC and 
 $y^{r,(K)}$ are  the intermedia steps.
 Instead of searching the roots as for the energy, the following 
 equation has to be solved 
  \begin{equation}\label{eq:root_equation_general}
 r(\gamma_n) = \eta \left(y_0 + \Delta t \gamma_n \sum_{r=0}^M \theta_r^M \est{\eta'_{se}(y^{r,(K)}), f_{r,se}}\right) - \eta (y_0) -
\Delta t \gamma_n \sum_{r=0}^M \theta_r^M \est{\eta'_{se}(y^{r,(K)}), f_{r,se}} \stackrel{!}{=} 0.
 \end{equation}
 In the comparison simulations, we use analogously to the work \cite{ranocha2020relaxation}  the Brent algorithm or the Marquard-Levenberg variation, but also a simple Newton method can also  be applied to solve \eqref{eq:root_equation_general}.
% Finally, we like to mention that 
% for DeC(2) as presented before, 
% we can make use the following description  
% where the change is given by 
% \begin{equation*}
%  \eta(y^{n+1})-\eta(y^n)=\Delta t \gamma_n \sum_{l=0}^r \Theta_{l,r}\est{\eta'(y_l^{1}), f_l^{(1)})}
% \end{equation*}
% Instead of searching the roots we have to solve 
% %\begin{equation}\label{eq:root_equation_general}
% %  r(\gamma_n)= \eta(u_\sigma^{n+1}+\gamma_n \Delta t \sum_{l=0}^r \Theta_{l,r}f_l^k)-\eta(u^n_{\sigma}) -\gamma_n\Delta t \sum_{l=0}^r  \Theta_{l,r}\est{\eta'(u_{\sigma,l}^{M}), f_l^{(M)})} \stackrel{!}{=} 0
% %\end{equation}
% this mostly non-linear equation. 
% \begin{equation}\label{eq:root_equation_general}
% r(\gamma_n) = \eta \big(y_0 + \gamma_n \Delta t
% \sum_{l=0}^r \Theta_{l,r} f_l^{(1)} \big) - \eta (y_0) -
% \gamma_n \Delta t \sum_{l=0}^r \theta_{l,r} \est{ \eta'
% (y_l^{(1)}), f_l^{(1)}}\stackrel{!}{=} 0.
% \end{equation}
% 

\section{A Fully Entropy Conservative/Dissipative DeC-RD Approach  }\label{sec_DeC_RD}

In the following section, we will describe how we can combine the relaxationDeC approach together with the RD framework and how the relaxation parameter $\gamma$ is in this case calculated. 
%The problem in the RD framework is that a splitting between space and time discretization by the method of lines would destroy the accuracy properties of the high-order RD approach \cite{abgrall2012review}.
%Therefore, we apply the DeC approach  as described in Section \ref{sub:residual}
%together with the RD scheme to obtain a high order space-time FE approach. 
We had for DeC in the RD framework the following update formula for the final step \eqref{eq:updateRDDeCmatrix}, denoting with $\bU^l=\bU^{l,(K-1)}$, with $\bU^{end}=\bU^{n+1}$ and $\bU^0=\bU^n$,  we know that the last step will be 
\begin{equation}\label{eq:RDUpdate}
	\bU^{end} = \bU^0 + \Delta t \left \lbrace (\bI-\bD^{-1}\bM)\frac{\bU^M-\bU^0}{\Delta t} -   \sum_{r=0}^M \theta^M_r \bD^{-1}\Phi_{x}(\bU^r)  
	\right\rbrace,
\end{equation}
We assume further that due to the usage of the entropy correction term \eqref{rdScheme} (plus the addition of diffusion terms) our space residual is 
already entropy/energy conservative/dissipative i.e.,
\begin{equation}\label{eq:entropyCellInequality}
 \sum_{K|\sigma \in K} |C_\sigma|^{-1}\est{V_\sigma(\bU^r), \Phi_{\sigma,x}(\bU^r) } \stackrel{(\geq)}{=} \oint_{\partial K} g(\bU^r)\cdot \bn\, \dd \Gamma,
\end{equation}
where $V_\sigma$ are the entropy variables, $g$ is the entropy flux and  $\Phi_{x}$ denotes the space residual. 
We remark that we consider continuous RD distribution approximation. One can substitute in the previous equation the entropy flux $g$ with a numerical flux $g^{num}(\cdot, \cdot)$ in case of discontinuous approximations.

Now, we want to apply the relaxation approach to obtain a fully discrete entropy conservative/dissipative scheme. Here, we have to make slight modifications.\\
Actually, the main idea of the relaxation approach is to decrease the update time-step with respect to the entropy production of the fully discrete scheme. \\
In the RD framework, we cannot simple apply this term since by focusing on \eqref{eq:RDUpdate}, we realize  that we have  additional terms given by the lumping of the mass matrix in $\L^1$. The sign of the entropy contribution of this term is unknown.

With an abuse of notation, we define a scalar product onto the whole space $\VV$ in the following way,
\begin{equation}
	\est{\bU,\bW}:= \sum_{K}\sum_{\sigma\in K}\int_K \est{U_\sigma, W_\sigma} \varphi_\sigma(x)dx =  \sum_{\sigma \in \Omega} |C_\sigma|  \est{U_\sigma, W_\sigma}
\end{equation}
for consistency reasons. Indeed, summing up all the cells $K$ in \eqref{eq:entropyCellInequality} we obtain the inequality
\begin{equation}
	\est{\bV(\bU^r),\bD^{-1}\Phi_x(\bU^r)} \stackrel{(\geq)}{=}\oint_{\partial \Omega} g(\bU^r)\cdot \bn\, \dd\Gamma.
\end{equation}

Let us develop the equation for entropy that we would like to preserve for the relaxed value
\begin{equation}
	\bU_{\gamma_n}^{end}:= \bU^0 + {\gamma_n} \Delta \bU,
\end{equation}
defining 
\begin{equation}
	\Delta \bU := \Delta t \left \lbrace (\bI-\bD^{-1}\bM)\frac{\bU^M-\bU^0}{\Delta t} -   \sum_{r=0}^M \theta^M_r \bD^{-1}\Phi_{x}(\bU^r)  
	\right\rbrace.
\end{equation}

The entropy reads and can be expanded as
\begin{align}
	\eta(\bU^{end}_{\gamma_n}) &= \eta(\bU^0)+ {\gamma_n} \Delta t  \est{\eta'(\bU^0),\underbrace{ (\bI-\bD^{-1}\bM)}_{\mathcal{O}(\Delta x)}\underbrace{\frac{\bU^M-\bU^0}{\Delta t}}_{\mathcal{O}(1)} -  \sum_{r=0}^M \theta^M_r \bD^{-1}\Phi_{x}(\bU^r)  }+ \mathcal{O}(\Delta t^2) \\
	&= \eta(\bU^0)- {\gamma_n} \Delta t  \est{\bV(\bU^0),\bD^{-1} \left[ \sum_{r=0}^M \theta^M_r \Phi_{x}(\bU^r) \right]} + \mathcal{O}(\Delta t^2) \\
	&= \eta(\bU^0)- {\gamma_n} \Delta t \sum_{r=0}^M \theta^M_r \underbrace{\est{\bV(\bU^r),\bD^{-1}   \Phi_{x}(\bU^r) }}_{\stackrel{(\geq )}{=} -\int_{\partial\Omega} g (\bU^r)\cdot \bn \,\dd \Gamma  } + \mathcal{O}(\Delta t^2).
\end{align}
We remind that for DeC with order less than 9 all the $\lbrace\theta^M_r\rbrace_r$ are positive, while for higher orders one has to stick to Gauss--Lobatto subtimestep distribution to have positive coefficients.
Motivated by this expansion, we impose that 
\begin{equation}\label{eq:gammaRelaxEquation}
	\begin{split}
		{\eta(\bU^{end}_{\gamma_n}) -\eta(\bU^0) +{\gamma_n} \Delta t \sum_{r=0}^M \theta^M_r \est{\bV(\bU^r),\bD^{-1}  \Phi_{x}(\bU^r) }} \stackrel{!}{=}0 ,
	\end{split}
\end{equation}
by solving this scalar equation for $\gamma_n$.
Then, we have that
\begin{equation}
	\begin{split}
		\eta(\bU^{end}_{\gamma_n}) =&\eta(\bU^0) -{\gamma_n} \Delta t \sum_{r=0}^M \theta^M_r \est{\bV(\bU^r),\bD^{-1}  \Phi_{x}(\bU^r) }\\
		 \stackrel{(\leq) }{=} &\eta(\bU^0) -  \gamma_n \Delta t \sum_{r=0}^M \theta^M_r \int_{\partial\Omega} g (\bU^r) \cdot \bn\, \dd \Gamma.
	\end{split}
\end{equation}
A priori, the scalar equation \eqref{eq:gammaRelaxEquation} that we want to solve is a nonlinear equation that we can solve, for instance, with a Newton method, i.e.,
\begin{equation}
	\eta(\bU^0+\gamma_n \Delta \bU ) -\eta(\bU^0) +{\gamma_n} \Delta t \sum_{r=0}^M \theta^M_r \est{\bV(\bU^r),\bD^{-1}  \Phi_{x}(\bU^r) } =0.
\end{equation}
In case of energy entropy, $\eta(\bU):= \frac{1}{2}\est{\bU,\bU}$ and $\bV(\bU)=\bU$, \eqref{eq:gammaRelaxEquation}  becomes
\begin{align}
	\frac{\est{\bU^{0},\bU^0}}{2} +\gamma_n\est{\bU^0,\Delta \bU} + \gamma_n^2 \est{\Delta \bU, \Delta \bU} -	\frac{\est{\bU^{0},\bU^0}}{2}+{\gamma_n}\Delta t \sum_{r=0}^M \theta^M_r \est{\bU^r,\bD^{-1}  \Phi_{x}(\bU^r) } =0,
\end{align}
hence, we can solve explicitly the equation for $\gamma_n$ with
\begin{equation}\label{eq:gammaEqEnergyRD}
	{\gamma_n}=-2 \frac{\Delta t\sum_{r=0}^M\est{\bU^r,\bD^{-1}\Phi_x(\bU^r)} +\est{\bU^0,\Delta \bU}}{\est{\Delta \bU, \Delta \bU}}.
\end{equation}

%\begin{comment}
%Moreover, in case of ODE, the previous equations are still valid, by deleting the spatial discretization and redefining $\Delta U=U^{end}-U^0$, one get the following equation for $\gamma_n$:
%\begin{equation}
%	\eta(U^0+\gamma_n \Delta U ) -\eta(U^0) +{\gamma_n} \Delta t \sum_{r=0}^M \theta^M_r \est{V(U^r),f(U^r) } =0.
%\end{equation}
%\end{comment}

\begin{remark}
	We recall that the entropy corrections presented above can either be dissipative, i.e.,  $$\est{\bV(\bU^r) ,\bD^{-1}\Phi_x(\bU^r)} > \oint_{\partial \Omega} g(\bU^r)\cdot \bn\, \dd\Gamma,$$ or conservative, i.e.,  $$\est{\bV(\bU^r),\bD^{-1}\Phi_x(\bU^r)} =\oint_{\partial \Omega} g(\bU^r)\cdot \bn\, \dd\Gamma.$$ This does not detect when the problem switches between an entropy conservative regime to an entropy dissipative regime, as often happens in hyperbolic problems. Nevertheless, we know that the advantages of the relaxation schemes are remarkable when we conserve the entropy, hence in the simulations for hyperbolic problems, we will focus on the entropy conservative case, where we will impose $\eta(\bU^0+\gamma_n \Delta \bU)=\eta(\bU^0 )$, solving directly, in the energy case,
	\begin{equation}\label{eq:gammaEqEnergyRDconservative}
		{\gamma_n}= \frac{-2\est{\bU^0,\Delta \bU}}{\est{\Delta \bU, \Delta \bU}}.
	\end{equation}
\end{remark}

\begin{remark}
In Appendix \ref{sec:PhilippWay} we perform similar computation weighting not the whole $\Delta \bU$ term, but only the $\Delta t$ which comes from the $\L^2$ formulation. This leads to a more complicated formulation, but anyway viable with the previously presented techniques.
\end{remark}

\section{Numerical Simulations}\label{se:numerics}

In this part, we validate our RDeC methods  and compare it also with the RRK method given and investigated in \cite{ketcheson2019relaxation,ranocha2020relaxation}. 
We focus on similar examples, first we apply the pure time integration RDeC\footnote{\href{https://git.math.uzh.ch/abgrall_group/relaxation-dec-code.git}{git.math.uzh.ch/abgrall\_group/relaxation-dec-code.git}} implementation, and we also extend the provided RRK code \cite{ketcheson2019relaxation,ketcheson2019_RRK_rr} with RDeC schemes (arbitrarily high order using Gauss-Lobatto and equidistant nodes) for the ODE case and simple PDEs. 
Finally, we apply the RDeC approach together with the semidiscrete entropy conservative / dissipative RD method resulting in a fully discrete, explicit, entropy conservative / dissipative  finite element based scheme. 
The $\theta$ nodes are calculated using 
$\theta_{r}^m= \int_{t^0}^{t^m} \varphi_r \mathrm{d}t$.
In the comparison part with RRK methods,  we restrict ourself to the SSPRK(2,2), 
the SSPRK(3,3) and finally the classical fourth order RK method with four stages (RK(4,4)).
As seen in  \cref{ex:RK} the second order DeC approach is equivalent to SSPRK(2,2) and the results coincide. 
Therefore, we renounce to plot both methods and apply the SSPRK(2,2) to describe both SSPRK(2,2) and DeC2. 
Additionally to the investigation in  \cite{ketcheson2019relaxation,ranocha2020relaxation}, 
we analyze also how the number of time steps changes among the different methods.  

%We start with the ODE case focusing on the nonlinear oscillator, and the nonlinear pendulum. 

\subsection{Numerical test in the ODE case}
In this section we support our theoretical findings and validate  the ideas for systems of ODEs
\begin{equation}
	\partial_t U +F(U)=0, \quad t\in[0,T],
\end{equation}
where $U(t)\in \R^D$ and $F:\R^D\to\R^D$ is a Lipschitz continuous function. In this context, the residual $\Phi_x$ in  \cref{sec_DeC_RD} become simply the function $F$ and the mass matrix and the diagonal matrix are trivially the identity matrix.

\subsubsection{Nonlinear Oscillator}
The first problem is the nonlinear oscillator  described in  \cite{ketcheson2019relaxation}
through the system
%$
%\begin{bmatrix}
%u_1 \\
%u_2
%\end{bmatrix}' = \frac{1}{||u||^2} \begin{bmatrix}
%-u_2 \\
%u_1
%\end{bmatrix}
%$
%with the initial condition
%$
%\begin{bmatrix}
%u_1(0) \\
%u_2(0)
%\end{bmatrix} = 
%\begin{bmatrix}
%1 \\
%0
%\end{bmatrix}.
%$
%The analytical solution  is given by 
%$
%\begin{bmatrix}
%u_1(t) \\
%u_2(t)
%\end{bmatrix} = 
%\begin{bmatrix}
%\cos(t) \\
%\sin(t)
%\end{bmatrix}.
%$
\begin{equation}
	\begin{cases}
	\partial_t \begin{pmatrix}
		u_1\\u_2
	\end{pmatrix}= \begin{pmatrix}
	\frac{-u_2}{n} \\\frac{u_1}{n}
\end{pmatrix}, \qquad \text{ with } n:=\sqrt{u_1^2+u_2^2},\\
		u_1(0) = u_1^0,	\\ u_2(0) = u_2^0.
	\end{cases}
\end{equation}
The system verifies the exact solution
\begin{equation}
	\begin{pmatrix}
		u_1(t)\\u_2(t)
	\end{pmatrix} = \begin{pmatrix}
	\cos(\theta) & -\sin(\theta)\\
	\sin(\theta) & \cos(\theta)
\end{pmatrix}\begin{pmatrix}
u_1^0\\u_2^0
\end{pmatrix},  \quad \theta(t):=\frac{t}{n}.
\end{equation}
We consider the energy ($L^2$-entropy) $\eta(u) = \frac{n^2}{2} $. In our tests, we took $\Delta t = 0.9$ and $T=1000$. In  \cref{fig:linosc}, we plot the progression of the energy. 
All the schemes gain energy over time when not using the relaxation term but the DeC schemes of order three and four (with equidistant points (DeCEq) and with Gauss-Lobatto points (DeCLo))  behave  better compared to the classical RK methods of the same order. 
\begin{figure}[h!]
\centering
  \includegraphics[width=\textwidth,trim={0 130 0 140} , clip ]{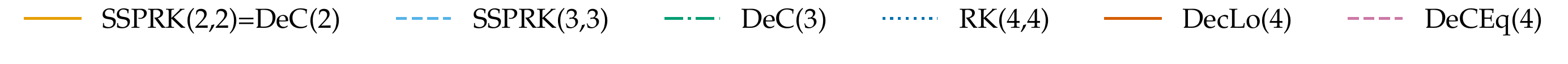}
  \begin{subfigure}{0.49\textwidth}
    \centering
    \includegraphics[width=\textwidth]{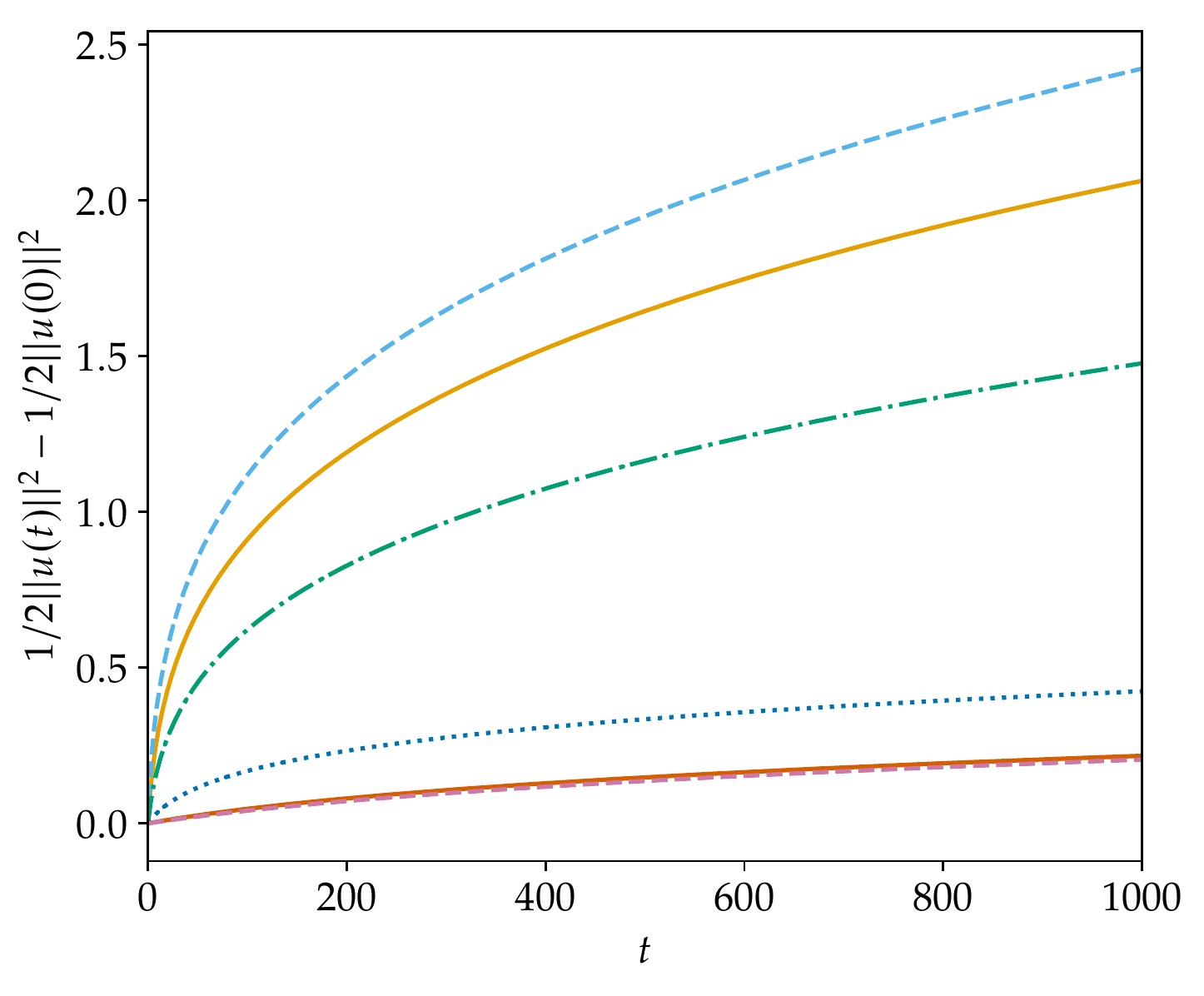}
    \caption{Square entropy progress without modification with $\Delta t = 0.9$}
  \end{subfigure} %
  \begin{subfigure}{0.49\textwidth}
    \centering
    \includegraphics[width=\textwidth]{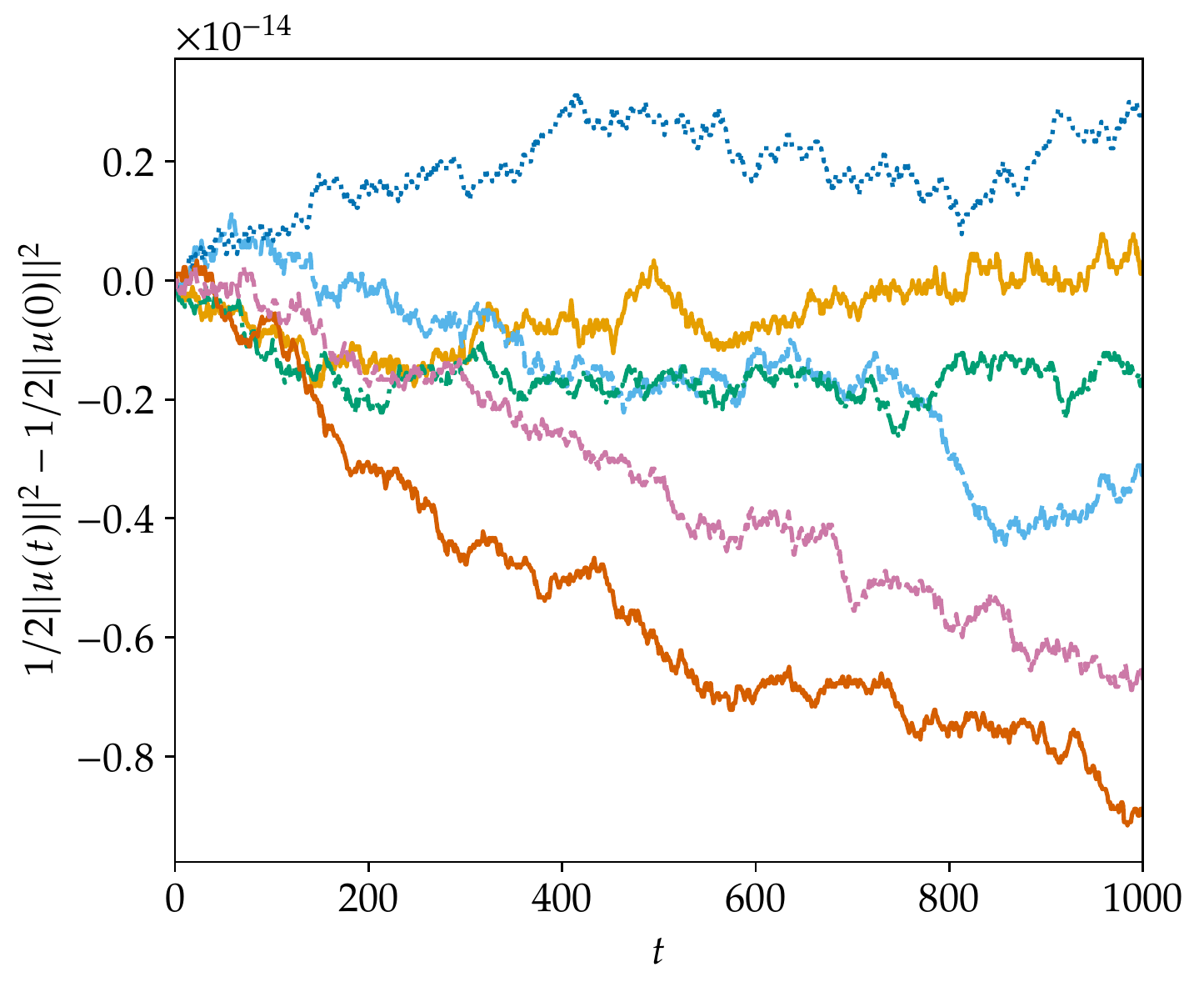}
    \caption{Square entropy progress with relaxation modification with $\Delta t = 0.9$}
  \end{subfigure}\\%
	\begin{subfigure}{0.7\textwidth}
		\includegraphics[width=\linewidth,trim={0 0 0 0},clip]{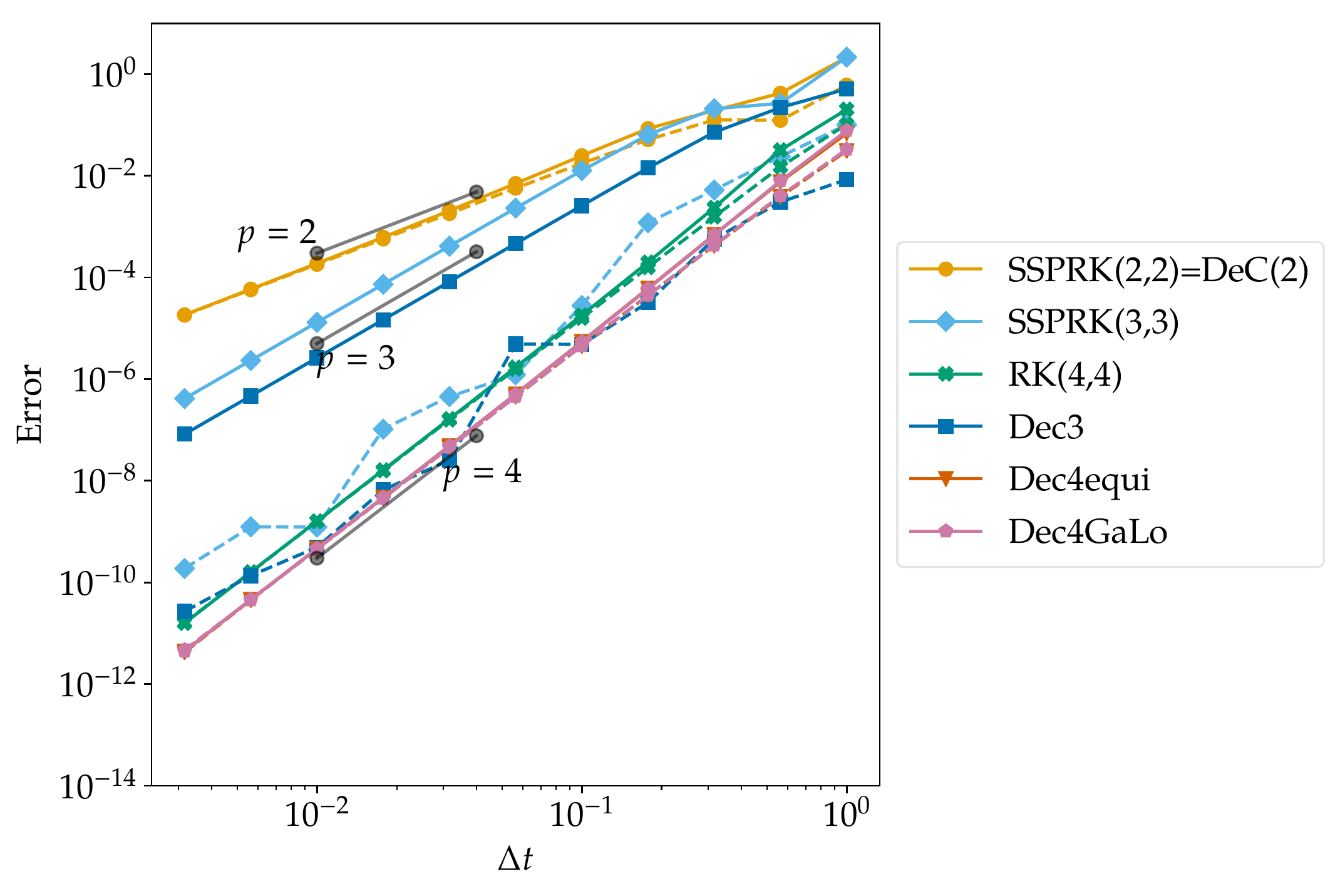}
		\caption{Convergence for some RK and DeC methods, classical mehtods with continuous lines, relaxation methods with dashed lines}\label{fig:linosc_conv}
	\end{subfigure}
  \caption{Nonlinear oscillator with $\Delta t = 0.9$}
  \label{fig:linosc}
\end{figure}

By using the relaxation approach, all schemes are energy conservative up to machine precision. 
 Additionally, in \cref{fig:linosc_conv} we compared the convergence at $t=10$ between the unmodified and the relaxed schemes. In general, the relaxation improved the accuracy for every scheme or at least kept it at the same level as with the unmodified version. However, it should be pointed out that the time step is changed now in every step
 and we need more steps to reach $t=10$. 
\begin{comment}
\begin{figure}
	\centering
\begin{subfigure}{0.7\textwidth}
\includegraphics[width=\linewidth]{Plots/prob1_convergence.pdf}
\end{subfigure}
\caption{Convergence speed for nonlinear oscillator}
\label{fig:linosc_conv}
\end{figure}
\end{comment}

\begin{figure}
	\begin{subfigure}{1\textwidth}
		\includegraphics[width=0.62\linewidth]{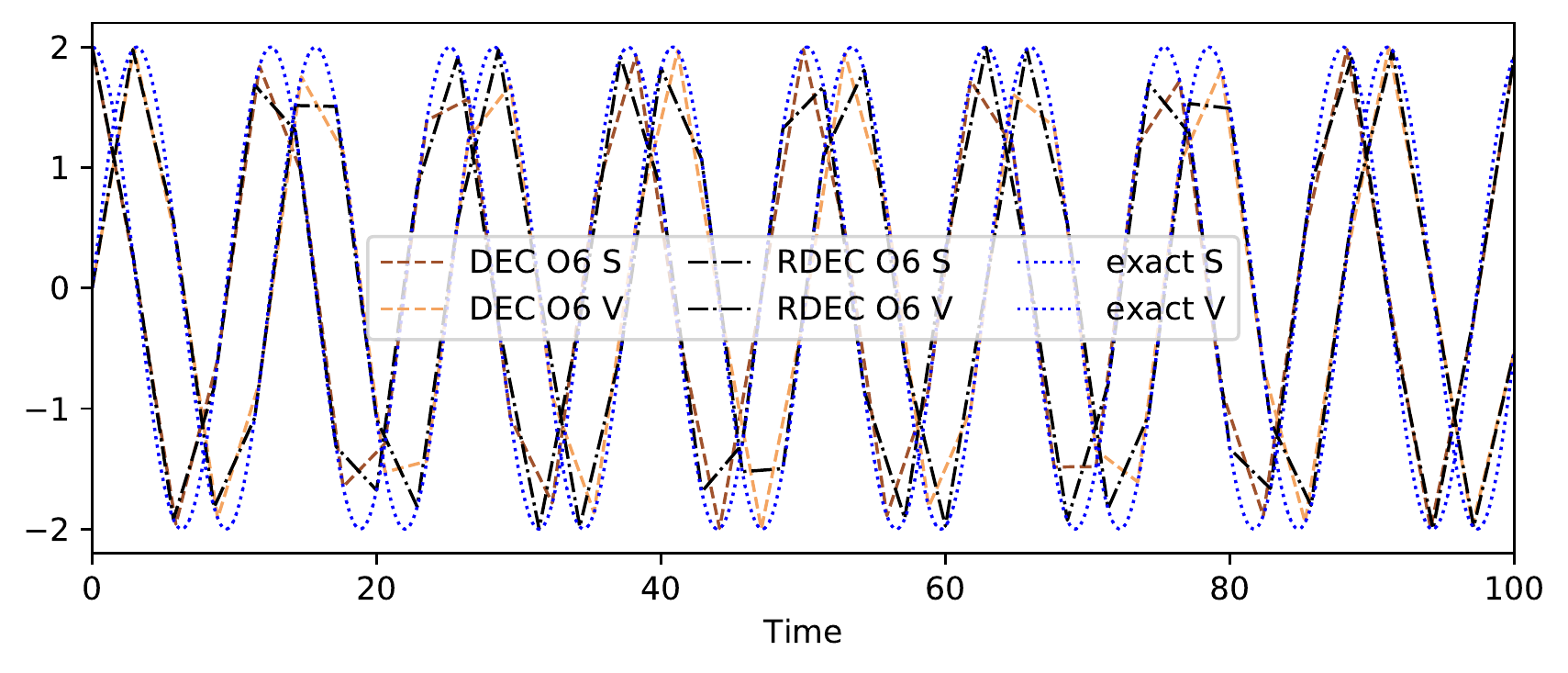}
	\includegraphics[width=0.37\linewidth]{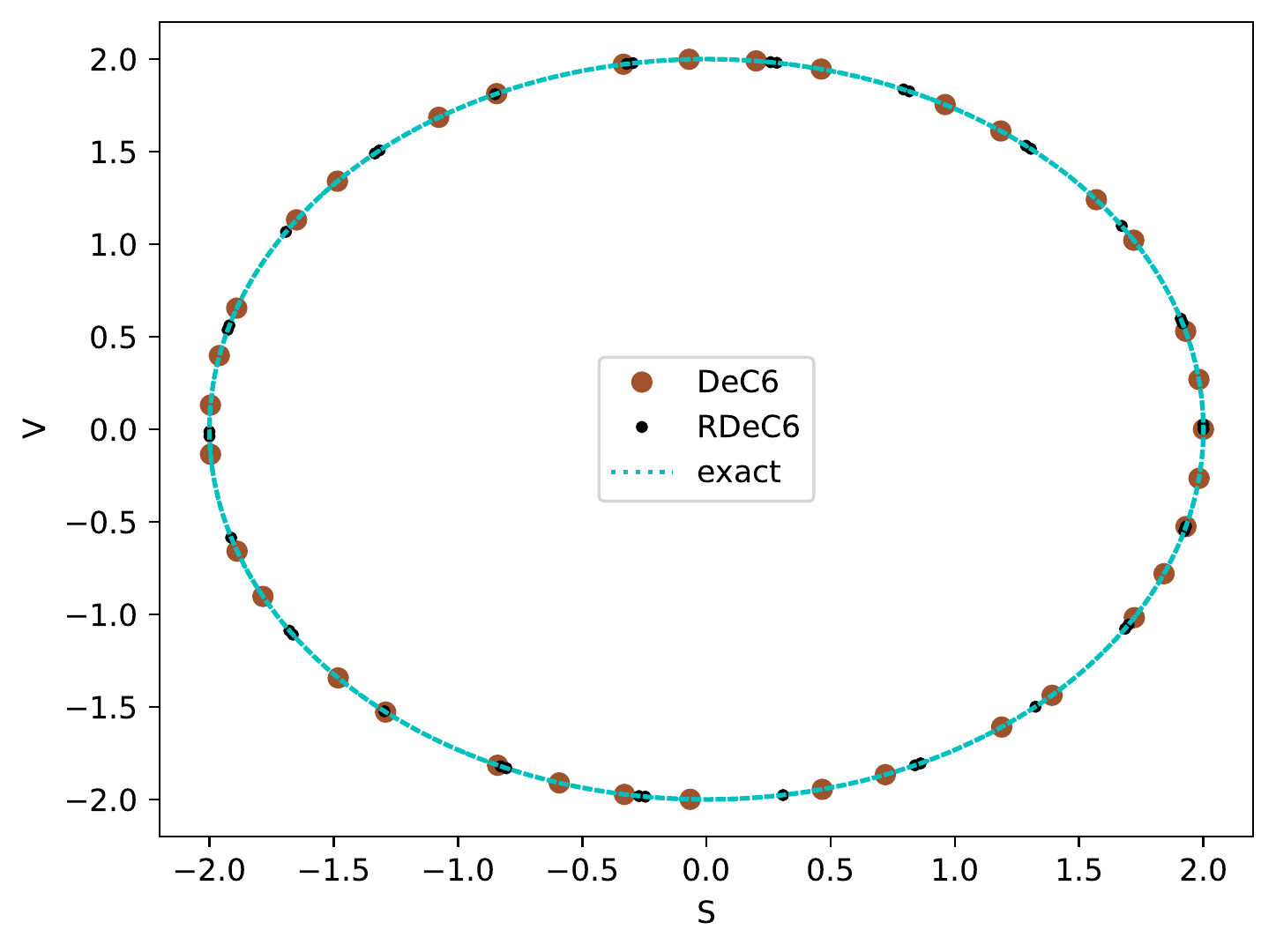}
	\caption{Simulation for order 6 and $N=35$: time evolution (left), phase space (right)}\label{fig:nonLinOscillatorDeC6}
\end{subfigure}

\begin{subfigure}{1\textwidth}
	\includegraphics[width=0.62\linewidth]{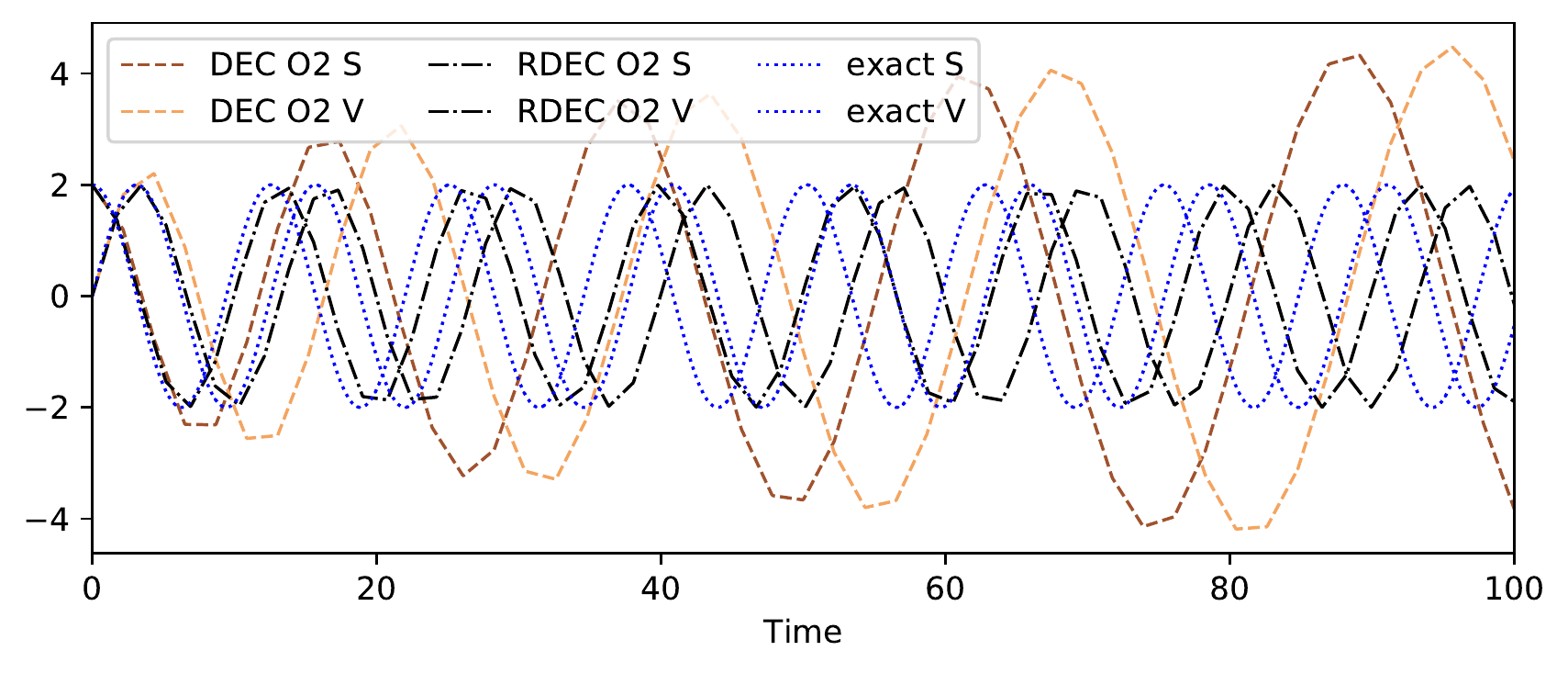}
	\includegraphics[width=0.37\linewidth]{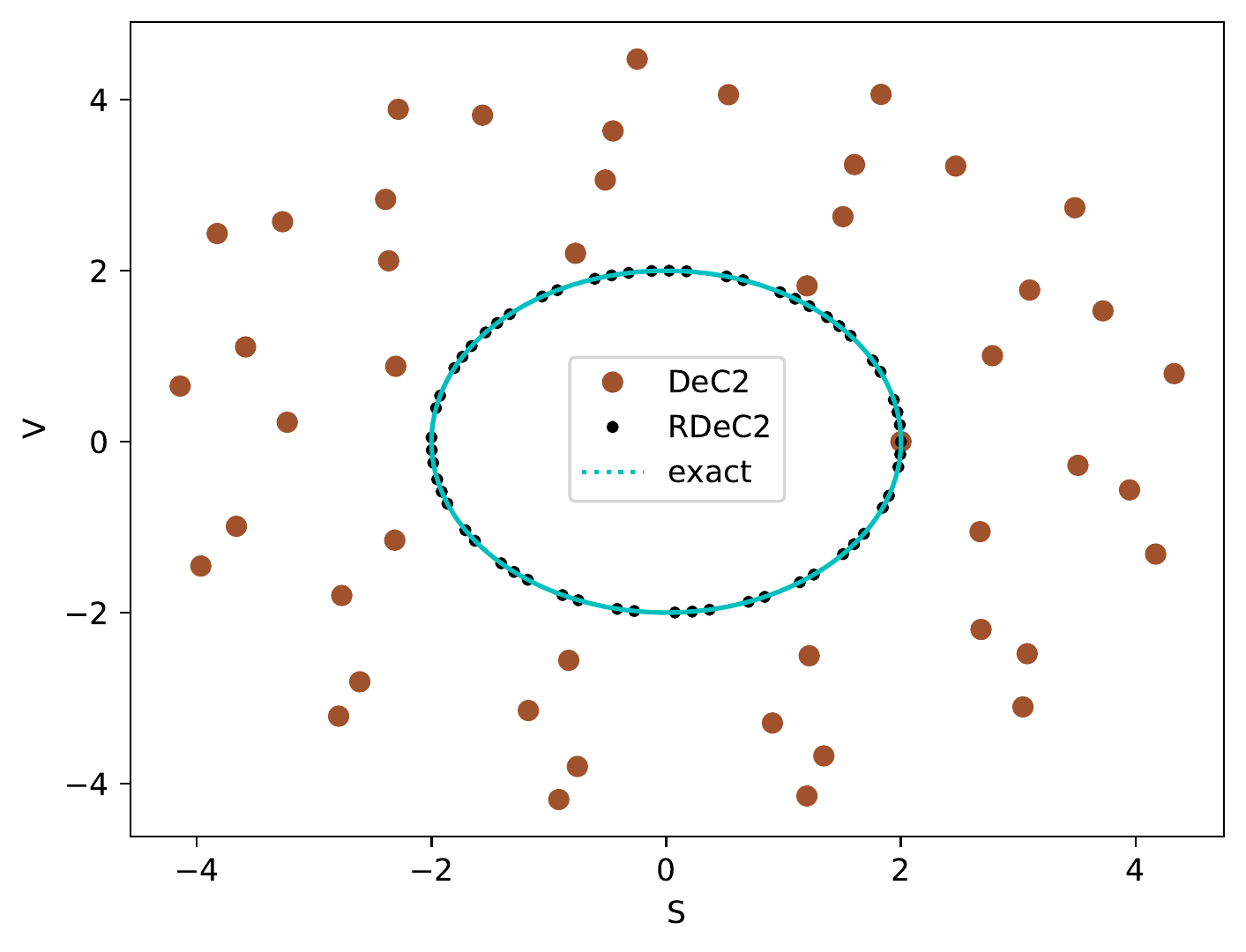}
	\caption{Simulation for order 2 and $N=47$: time evolution (left), phase space (right)}\label{fig:nonLinOscillatorDeC2}
\end{subfigure}

\begin{subfigure}{1\textwidth}
	\includegraphics[width=0.49\linewidth]{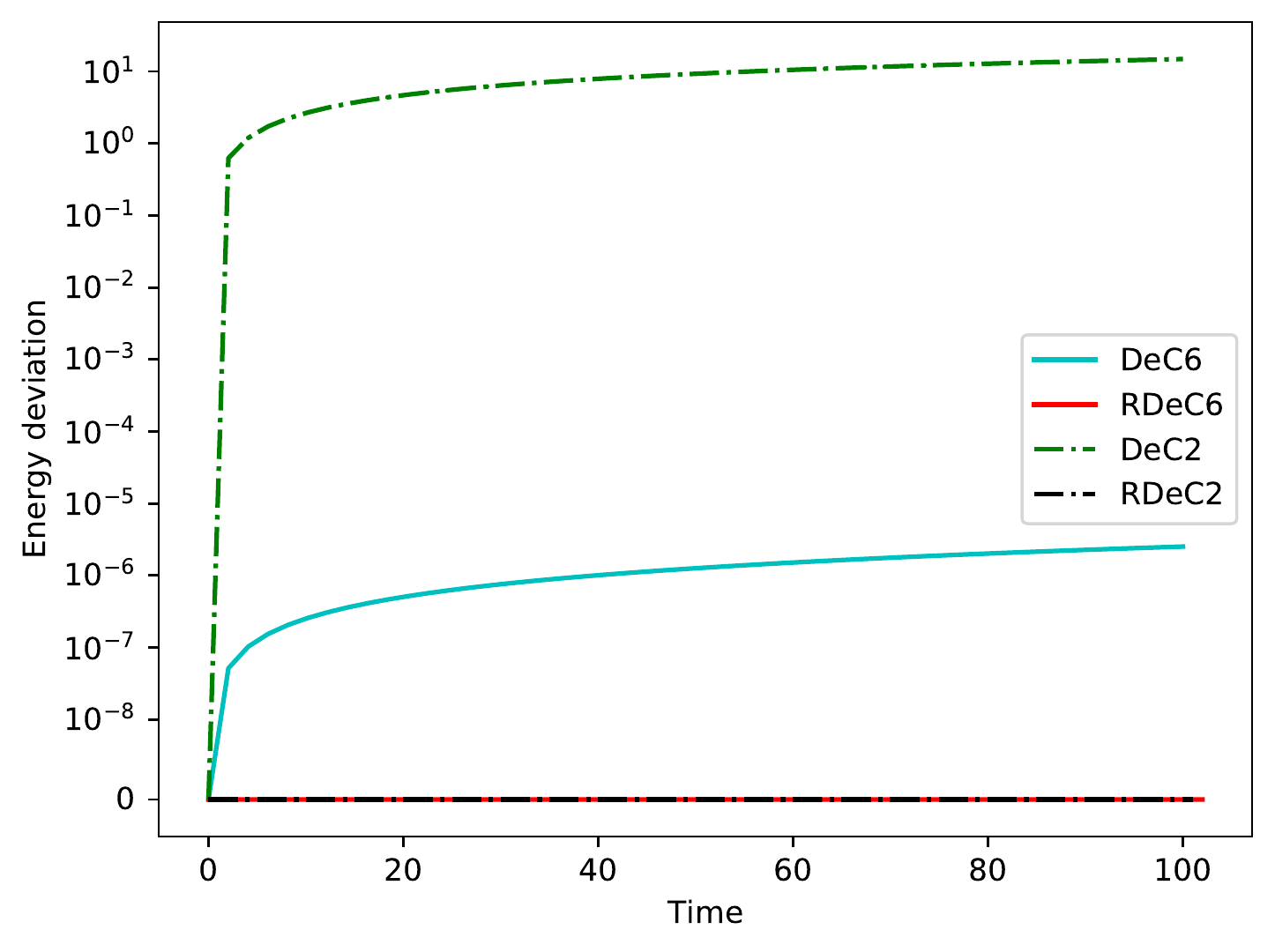}
	\includegraphics[width=0.49\linewidth]{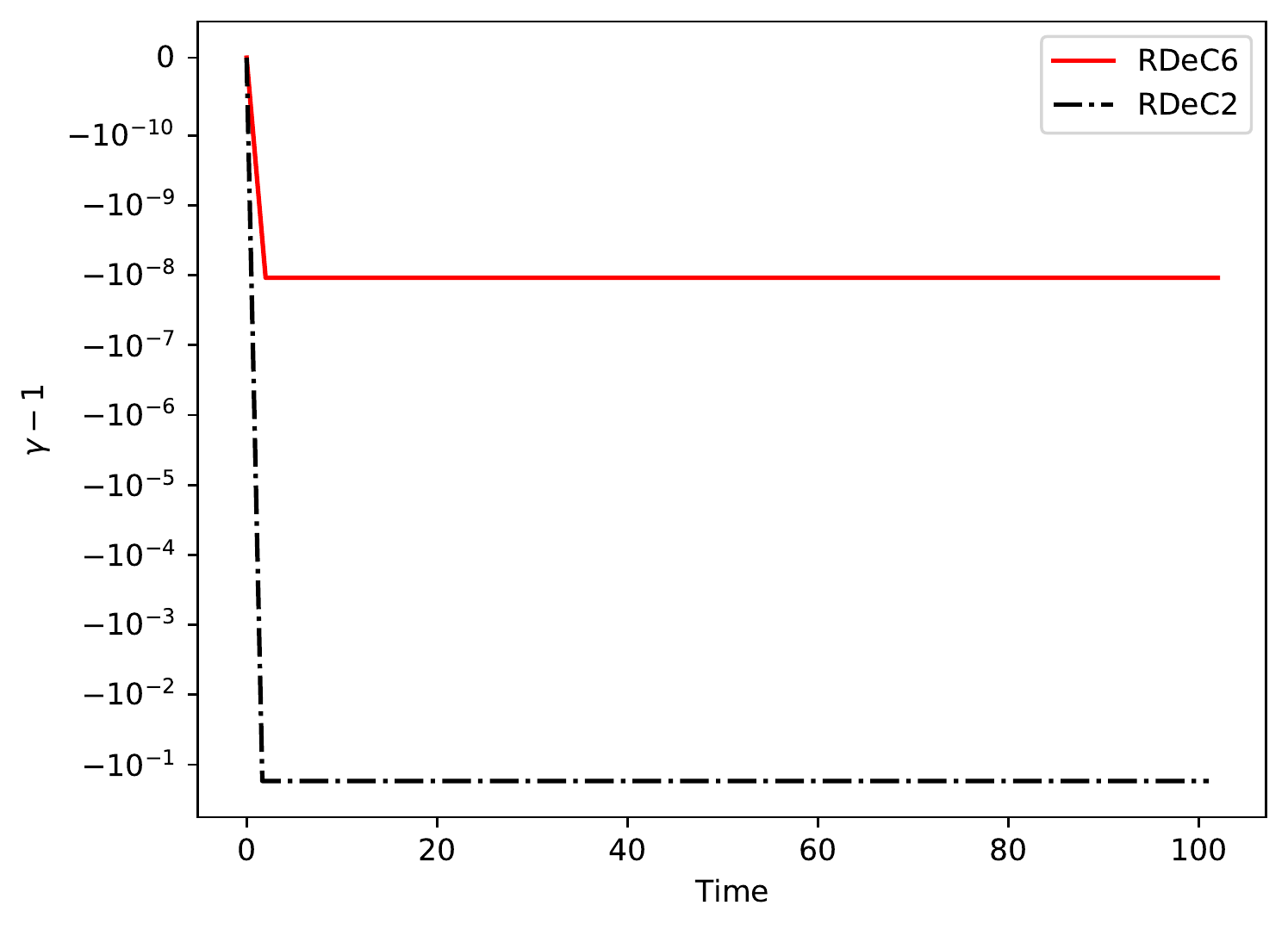}
	\caption{Energy (left) and $\gamma$ (right) comparison for RDeC orders 2 and 6 with $N=50$}\label{fig:nonLinOscillatorComparison}
\end{subfigure}
\caption{Simulation of the nonlinear oscillator for DeC and RDeC of orders 2 and 6} \label{fig:nonLinOscillatorSimulations}
\end{figure}
Thanks to the RDeC formulation we can test the properties of our schemes in the very high order regime. In \cref{fig:nonLinOscillatorSimulations} the simulations for the nonlinear oscillator problem are depicted for DeC and RDeC of order 2 and 6 for different number of timesteps. In the high order solutions in \cref{fig:nonLinOscillatorDeC6} we can not observe qualitatively a difference between DeC and RDeC, but for the second order method in \cref{fig:nonLinOscillatorDeC2} it is evident that the DeC increases the energy of the system violating the physical bounds.

Computing the norm of the energy and its evolution in \cref{fig:nonLinOscillatorComparison} we observe a zero deviation of the total energy of the RDeC schemes, while the DeC are producing more energy with respect to the exact values, with the respective order of accuracy.

Finally, we observe in \cref{fig:nonLinOscillatorConvergence} that there are several phenomena of super convergence in this test. In particular in the case of equispaced points we fall back in the superconvergence of the odds orders as already shown in \cite{ketcheson2019_RRK_rr}, while for Gauss Lobatto nodes we have even further phenomena of super convergence. Only order 2 is not affected by this.
\begin{figure}
\begin{subfigure}{0.49\textwidth}
   \includegraphics[width=\linewidth]{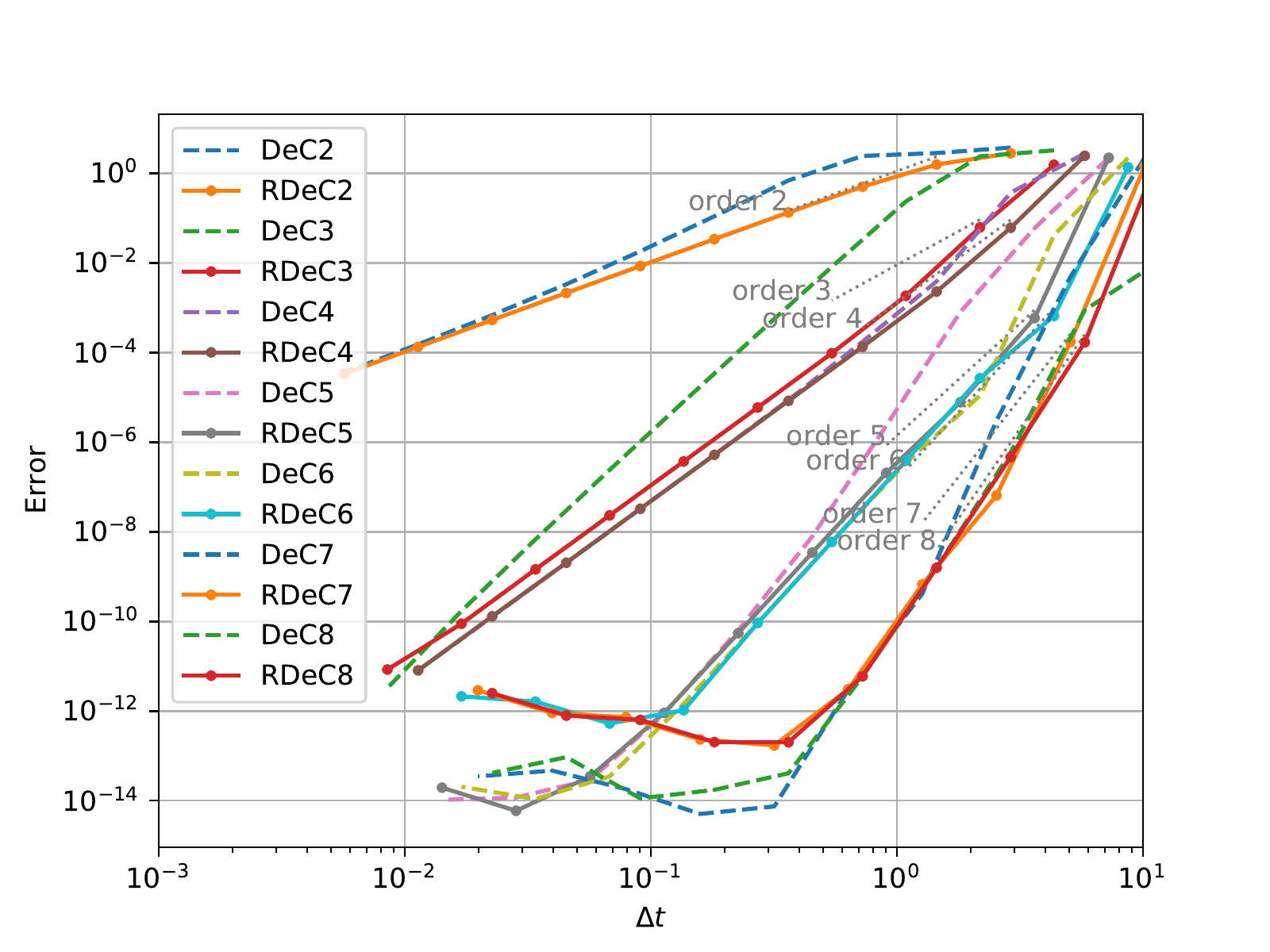}
   \caption{Equispaced subtimesteps}\label{fig:nonLinOscillatorConvergenceEqui}
\end{subfigure}
\begin{subfigure}{0.49\textwidth}
\includegraphics[width=\linewidth]{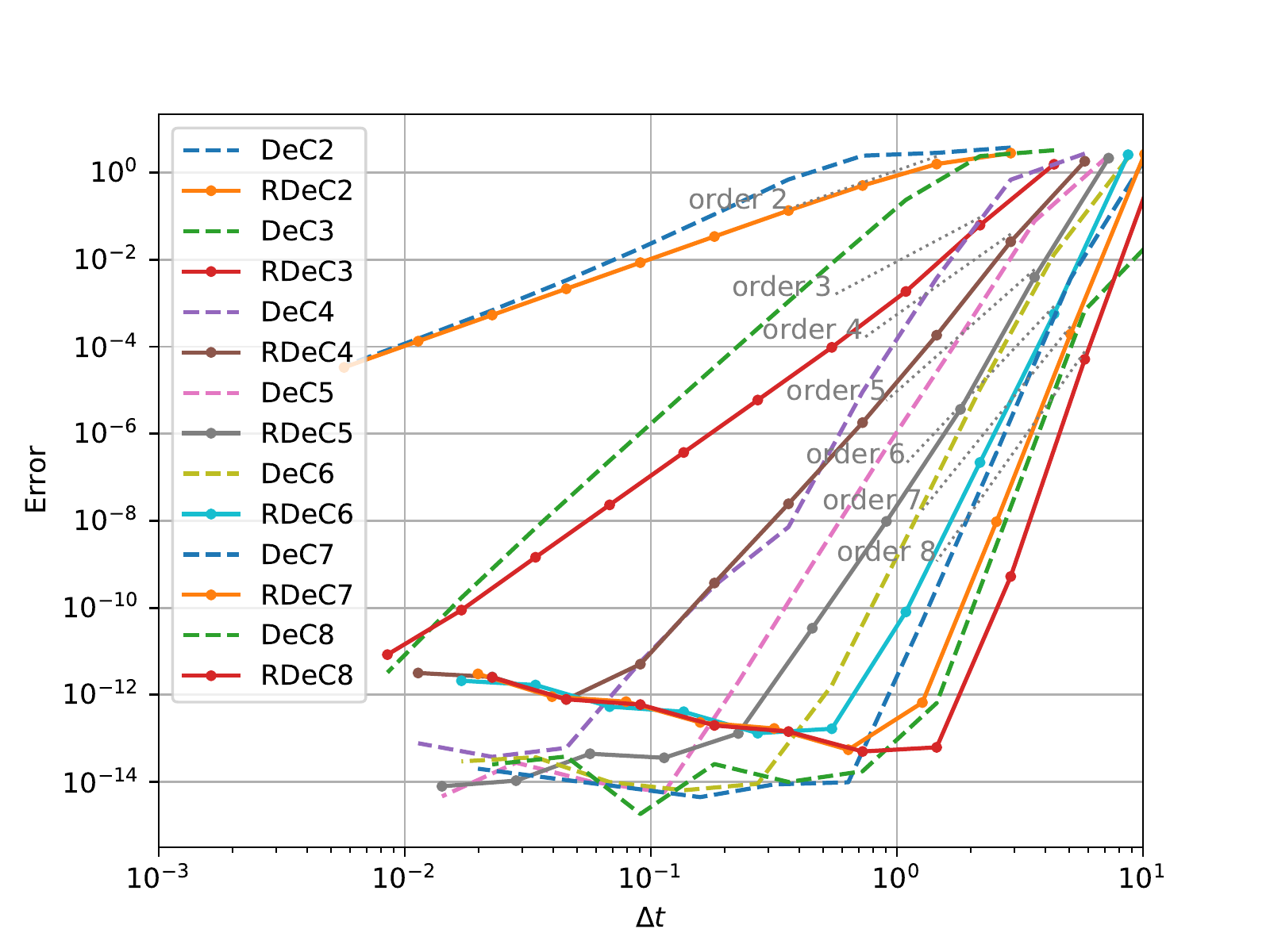}
\caption{Gauss Lobatto subtimesteps}\label{fig:nonLinOscillatorConvergenceGLB}
\end{subfigure}
\caption{Convergence error of DeC and RDeC methods for nonlinear oscillator}\label{fig:nonLinOscillatorConvergence}
\end{figure}

\subsubsection{Damped Nonlinear Oscillator}
The second ODE we consider is a damped version of the previous nonlinear oscillator.
It is defined by the initial value problem\begin{equation}
	\begin{cases}
		\partial_t \begin{pmatrix}
			u_1\\u_2
		\end{pmatrix}= \begin{pmatrix}
			\frac{-u_2}{n} -\alpha u_1\\\frac{u_1}{n}-\alpha u_2
		\end{pmatrix}, \qquad \text{ with } n:=\sqrt{u_1^2+u_2^2},\\
		u_1(0) = u_1^0,	\\ u_2(0) = u_2^0.
	\end{cases}
\end{equation}
The final time is set to $T=100$.
The system verifies the exact solution
\begin{equation}
	\begin{pmatrix}
		u_1(t)\\u_2(t)
	\end{pmatrix} = n(t) \begin{pmatrix}
		\cos(\theta) & -\sin(\theta)\\
		\sin(\theta) & \cos(\theta)
	\end{pmatrix}\begin{pmatrix}
		u_1^0\\u_2^0
	\end{pmatrix}, \quad n(t)=n(0) e^{-\alpha t}, \quad \theta(t):=\frac{1}{\alpha n(0)}\left(e^{\alpha t}-1\right).
\end{equation}
We take $\alpha=0.01$. Here, we consider only the RDeC formulation and we do not compare it with RRK methods. However, similar results can be seen. 
This problem is much harder to be solved numerically as the oscillations have frequencies that increase with time and tends to infinity. So, we need to use many more timesteps. 
In \cref{fig:DampnonLinOscillatorDeC6,fig:DampnonLinOscillatorDeC2} we show the simulations and the phase values till time 100 for the RDeC and DeC of order 6 and 2 respectively. As before, qualitatively the high order methods are both precise, while the second order methods show large differences, DeC2 does not catch at all the decrease in time of the energy and hence the increase of frequency, while RDeC2 is much more accurate.

In \cref{fig:DampnonLinOscillatorComparison} we can observe the different errors of the energy from the exact one and clearly the high order methods are much more precise. Moreover, we can state that the relaxed DeCs are around one order of magnitude more precise in catching the energy level.
\begin{figure}
	\begin{subfigure}{1\textwidth}
		\includegraphics[width=0.62\linewidth]{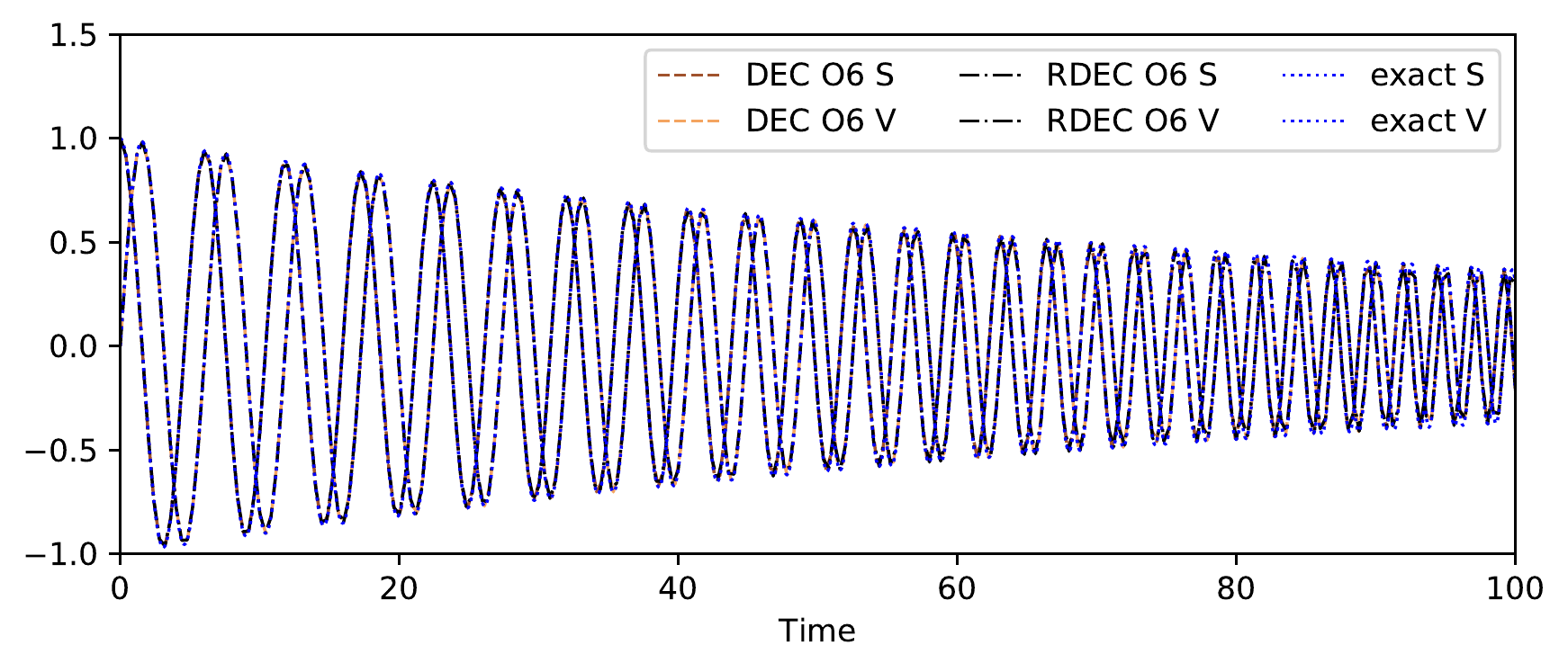}
		\includegraphics[width=0.37\linewidth]{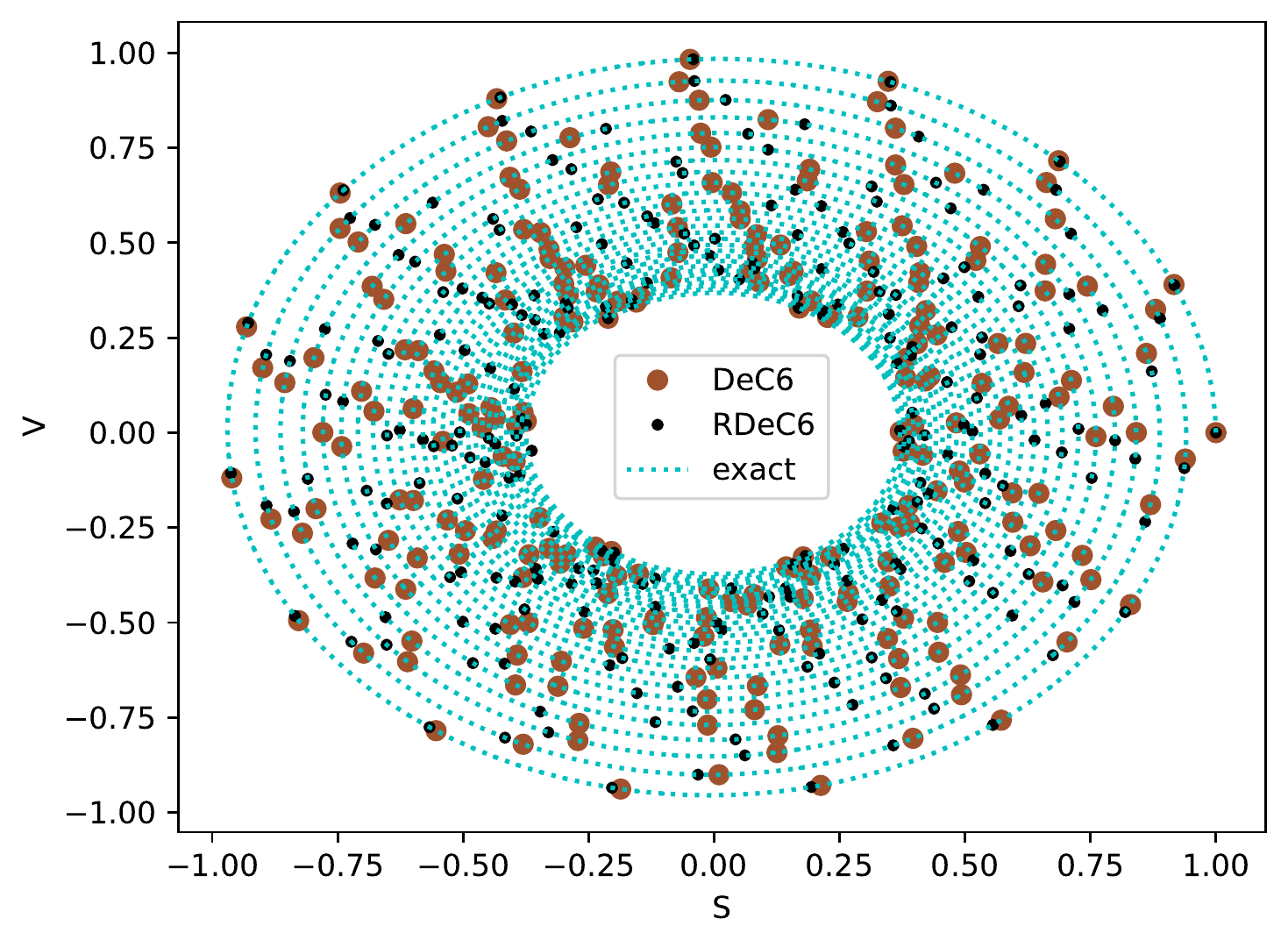}
		\caption{Simulation for order 6 and $N=250$: time evolution (left), phase space (right)}\label{fig:DampnonLinOscillatorDeC6}
	\end{subfigure}
	
	\begin{subfigure}{1\textwidth}
		\includegraphics[width=0.62\linewidth]{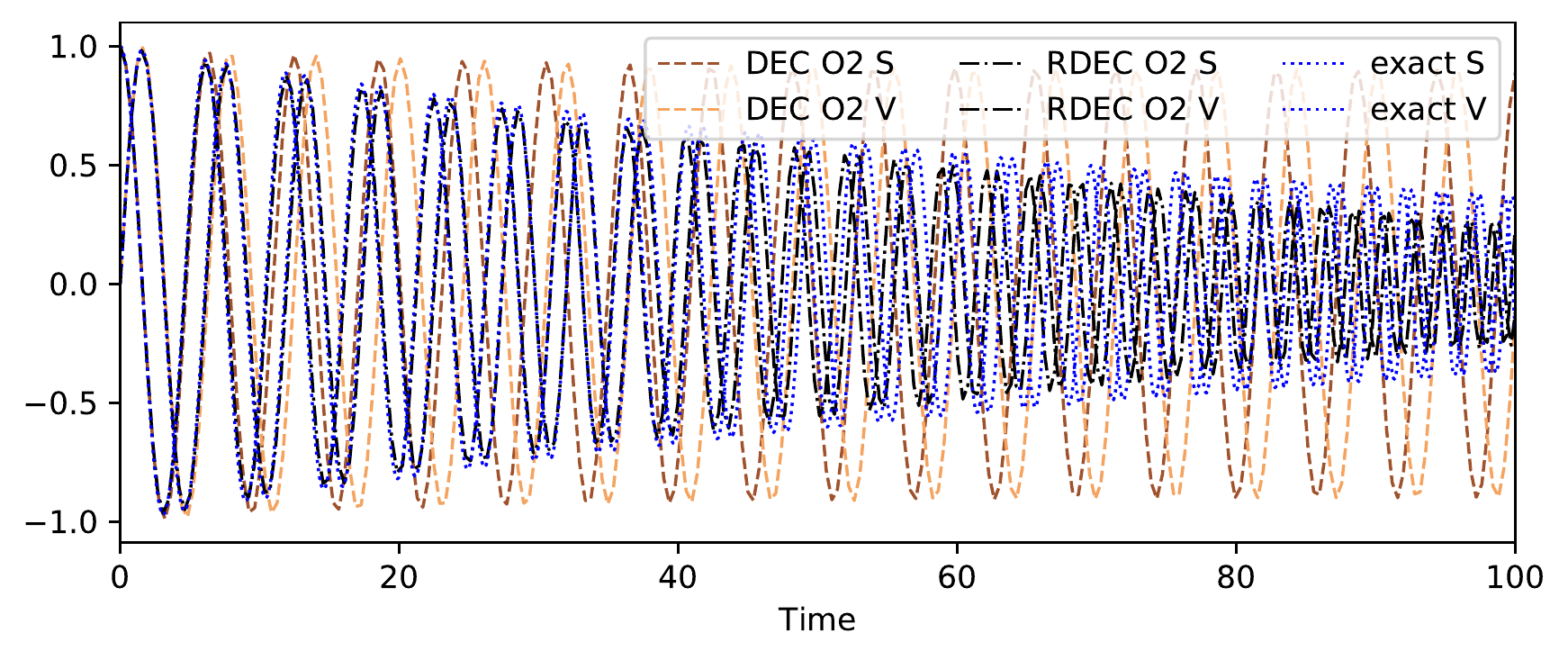}
		\includegraphics[width=0.37\linewidth]{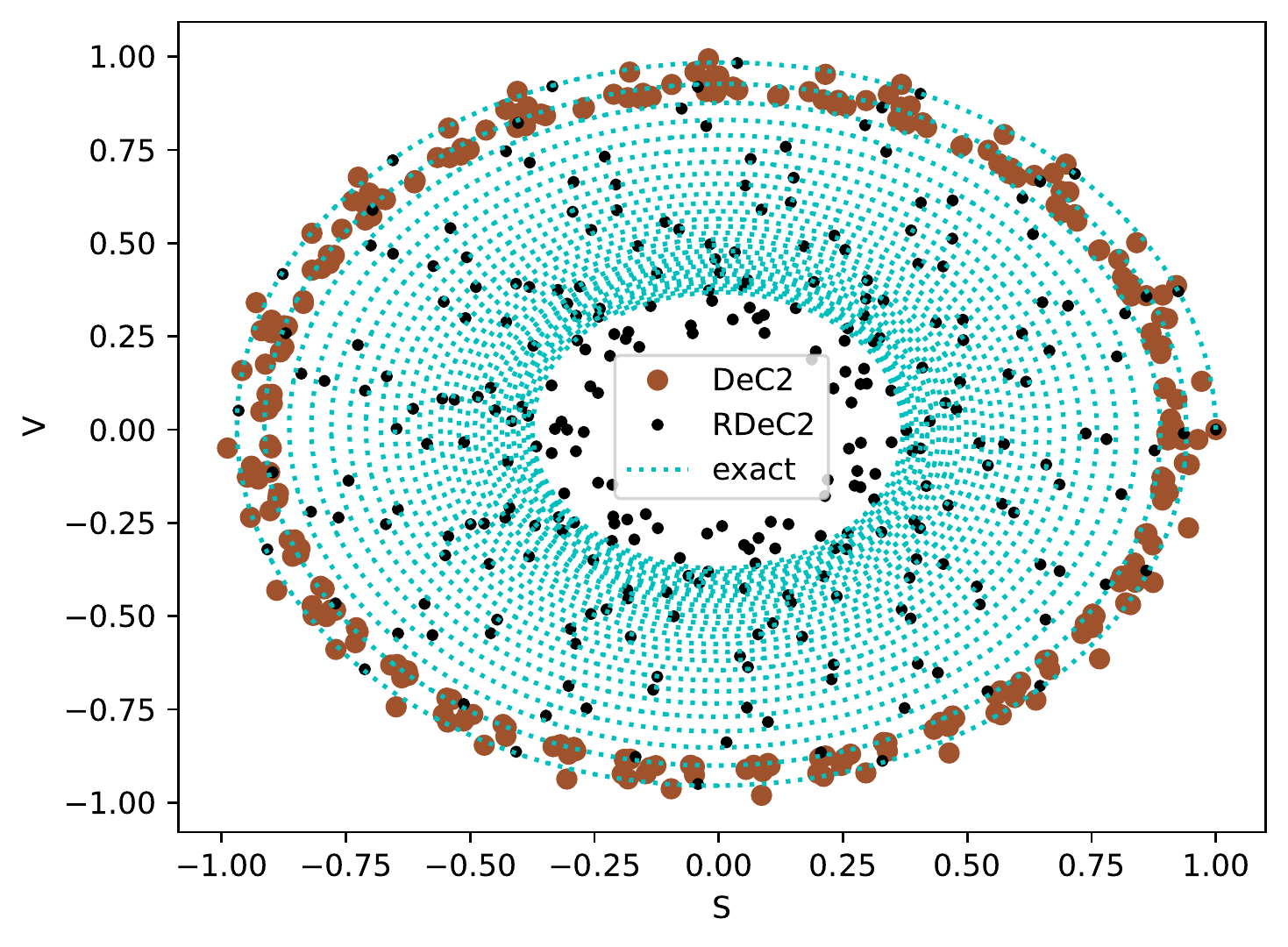}
		\caption{Simulation for order 2 and $N=250$: time evolution (left), phase space (right)}\label{fig:DampnonLinOscillatorDeC2}
	\end{subfigure}
	
	\begin{subfigure}{1\textwidth}
		\includegraphics[width=0.49\linewidth]{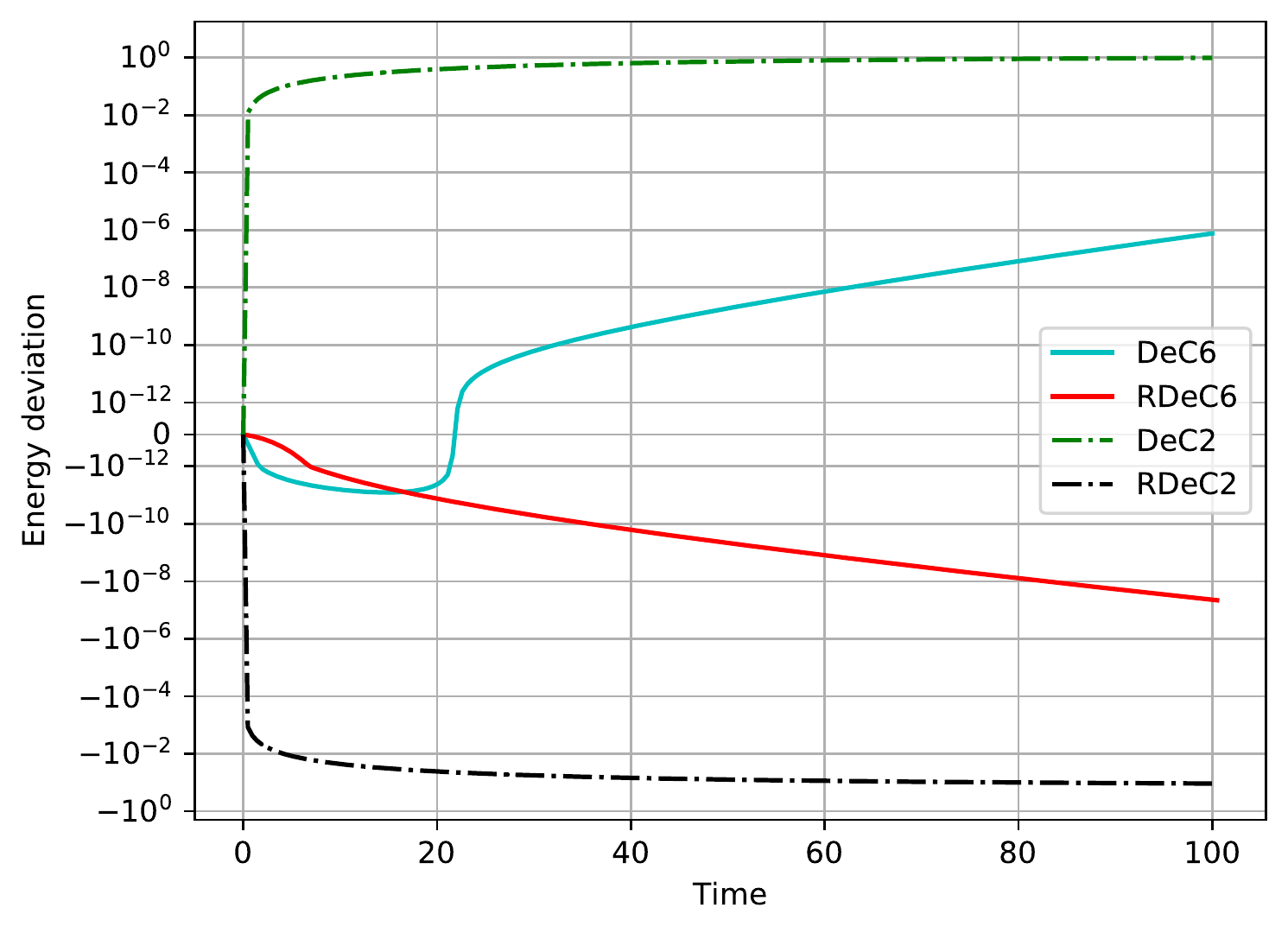}
		\includegraphics[width=0.49\linewidth]{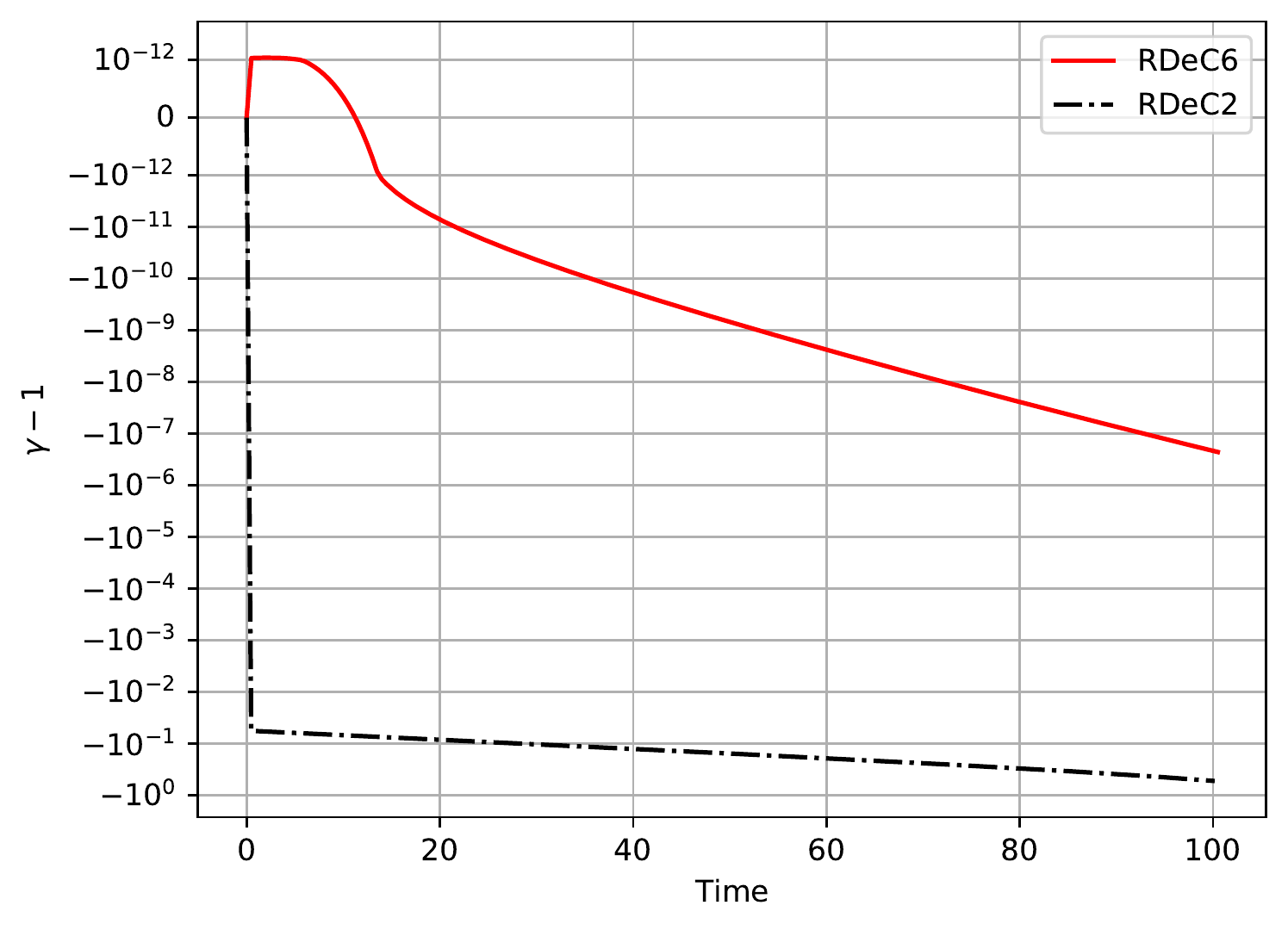}
		\caption{Energy (left) and $\gamma$ (right) comparison for RDeC orders 2 and 6 with $N=50$}\label{fig:DampnonLinOscillatorComparison}
	\end{subfigure}
	\caption{Simulation of the damped nonlinear oscillator for DeC and RDeC of orders 2 and 6} \label{fig:DampnonLinOscillatorSimulations}
\end{figure}
In \cref{fig:DampnonLinOscillatorConvergence} we observe similar superconvergence results as in the previous test.
\begin{figure}
	\begin{subfigure}{0.49\textwidth}
		\includegraphics[width=\linewidth]{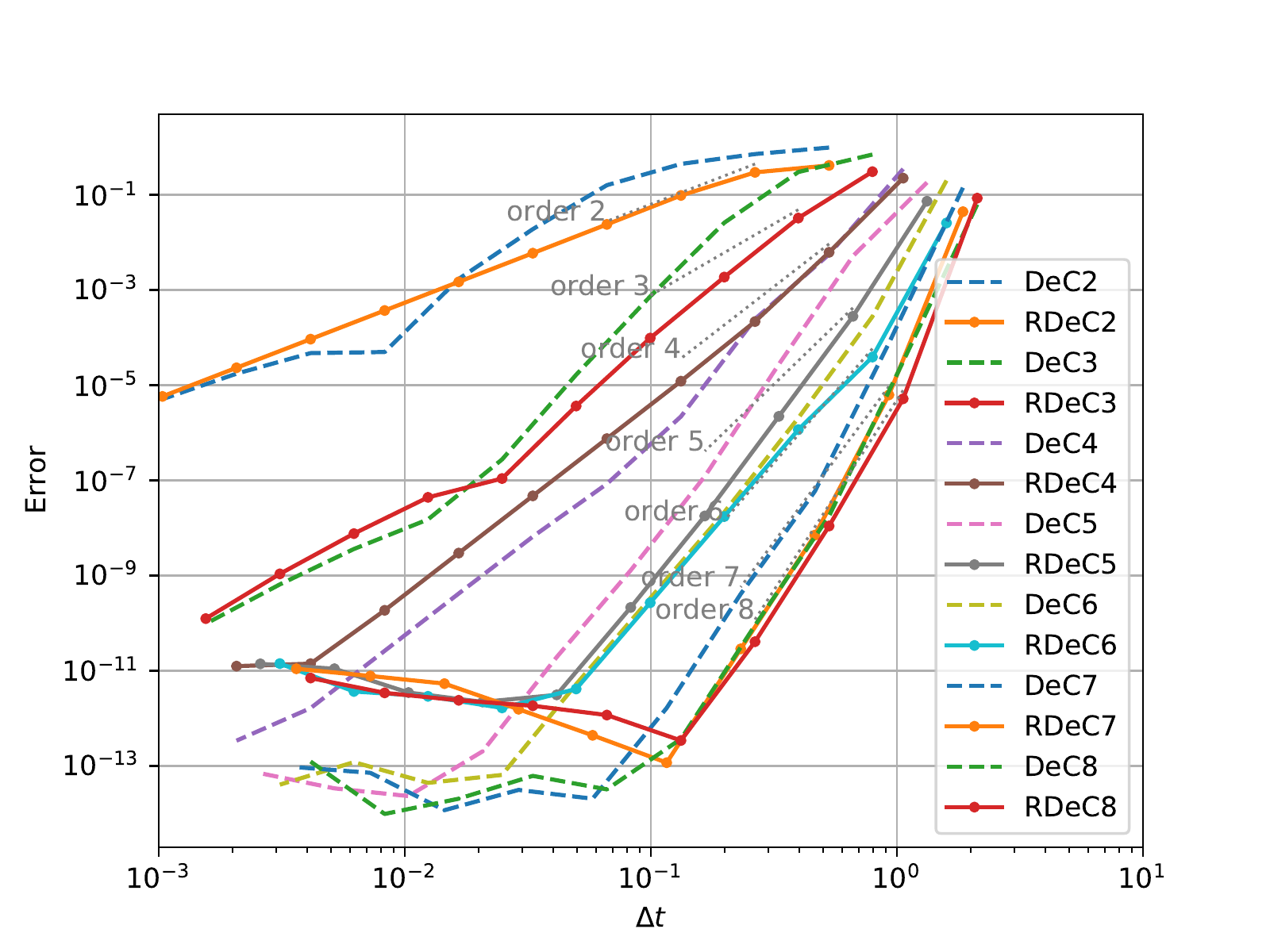}
		\caption{Equispaced subtimesteps}\label{fig:DampnonLinOscillatorConvergenceEqui}
	\end{subfigure}
	\begin{subfigure}{0.49\textwidth}
		\includegraphics[width=\linewidth]{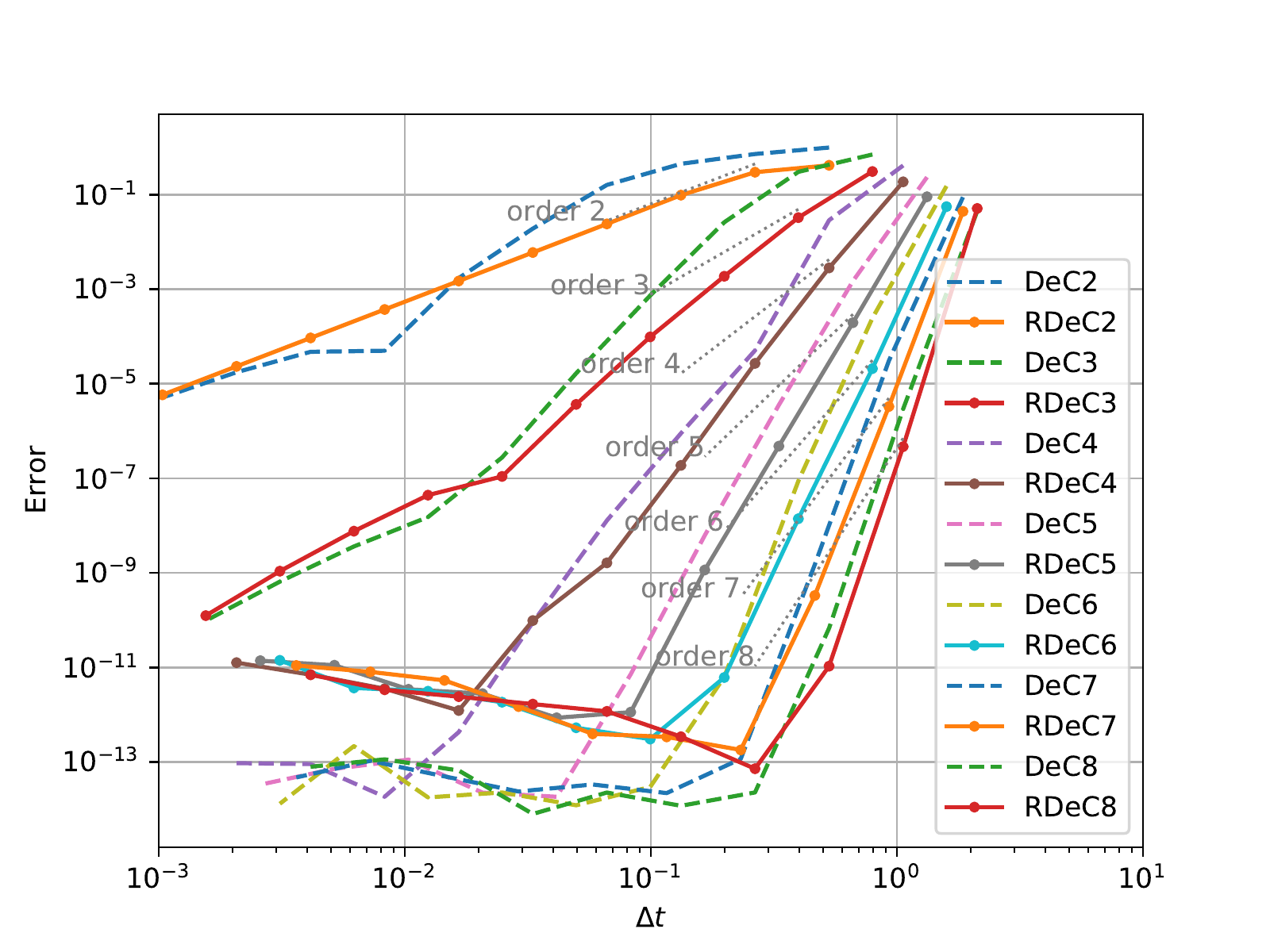}
		\caption{Gauss Lobatto subtimesteps}\label{fig:DampnonLinOscillatorConvergenceGLB}
	\end{subfigure}
	\caption{Convergence error of DeC and RDeC methods for nonlinear oscillator}\label{fig:DampnonLinOscillatorConvergence}
\end{figure}

\subsubsection{Nonlinear Pendulum}
As another example from  \cite{ranocha2020relaxation}, we focus on  the nonlinear pendulum. The trajectories of the pendulum are given by the system
$
\begin{bmatrix}
u_1 \\
u_2
\end{bmatrix}' =\begin{bmatrix}
-\sin{(u_2)}\\
u_1
\end{bmatrix}
$
with the initial condition
$
\begin{bmatrix}
u_1(0) \\
u_2(0)
\end{bmatrix} = 
\begin{bmatrix}
1.5 \\
0
\end{bmatrix}.
$
Here, we investigate  the entropy 
$
\eta(u) = \frac{1}{2} u_1^2 - \cos (u_2).
$
Again the entropy progress over time is shown in  \cref{fig:nonpen} for $\Delta t = 0.9$ and $T = 1000$. The entropy behaves as expected, namely it is not constant for non-relaxed schemes but zero up to machine precision for the relaxed schemes. 
\begin{figure}[h]
	\begin{subfigure}{1\textwidth}
		\includegraphics[width=\linewidth,trim={0 130 0 140},clip]{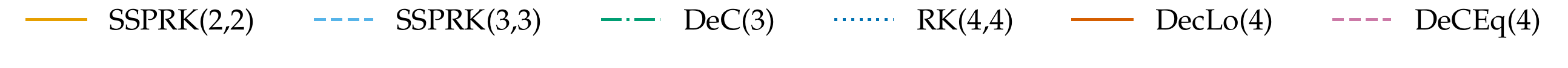}
	\end{subfigure}
	\begin{subfigure}{0.49\textwidth}
		\includegraphics[width=\linewidth]{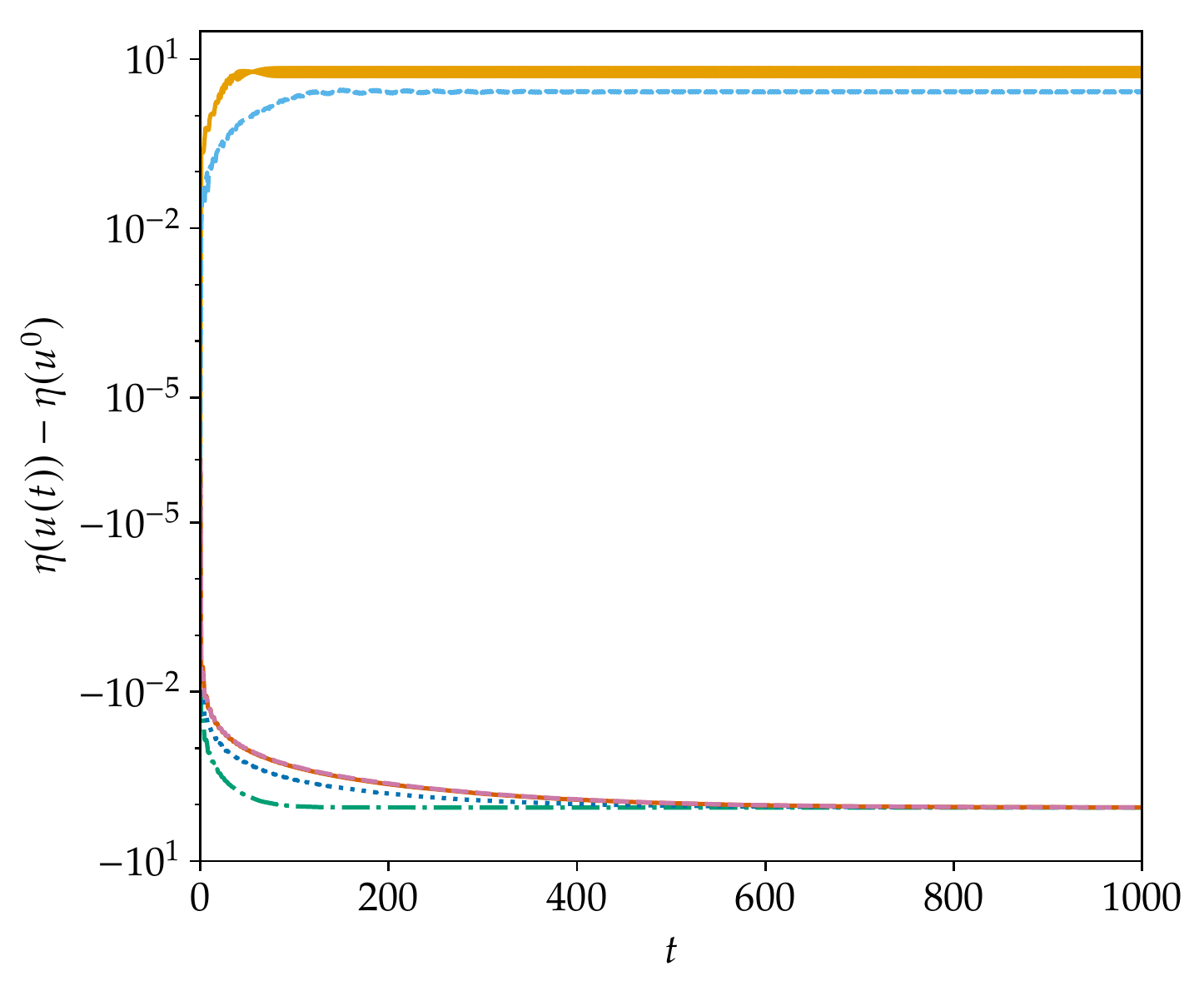}
		\caption{Without relaxation} \label{fig:linpenddec}
	\end{subfigure}
	\hspace*{\fill} % separation between the subfigures
	\begin{subfigure}{0.49\textwidth}
		\includegraphics[width=\linewidth]{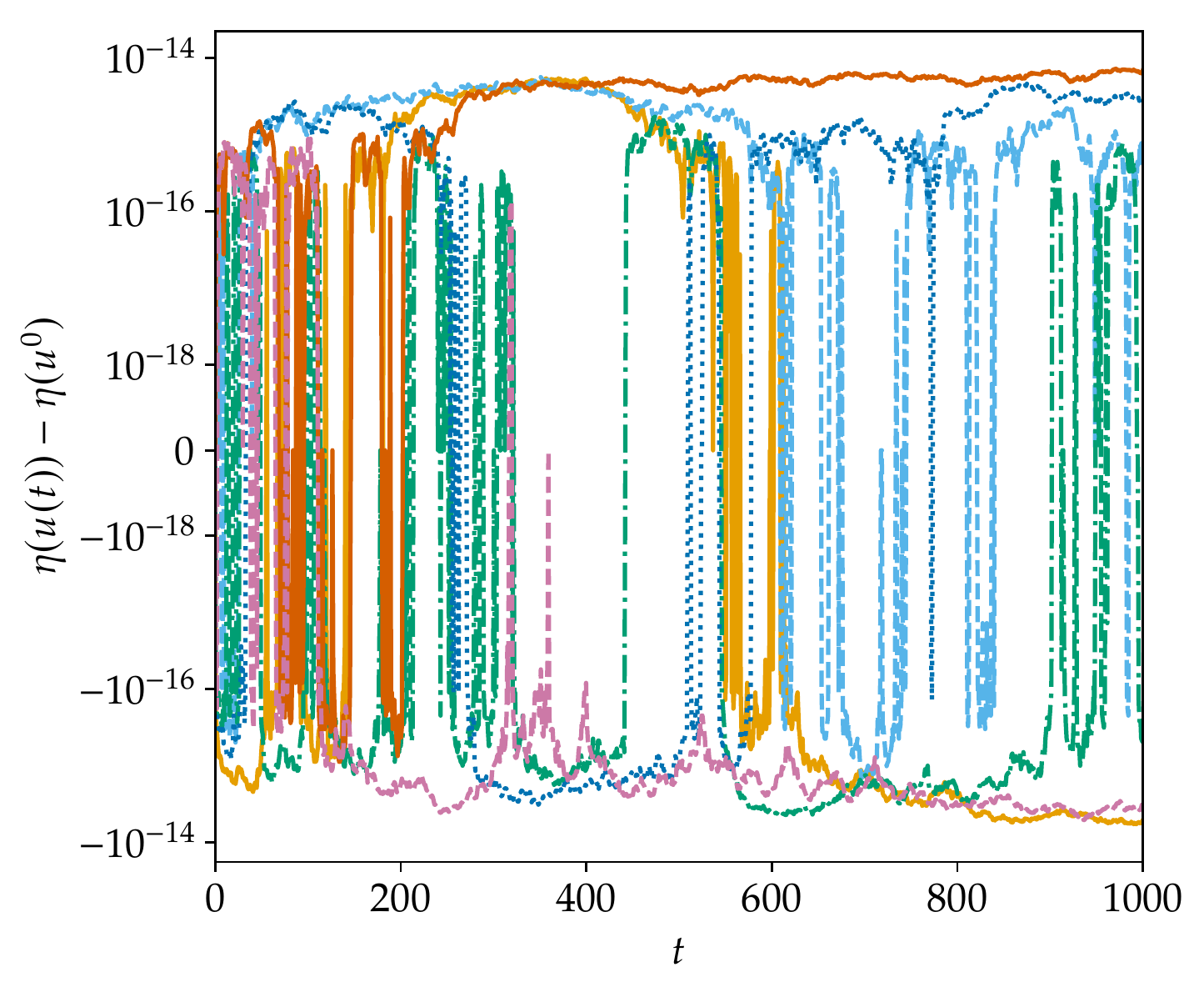}
		\caption{With relaxation} \label{fig:linpendrel}
	\end{subfigure}
	\caption{Entropy progression for nonlinear pendulum with $\Delta t = 0.9$}
	\label{fig:nonpen}
\end{figure}
Notice that without relaxation the entropy for SSPRK(3,3) is increasing whereas the entropy for DeC3  is decreasing
and behaves similarly to the fourth order schemes. This explains also  the different behaviors of the trajectories visible in \cref{fig:nonpen-timesteps}: the pendulum breaks out for SSPRK(3,3), goes to the center for DeC3 and stays within its path when the relaxation term is applied. 

\begin{figure}
	\begin{subfigure}{0.33\textwidth}
		\includegraphics[width=\linewidth]{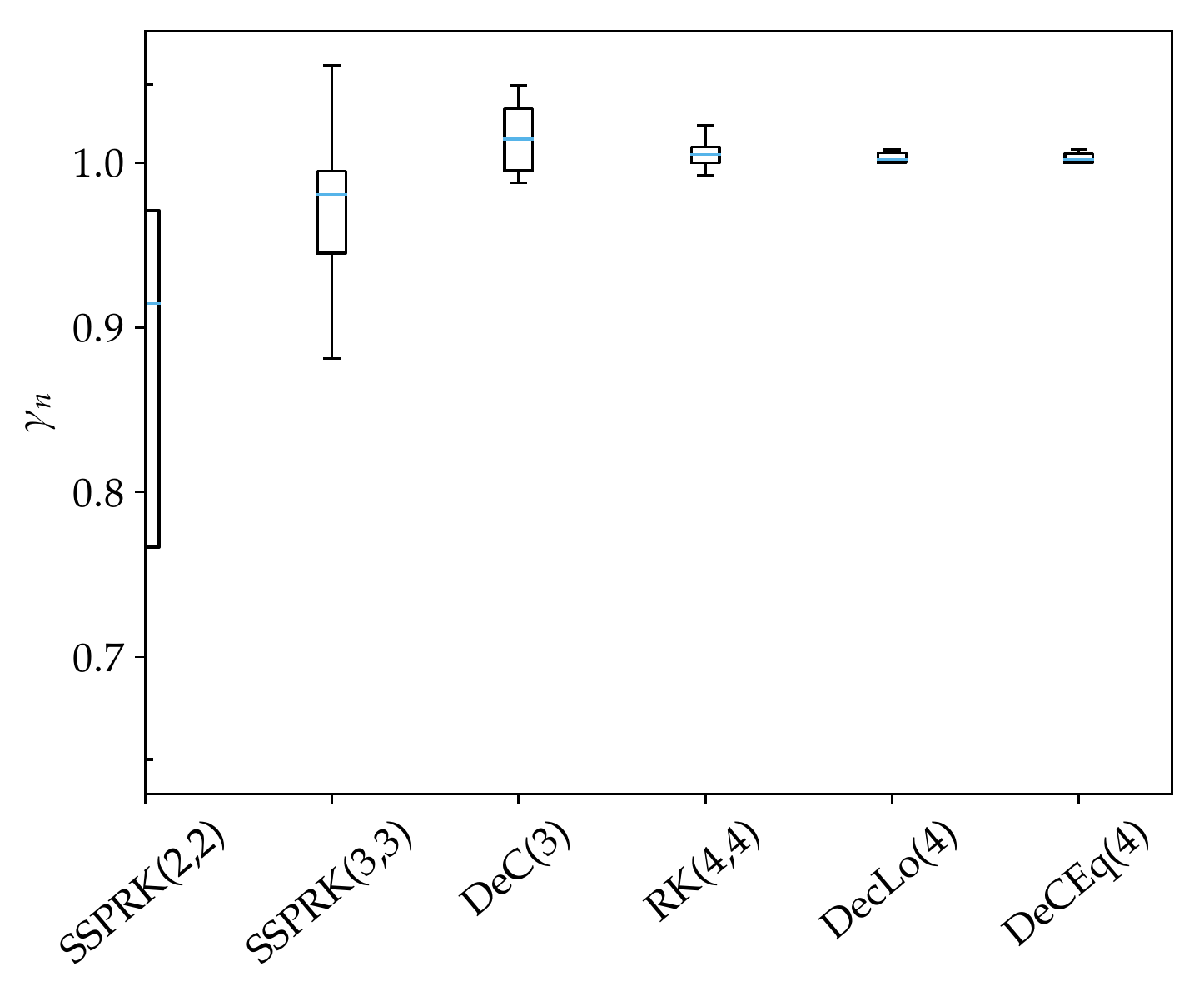} 
		\caption{Distribution of $\gamma_n$: in blue the median, the boxes denote the three central quartiles \label{fig:pendolumGammas}}
	\end{subfigure}
	% separation between the subfigures
	\begin{subfigure}{0.33\textwidth}
		\includegraphics[width=\linewidth]{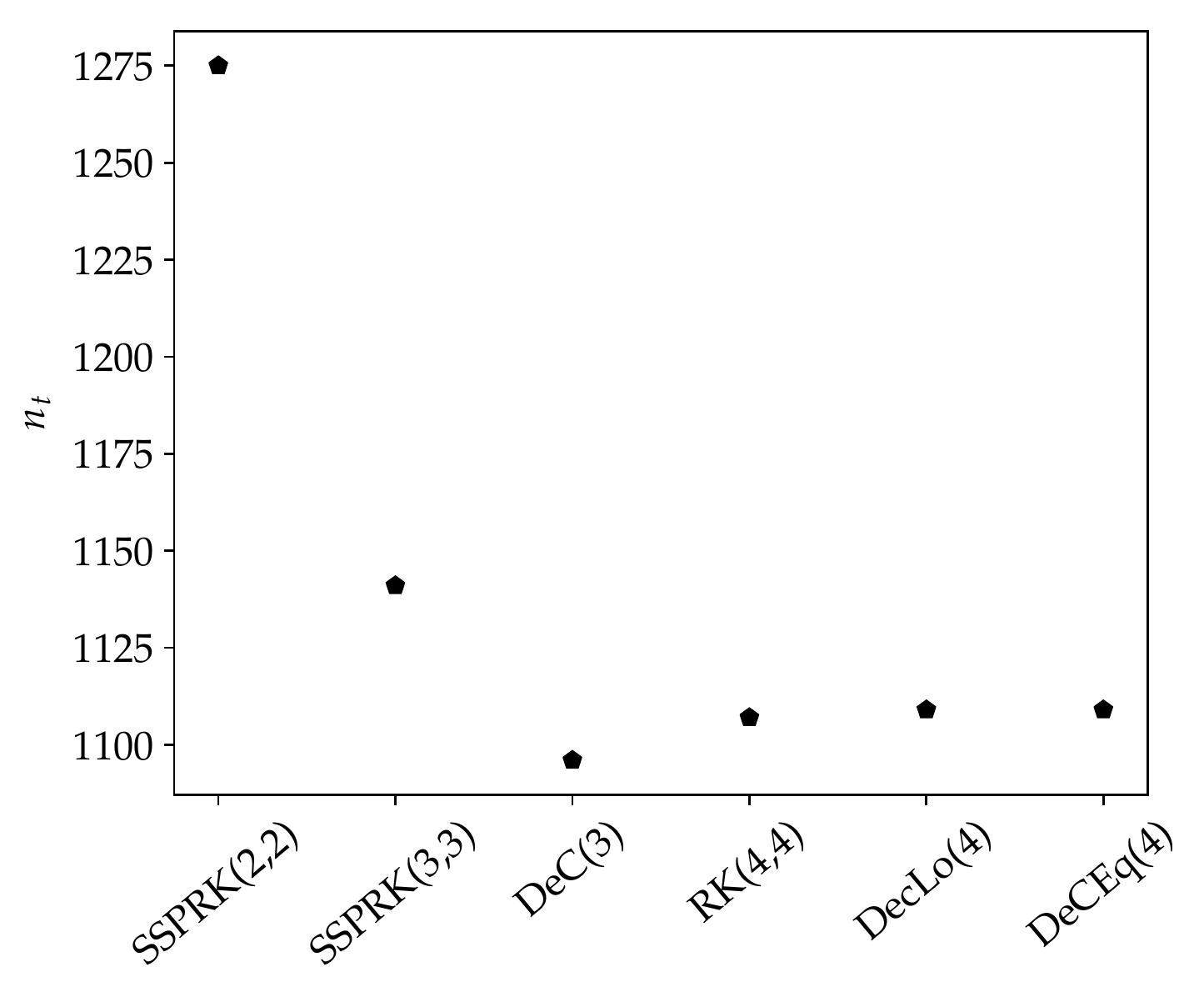}
		\caption{Amount of time steps $n_t$}
	\end{subfigure}
	\begin{subfigure}{0.33\textwidth}
		\includegraphics[width=\linewidth]{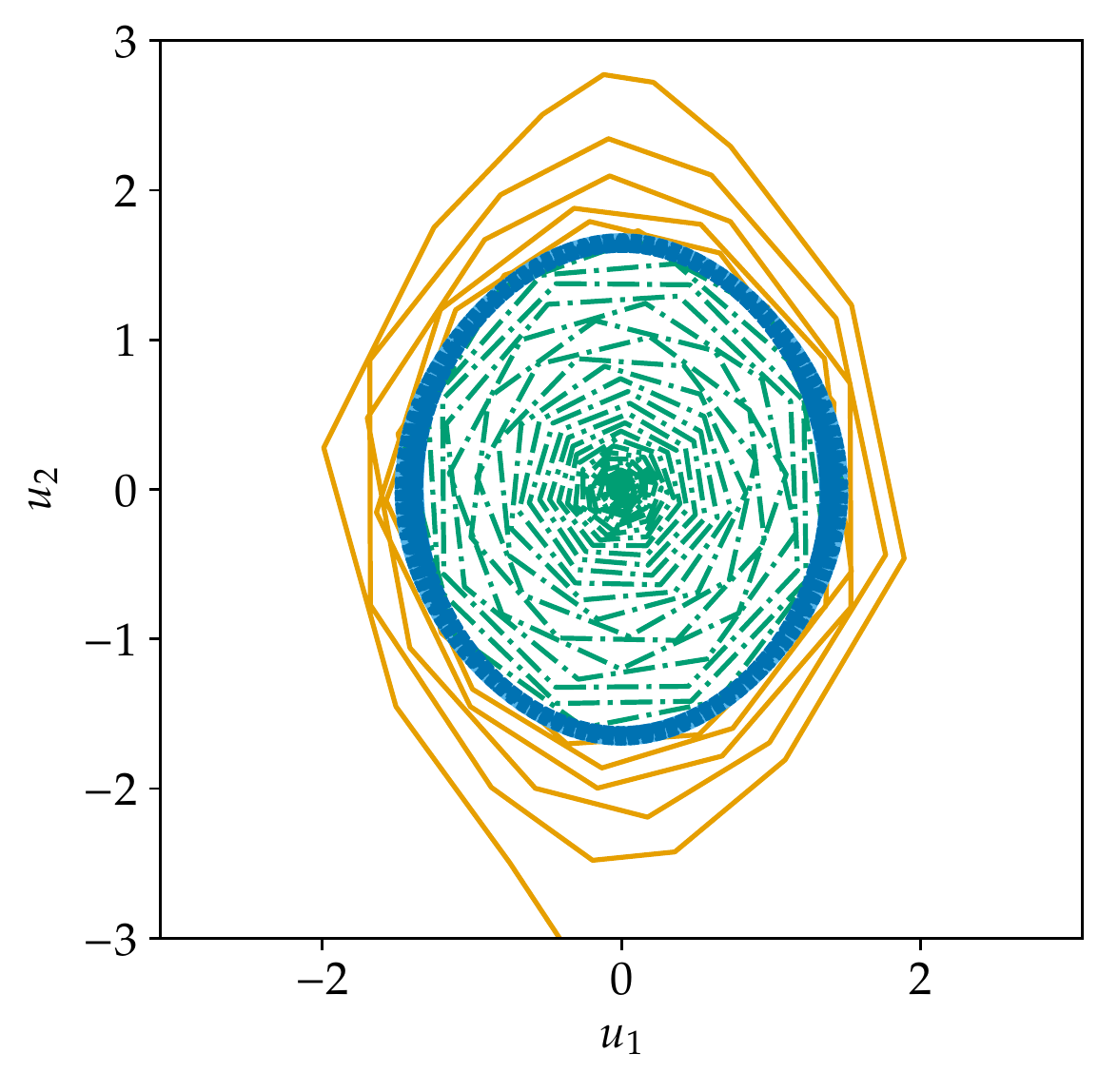}
		\caption{Trajectory of the nonlinear pendulum} 
	\end{subfigure}
	\caption{Several information for the nonlinear pendulum. $\Delta t = 0.9, T=1000$. }
	\label{fig:nonpen-timesteps}
\end{figure}

Now, we look also at how the number of time steps are changing when applying the relaxation. \\
If we define $N$ as the amount of time steps needed to reach $T_{end}$ we have $N = \frac{T_{end}}{\Delta t}$ when no relaxation is applied. However, when introducing the relaxation term, the time step size varies and so does $N$. This relation is shown in  \cref{fig:nonpen-timesteps}.
 Without relaxation $N = \frac{1000}{0.9} \approx 1111$ and with relaxation we can see that this amount is increased if the average entropy residual term is smaller than 1 and is decreased if it is greater than 1 (DeC3).
 In \cref{fig:pendolumGammas}, we use a boxplot to demonstrate the variation of the 
 relaxation factors. 
 We recognize the biggest amplitude for SSPRK(2,2)=DeC2. For higher order schemes, we notice that RK schemes has with respect to DeC schemes, in average, a larger variance in the $\gamma_n$. Moreover, DeC3 is the one which requires less time steps because of its $\gamma_n$ often larger than 1, see \cref{fig:nonpen-timesteps}.

\subsection{Numerical test in the PDE case}

In this part, we validate the relaxed time integrators for hyperbolic conservation laws 
\begin{equation}
	\partial_t U +\div F(U)=0, \quad t\in[0,T],
\end{equation}
where $U$ are conservative variables and $F(U)$ is the flux function. 

We proceed testing different approaches. First, we test the RDeC on an entropy conservative FV discretization for Burgers' equation, then we test the RDeC with the RD discretization. We check the accuracy of the RDeC-RD approach on a linear transport equation for the conservative case and we test on 2D advection and Burgers' equations.

\subsubsection{RDeC-FV for Burgers' Equation}
In a first test case, we apply the RDeC just as a time integrator using a FV spatial discretization for Burgers' equation. This is an extension of the code by Ketcheson et al. \cite{ketcheson2019relaxation,ketcheson2019_RRK_rr} with the RDeC approach. Here, we know that the spatial discretization is entropy conservative and, hence, we stop the simulation before shock formation.

The inviscid Burgers' equation reads
\begin{equation}
U_t+ \frac{1}{2}(U^2)_x=0
\end{equation}
on the interval $-1\leq x\leq 1$ with periodic boundary condition and the 
initial data $U(0,x)=\exp(-30x^2)$. For the space discretization, 
we use the flux differencing technique  and obtain the semidiscretisation 
\begin{equation}
u_i'(t) =\frac{1}{\Delta x} (F_{i+1/2}-F_{i-1/2})
\end{equation}
with the two-point numerical flux $F_{i+1/2}=\frac{u_i^2+u_iu_{i+1}+u_{i+1}^2}{2}$. One can easily check that the spatial discretization is energy conservative, i.e., $$
\sum_{i} u_i (F_{i+1/2}(u)-F_{i-1/2}(u)) =0,
$$
using periodic boundary conditions.
The spatial domain is discretized with $100$ equally-spaced points and the CFL number is set to $0.3$. 
Again, for this test case the method of lines have been used and we are splitting between time and space discretization. 
In \cref{fig:BurgersFV}, we plot the entropy evolution 
up to $t=0.2$ for different time integration methods with and without relaxation.
We notice that for SSPRK(2,2)=DeC2 entropy is produced in time whereas the rest  the schemes are entropy dissipative.
By applying the relaxation technique, the change of entropy for every scheme is of the order of  machine precision. 
Finally, we like to point out that DeC methods have still better performance than classical RK methods in our simulations. This is due to the fact that DeC uses more stages than the considered RK methods. By increasing the number of stages in RK methods, 
we will obtain similar results.

Here, we only proved that the relaxation approach can be used in DeC methods together with classical space discretization methods like DG or FV approaches using the method of lines. In the next section, we apply the RDeC with RD where no method of lines is applicable. 
\begin{figure}[h!]
\begin{subfigure}{1\textwidth}
\includegraphics[width=\linewidth]{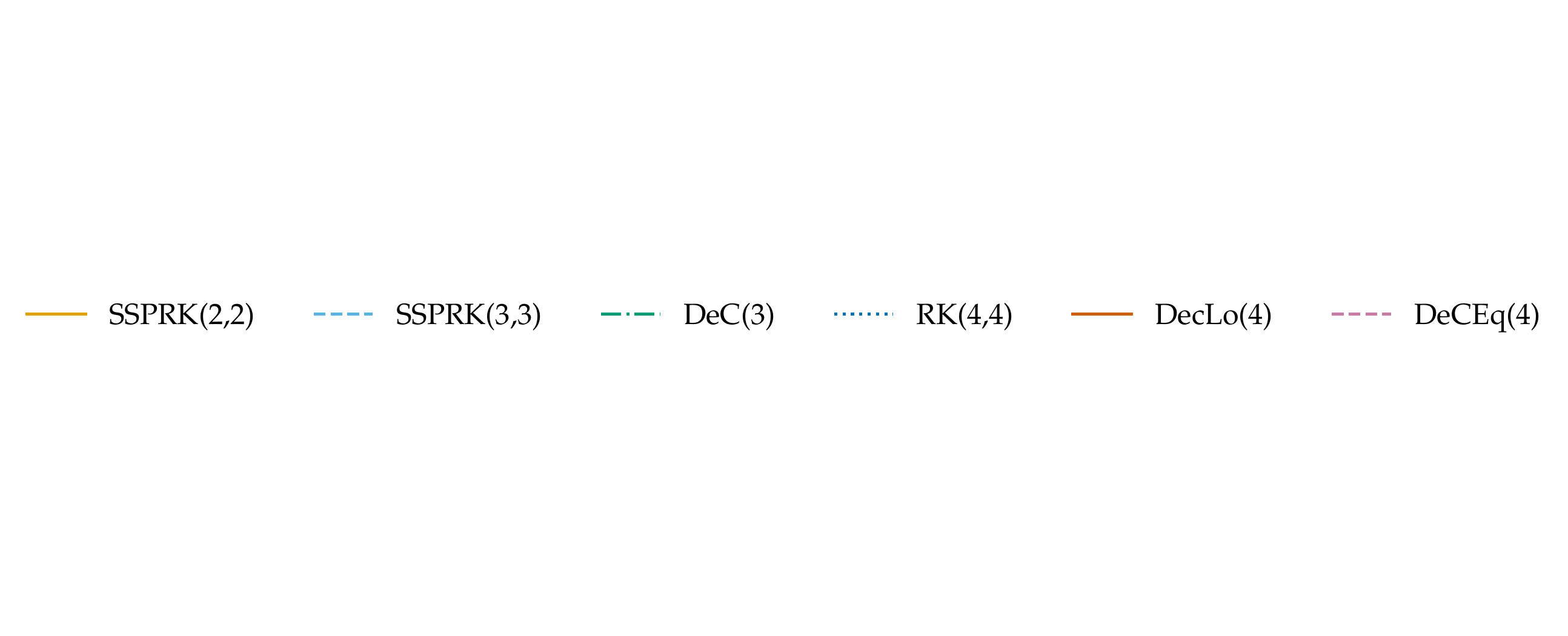}
\end{subfigure}
\begin{subfigure}{0.49\textwidth}
\includegraphics[width=\linewidth]{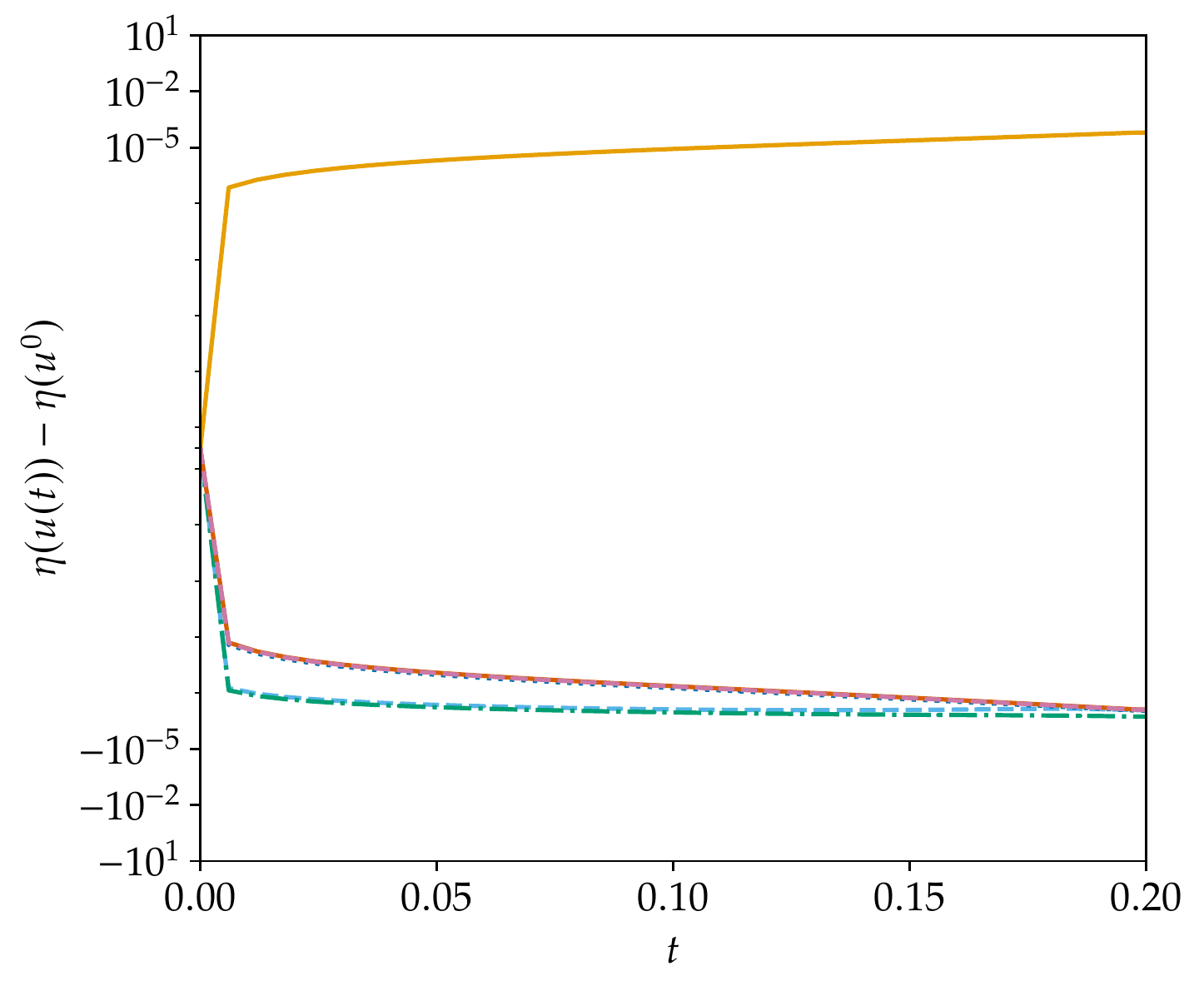}
\caption{Without relaxation} \label{fig:linoscdec}
\end{subfigure}
\hspace*{\fill} % separation between the subfigures
\begin{subfigure}{0.49\textwidth}
\includegraphics[width=\linewidth]{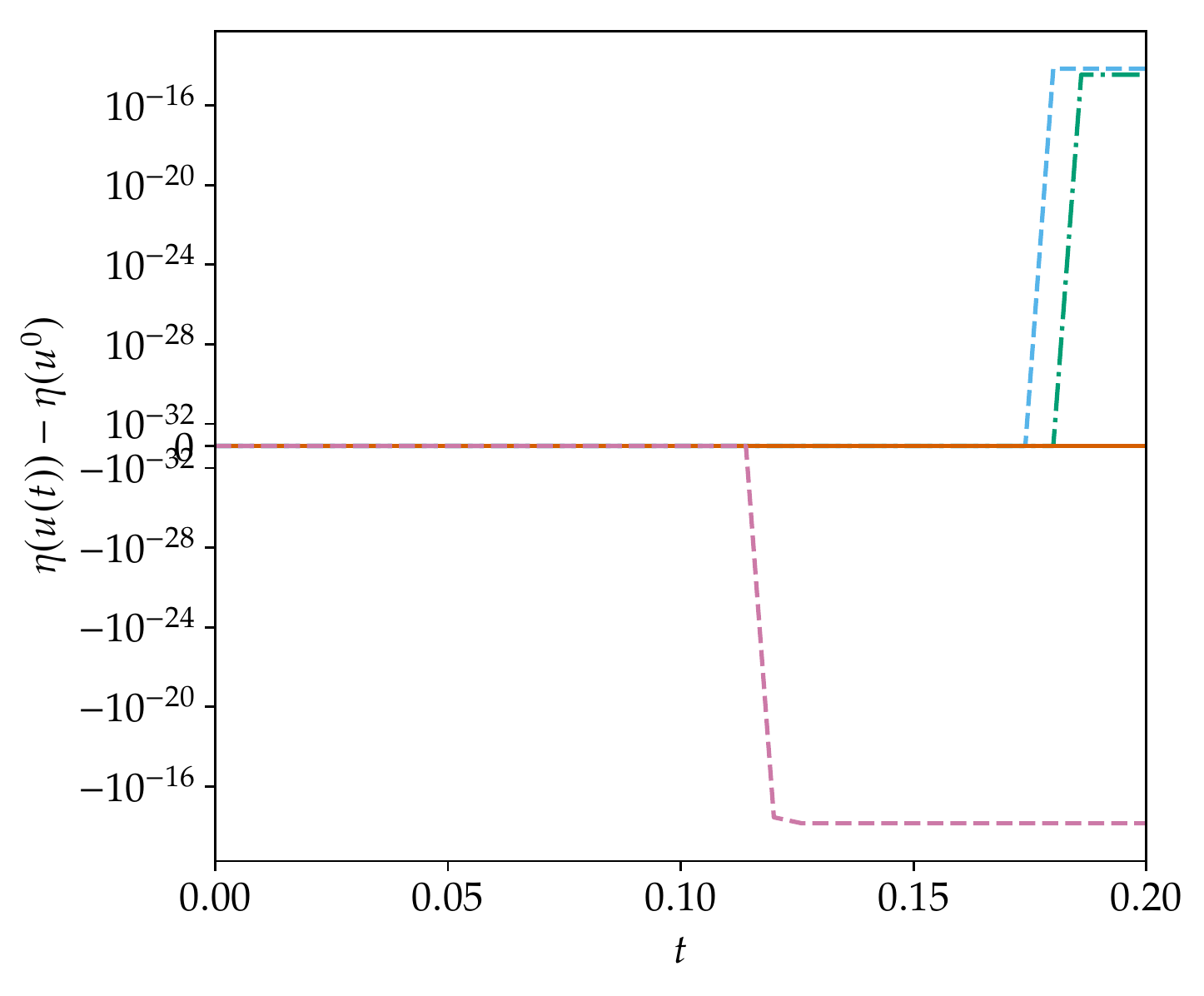}
\caption{With relaxation} \label{fig:linoscrel}
\end{subfigure}
\caption{Entropy for Burgers  with $\Delta t = 0.2$}
\label{fig:BurgersFV}
\end{figure}

\subsubsection{Entropy Conservative/ Dissipative RDeC-RD Schemes - Linear Case}
Now, we test the RDeC with the RD spatial discretization. The first test we take into consideration is the linear transport equation 
\begin{equation}
	\partial_t u +\partial_x u=0,
\end{equation}
on the domain $[0,2]$ with periodic boundary condition and initial condition $u(t=0,x)=0.1\sin(\pi x)$. Here, we aim at getting an entropy conservative scheme, as the problem is energy conservative. So, we simply impose \eqref{eq:gammaEqEnergyRDconservative} to find $\gamma_n$ at each time step. The spatial discretization is obtained with cubature elements (Lagrangian polynomials on Gauss--Lobatto points) that we denote with $\mathbb C^p$ with $p$ the polynomial degree and the residuals are obtained with Galerkin discretization plus the jump stabilization term \eqref{eq:jumpStabilization}. In particular, we use DeC$p+1$ in combination with $\mathbb C ^p$ polynomials to obtain a $p+1$th order accurate scheme.
\begin{figure}[h!]
	\centering
	\begin{subfigure}{0.49\textwidth}
		\includegraphics[width=\linewidth]{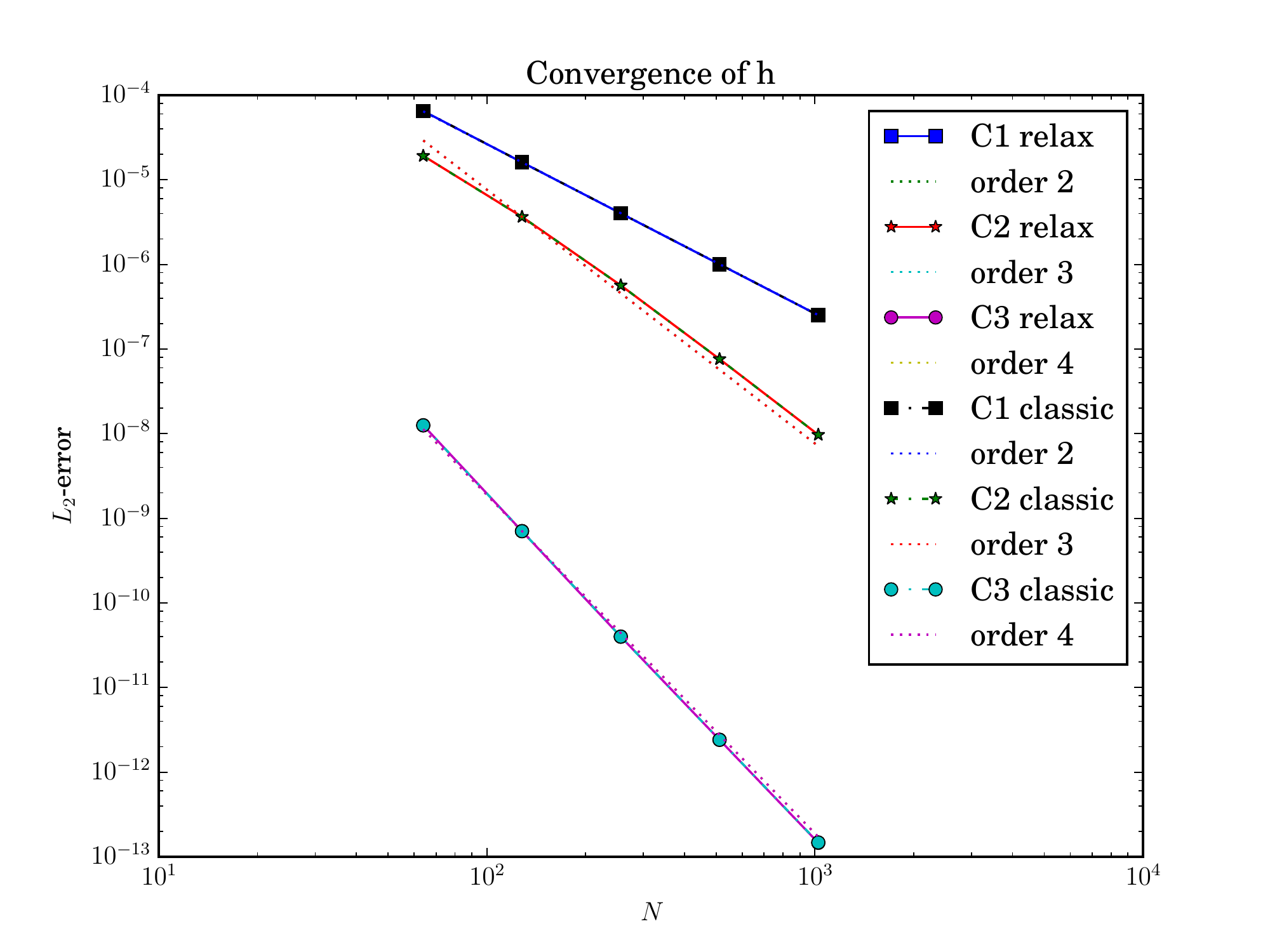}
		\caption{Error convergence} \label{fig:convLinScal}
	\end{subfigure}
	\hspace*{\fill} % separation between the subfigures
	\begin{subfigure}{0.49\textwidth}
		\includegraphics[width=\linewidth]{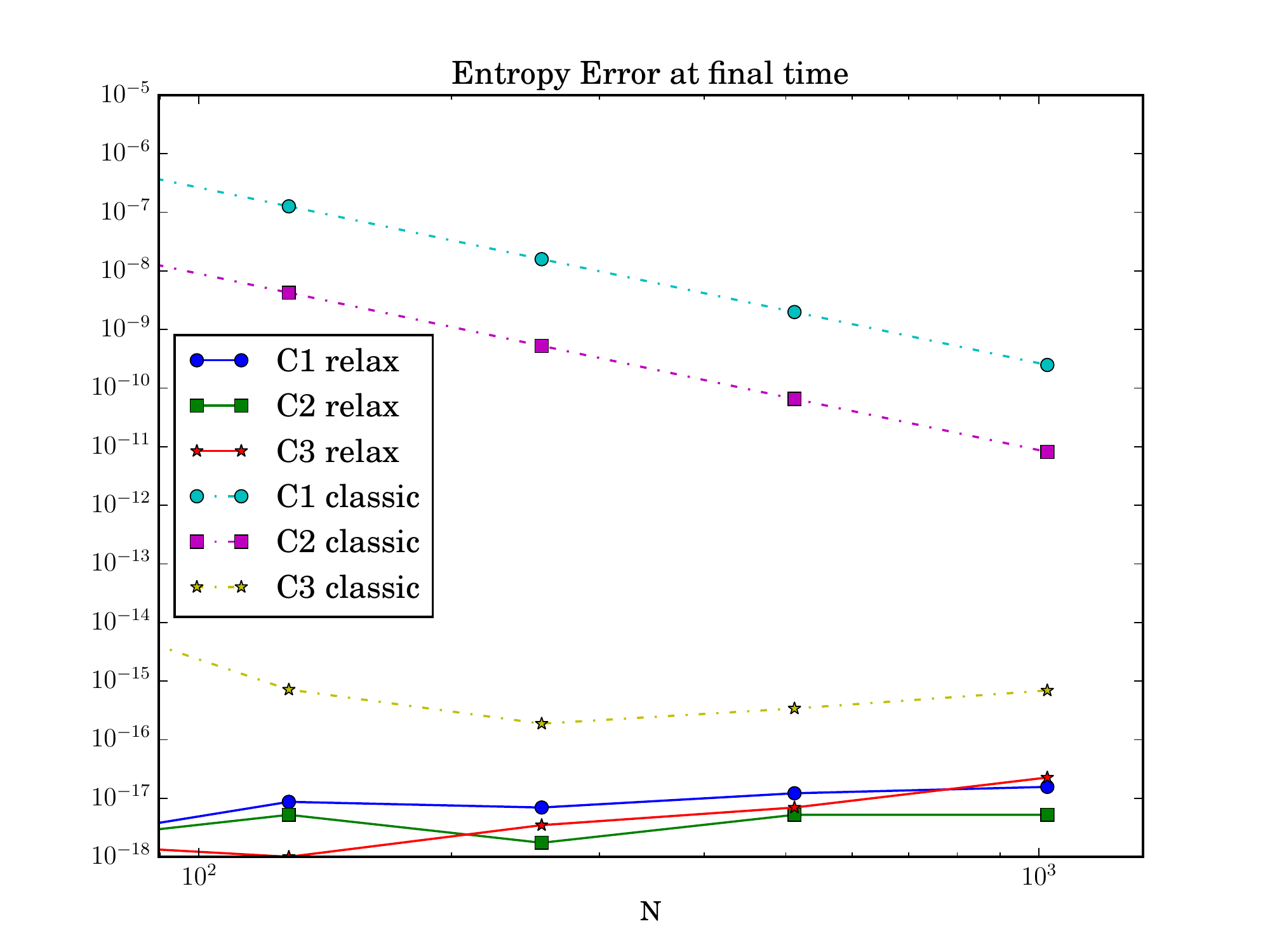}
		\caption{Error of the entropy} \label{fig:convEntropyLinScal}
	\end{subfigure}\\
	\begin{subfigure}{0.49\textwidth}
	\includegraphics[width=\linewidth]{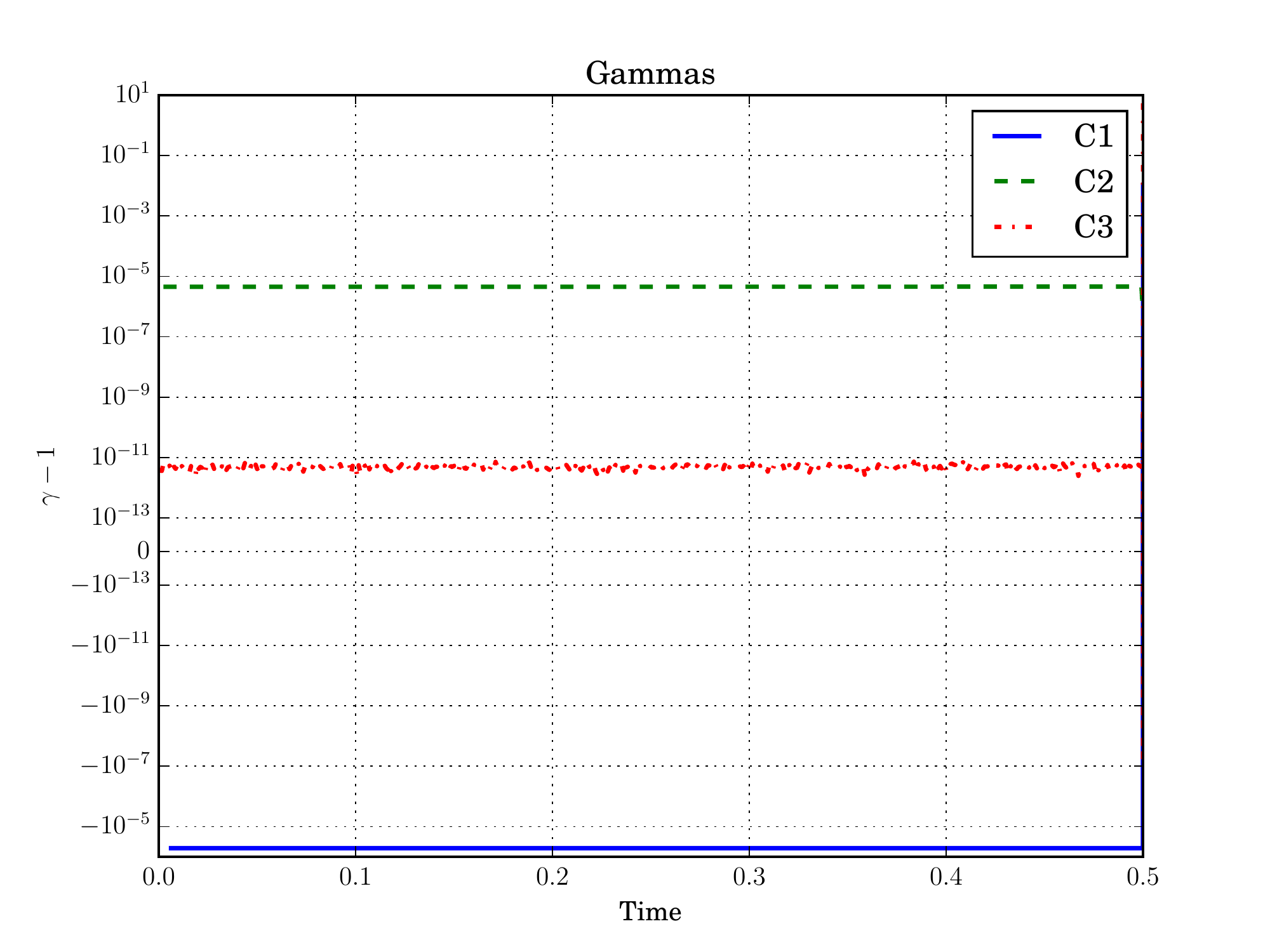}
	\caption{$\gamma_n-1$ as a function of time for RDeC-RD with $N=256$ spatial cell discretization} \label{fig:gammasLinTransport}
\end{subfigure}
	\caption{Errors convergence for linear transport problem with DeC/RDeC-RD approach}
	\label{fig:linTransport}
\end{figure}
In \cref{fig:convLinScal} we observe that the error of the DeC does not decrease adding the relaxation procedure, while in \cref{fig:convEntropyLinScal} the energy of the relaxation methods is of the order of machine precision, much lower with respect to classical methods where the entropy error is of the order of the spatial discretization.

In order to understand the size of the correction of the time step $\gamma$, in figure \cref{fig:gammasLinTransport} we plot $\gamma-1$ as a function of time for the different methods. For second and third order schemes we increase the time step, while for fourth order we decrease it. Anyway, the order of $\gamma-1$ is proportional to the accuracy of the scheme itself. Overall, we proved that the RDeC-RD method obtains the desired result. \\

After the one-dimensional set up, we extend our investigation to a two-dimensional rotation problem as it is also investigated in \cite{abgrall2020analysis,abgrall2019reinterpretation}. We have the following problem: 
\begin{equation}\label{eq:linear_advec_two}
\begin{aligned}
      \partial_t u(t,x,y) + \partial_x(2\pi y \, u(t,x,y)) + \partial_y(2\pi x\, u(t,x,y)) =0&,
      && (x,y) \in D, t\in (0, 1), \\
     u(0,x,y)=u_0(x,y)=\exp\bigl( -40(x^2+(y-0.5)^2) \bigr)&,&& (x,y)\in D,
\end{aligned}
\end{equation}
where $D$ is the unit disk in $\R^2$.
For the boundary, outflow conditions are considered. For time integration,
the second order DeC methods is used  in the RD framework with 
CFL number $0.8$.  From the previous simulations, we deduce that the highest effect of the relaxation approach will be seen for low order methods. Therefore, we limit ourselves at DeC2 in the following.
A continuous Galerkin scheme with entropy correction and jump stabilization (i.e. $\nu =0.05$)  of second order  with Bernstein polynomials are applied as basis functions, see \cite{abgrall2018general}. We want to remark that even if the residual is defined to be diffusive and entropy dissipative, thanks to its high order character, we were able to use the relaxation approach \eqref{eq:gammaEqEnergyRDconservative} gaining the exact entropy behavior\footnote{We apply the same test also without jump stabilization. The results where quite similar to the ones presented. The only different was in the change of energy  which has been  closer to zero due to the semidiscrete entropy conservativity of the scheme (between $\approx10^{-8}-10^{-12})$.}.

In this test, a small bump centered in $(0,0.5)$ with radius 0.25 
is moving around (0,0) in the circle $D$. The rotation is completed at $t = 1$.
The mesh contains 3576 triangular elements.  In \cref{entrokey},
the change in the energy is given after approximately half rotation and after one full rotation.
\begin{table}[!ht]
\centering
  \begin{tabular}{|c|c|c|}
	\hline
	Rotation & without relaxation & with relaxation\\ 
	%     0.00    &       0      & 0 \\
	%0.25   &   2.2115671337273099 $\cdot 10^{-20}  $  & $2.2115671340405919\cdot 10^{-20}$\\
	$1/2$  &    $ -5.5864522681\cdot 10^{-4}  $  & $1.7347234760\cdot10^{-17} $ \\ 
	%     0.75      &    $1.1227679980638192\cdot 10^{-9}  $  & $1.1227679980608775\cdot 10^{-9} $\\
	1  &    $-1.0268559191\cdot 10^{-3}$    & $1.7961196366\cdot 10^{-17}$\\ \hline
\end{tabular}
 \caption{Total energy  change  $\int_\Omega 0.5U^2(t)-\int_\Omega 0.5U^2_0$ of numerical solutions using a continuous Galerkin  scheme for the linear test problem \eqref{eq:linear_advec_two}.}
  \label{entrokey}
\end{table}
We apply the relaxation approach in every step and adapt the time step in respect to the entropy production/destruction.
In case that the relaxation approach is used we need less steps to obtain the full rotation (505 to 544 time steps).

Finally, we would like to remark that similar results have been observed using higher order approximations but, as mentioned before, the biggest effect on the entropy can be observed on low order approximations.

\subsubsection{Entropy Conservative/ Dissipative RDeC-RD Schemes - Nonlinear Case}

After this smooth test cases, we finally  apply the relaxation approach on a nonlinear problem where actually
a shock will appear. We test the two dimensional Burgers' equation
\begin{equation}\label{eq:burgers_two}
\begin{aligned}
      \partial_t u(t,x,y) + \partial_x(0.5 u^2(t,x,y)) + \partial_y( u(t,x,y)) =0&,
      && (x,y) \in D, t\in (0, 1), \\
     u(0,x,y)=u_0(x,y)=\exp\bigl( -40(x^2+(y)^2) \bigr)&,&& (x,y)\in D,
\end{aligned}
\end{equation}
where $D$ is the unit disk in $\R^2$ and  outflow boundary conditions are considered. \\
The DeC2 method is used with 
CFL number $0.35$. We apply again a continuous Galerkin scheme with entropy correction and jump stabilization (i.e. $\nu =0.1$)  of second order  with Bernstein polynomials on the same grid as before. 
We run our simulation until 0.5: after the shock formation. 
As explained before, we can either decide to be entropy dissipative in the definition of the residuals or entropy conservative, with the entropy correction. This problem has a dissipative nature. 
The entropy correction with the jump stabilization already has a dissipative nature. We test the relaxation algorithm starting both from the entropy correction \eqref{eq:system} and the dissipative entropy correction \eqref{eq:system_second}.
For the
square entropy  $U=u^2 / 2$, the differences $\int_\Omega U^2(0.5)-\int_\Omega U^2_0$
of the entropies with and without relaxation are given in \cref{tab:entropyBurgers2D}.
\begin{table}
	\centering
	\begin{tabular}{|c|c|c|c|c|}
		\hline
		Final time & Entropy Correction & + Jump & + Relaxation & + Relaxation + Jump\\ \hline
		0.22 & $-1.97\cdot 10^{-4}$ &   $ -1.51\cdot 10^{-3}$ &      $-2.43 \cdot 10^{-17}$          &        $-3.46 \cdot 10^{-18}$ \\
		0.5 & $-1.51\cdot 10^{-3}$ &     $-3.64\cdot 10^{-3}$   &     $ 1.39 \cdot 10^{-17}   $      &         $-3.12\cdot 10^{-17}$ \\ \hline
	\end{tabular}\caption{Entropy variation before and after the shock}\label{tab:entropyBurgers2D}
\end{table}

\begin{table}
	\centering
	\begin{tabular}{|c|c|c|c|c|}\hline
		Final time & Entropy Correction & + Jump & + Relaxation & + Relaxation + Jump\\ \hline
		0.22 &  $\approx 1.1$ & $ \approx 0.93$  &   $ \approx 1.2$ & $\approx 1.0$\\
		0.5 &$\approx 1.3$ &$\approx 0.83$     &     $\approx 1.4 $             &$\approx 1.1$\\ \hline
	\end{tabular}\caption{Maximum of $u$ before and after the shock}\label{tab:maxUBurgers2D}
\end{table}
The results of the simulation can be seen in  \cref{fig:Burgers}.
The left pictures demonstrated the result without relaxation while in the right picture relaxation has been used. 
We would like to point out that for this test case, we need not even half of the number of steps to get to endpoint when the relaxation approach has been used but this comes with some disadvantage.
Before the shock forms the relaxation schemes are much more accurate, as they keep the total energy conserved as the entropy solution should, while the dissipative approach of the entropy correction with jump stabilization already decreases widely this quantity, see \cref{tab:entropyBurgers2D}. Also the maximum value, which should stay constant before the shock formation, is decreased in the dissipative simulations thanks to the diffusive terms, while it is a bit increased in the entropy conservative simulation, as possible some dissipation is transformed into dispersion through the relaxation.

After the shock, the scheme with relaxation is more smeared and the shock profile is not sharp, as it does not converge to the entropy solution, forcing it to stay at the initial level. On the other side, the dissipative scheme is quite clear and correctly catches the shock structure. This is not surprising at all. Due the presence of a shock,  a strict inequality is 
needed in the energy(entropy) equation, while, in the energy conservative approximation, the equality is enforced, violating the physics behind this test. 
\begin{figure}[!ht]
\centering
        \includegraphics[width=0.32\textwidth, trim={140 30 340 30},clip]{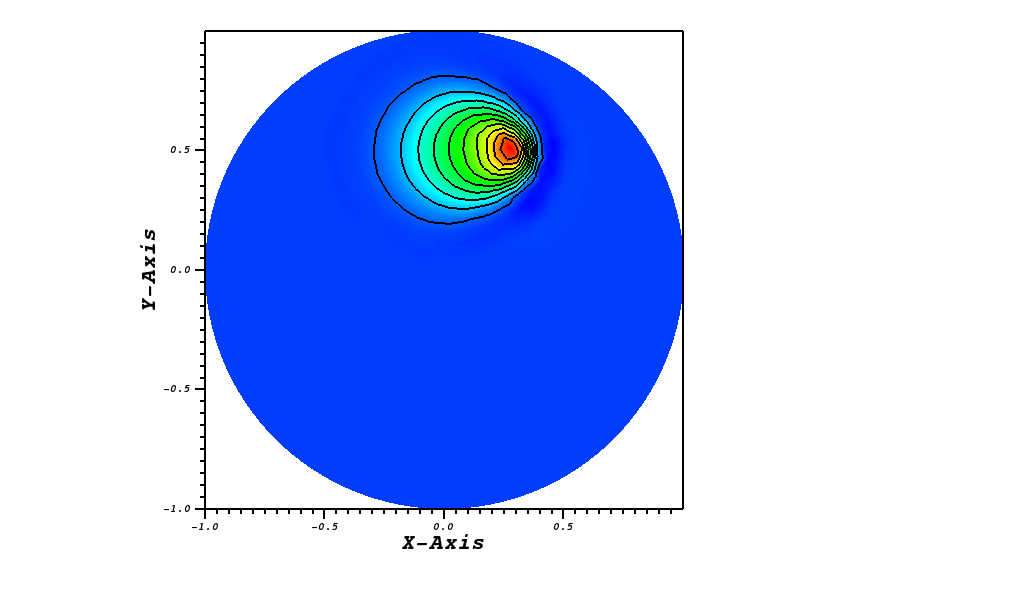}
    \includegraphics[width=0.32\textwidth, trim={140 30 340 30},clip]{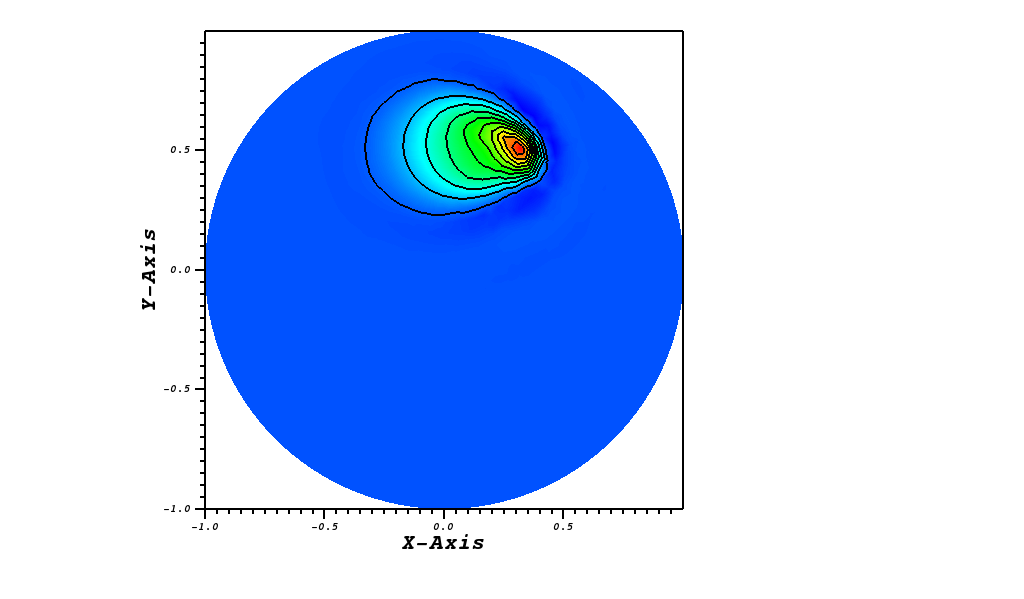}
  \caption{$t=0.5$, DeC(2), Bernstein polynomials, 3576 elements, CFL=$0.35$, left entropy correction + jump, right relaxation + entropy correction + jump}
 \label{fig:Burgers}
\end{figure}

All in all, we can conclude that the relaxation DeC approach is working fine combined with the RD approach but special care has to be taken when shock appears.  

\begin{remark}[Comparison to \cite{abgrall2019reinterpretation}]
Up to this point, we have considered the relaxation approach only for the square entropy to avoid the usage of a nonlinear solver in our RD code. An extension to the Euler equations will be considered in the future as well as a comparison with the approach presented in \cite{abgrall2019reinterpretation} where the entropy correction term was used not only for the space discretization but for the whole space-time residual. From  first numerical results, we do not want to hide that the approach using the entropy correction term completely seems more promising and completely general as a combination of entropy correction plus relaxation. Also no optimization problem has to be solved when general entropies are considered. A deeper analysis including more challenging tests will be considered in the future to investigate this topic. This is beyond the scope of this paper where we concentrate on the relaxation DeC approach and the application in RD. 
\end{remark}

\section{Conclusion}\label{sec:conclusion}
In this work, we extended the relaxation technique \cite{ketcheson2019relaxation} to the arbitrarily high order DeC time integration method, in particular in its applications to RD where the MOL is not applicable. In this context, a spatial entropy preserving discretization is available \cite{abgrall2018general} and its combination with the relaxation algorithm allows to obtain a global entropy conservative or dissipative scheme. 
The whole procedure require some choices, for example in the construction of the equation that we want to set equal to 0 to find the relaxation parameter $\gamma_n$.
Putting together all these ingredients one can obtain an entropy conservative or entropy dissipative arbitrarily high order matrix-free method to solve general hyperbolic PDEs.

This topic can be expanded in different directions, first of all more general tests with nonlinear entropies could be studied, for example in Euler's equations, or a transition between the conservative and dissipative regime could be thought and implemented in order to be used on more general cases where a priori it is unknown the nature of the problem. Finally, a deeper analysis between the presented approach and the one from \cite{abgrall2019reinterpretation} is also desirable. 
%\clearpage 

\small
\subsubsection*{Acknowledgements}
E. Le M\'el\'edo has been funded by the the SNF project (Number 175784).  P. \"Offner  has also been funded by 
 the UZH Postdoc Forschungskredit (Number FK-19-104), the SNF project (Number 175784) and the Gutenberg Fellowship (University Mainz). D. Torlo has been funded by Team CARDAMOM in INRIA Bordeaux - Sud-Ouest. \\
 P. \"Offner likes to thank Vinzenz Muser for some early discussions about the topic and 
H. Ranocha for fruitful conversations about the relaxation  technique. RA was partially funded by an International chair, Inria Bordeaux Sud-Ouest.

%appendix

\appendix
\section{Another possible relaxation formulation}\label{sec:PhilippWay}
The relaxation DeC RD presented before is not unique. There,  different possibilities related to the weighting of $\Delta t$ of $\L^2$ or both $\L^1$ and $\L^2$. This does not affect the ODE case, but in the PDE case there are some differences. Here, we modify only $\L^2$. Let us restart from the formulation for the final update step, i.e.,
\begin{equation}
	U_\sigma^{n,l,(k)} = U_\sigma^{n,l,(k)} -
	|C_\sigma|^{-1} 
	\sum_{K|\sigma \in K}\bigg(\oint_K \varphi_\sigma (U^{n,l,(k)} - U^{n,0}) \diff x  + 
	\Delta t \sum_{r=0}^M \theta_{r}^{l} \Phi_{\sigma,x}^K(U^{n,r,(k)}) \bigg) \tag{\ref{oneline}}.
\end{equation}
Focusing on the energy for simplicity, the relaxation term has been given by \eqref{eq:gamma_RK}
and was calculated by determining the energy production in RK schemes. 
In the RD framework, we cannot simple apply this term since by focusing on \eqref{oneline}, we realize  that we have  additional terms in the update which are not even multiplied by the time step $\Delta t$.
Therefore, we compare first the change of the energy between two time steps using 
\eqref{oneline}.

It is given by 
the following calculation on one  degree of freedom\footnote{For simplicity, we avoid the usage of n in the following.}:
\begin{align*}
	\norm{U_\sigma^{M,(K)} }^2-\norm{ U_\sigma^{0}}^2
	= &\left(U_\sigma^{M,(K-1)}-
	|C_\sigma|^{-1} 
	\sum_{K|\sigma \in K}\bigg(\int_K \varphi_\sigma (U_\sigma^{M,(K-1))}  - U^{0})\diff x + 
	\Delta t \sum_{r=0}^M \theta_{r}^M \Phi_{\sigma,x}^K(U_\sigma^{r,(K-1)} ) \bigg)\right)^2 
	\\&- (U_\sigma^0)^2.
\end{align*}
We can reorder the equation and get 
\begin{align*}
	=& \Bigg( \Bigg( |C_\sigma|^{-1} \sum_{K|\sigma \in K}  \int_K \varphi_\sigma  U^{0}dx\Bigg) + \left( U_\sigma^{M,(K-1)} - |C_\sigma|^{-1} 
	\sum_{K|\sigma \in K}\int_K \varphi_\sigma U^{M,(K-1)} \diff x  \right)  \\
	&+ |C_\sigma|^{-1}  \Delta t\sum_{K|\sigma \in K}
	\sum_{r=0}^M \theta_{r}^M \Phi_{\sigma,x}^K(U^{r,(K-1)}) \Bigg)^2- (U_\sigma^0)^2.
\end{align*}
Here, the first term describes an approximation of $U^0$, the second term is some approximation of $U_\sigma^{M,(K-1)}$ and the rest is the update scheme. 
We apply in the following the abbreviations
\begin{align}
	A&=  \left( |C_\sigma|^{-1} \sum_{K|\sigma \in K}  \int_K \varphi_\sigma  U^{0}dx\right) \\
	B&:= \left( U_\sigma^{M,(K-1)} - |C_\sigma|^{-1} 
	\sum_{K|\sigma \in K}\int_K \varphi_\sigma U^{M,(K-1)} \diff x \right)\\
	C&=|C_\sigma|^{-1} \Delta t \sum_{K|\sigma \in K}
	\sum_{r=0}^M \theta_{r}^M \Phi_{\sigma,x}^K(U^{r,(K-1)})
\end{align}
We can now focus again on above equation and get
\begin{equation}\label{eq_P:2}
	\begin{aligned}
		=& (A+B+C)^2-(U_\sigma^0)^2
		=(A^2+2AB+2AC+B^2+2BC+C^2)-(U_\sigma^0)^2\\
		=& \underbrace{A^2-(U_\sigma^0)^2 +2AB +B^2}_{D}
		+2BC+2AC+C^2.
	\end{aligned}
\end{equation}
The term $D$ does not depend on $\Delta t$  but depends on the used quadrature formula in space. $AC$ and $BC$ depends on $\Delta t$ through $C$ and $C^2$ behaves with  $\Delta t^2$.
We focus now on the $AC+BC$ and get
\begin{align*}
	&|C_\sigma|^{-1} \left(\sum_{K|\sigma \in K}
	\sum_{r=0}^M \theta_{r}^M \Phi_{\sigma,x}^K(U^{r,(K-1)}) \right)
	\left( U_\sigma^{M,(K-1)} -|C_\sigma|^{-1} \sum_{K|\sigma \in K} \int_K \varphi_\sigma \left(U^{M,(K-1)}-U^{0} \diff x \right)   \right) \\
	=&|C_\sigma|^{-1} \sum_{r=0}^M  \theta_{r}^M \sum_{K|\sigma_1 \in K}
	\est{  \Phi_{\sigma_1}^K(U^{r,{K-1}}), 
		\left( U_\sigma^{M,(K-1)} -|C_\sigma|^{-1} \sum_{K|\sigma \in K} \int_K  \varphi_\sigma \left(U^{M,(K-1)}-U^{0} \right)   \right) + \left(U_{\sigma_1}^{r,(K-1)}- U_{\sigma_1}^{r,(K-1)} \right)} \\
	=&|C_\sigma|^{-1} \sum_{r=0}^M  \theta_{r}^M\left(
	\sum_{K|{\sigma_1} \in K} \est{\Phi_{\sigma_1}^K(U^{r,(K-1)}), U_{\sigma_1}^{r,(K-1)}} \right)\\
	&+\underbrace{
		|C_\sigma|^{-1} \sum_{r=0}^M  \theta_{r}^M
		\left(\sum_{K|{\sigma_1} \in K} 
		\est{\Phi_{\sigma_1}^K(U^{r,(K-1)}), \left( U_\sigma^{r,(K-1)} -|C_\sigma|^{-1} \sum_{K|\sigma \in K} \int_K  \varphi_\sigma \left(U^{r,(K-1)}-U^{0} \right)   \right) -U_{\sigma_1}^{r,(K-1)}  } \right)}_{0.5E}
\end{align*}
The first term will cancel out  if our space residual is energy conservative\footnote{For an energy dissipative scheme, we have the right sign in this term and it has not further considered.}  where the second term yields some rest to the equation. Here, the braces depend highly on the used quadrature rule. 
We can obtain a recurrence relation inserting the corrections for the terms. Nevertheless, the remaining term is at least $\Ol(\Delta t)$.
Therefore, we have now in total for the energy production $D+E+C^2$.
Using now the relaxation approach, we can multiply with a $\gamma_n$ our $\theta$.
Here, $\gamma_n$ will be the solution of the following equation
\begin{equation*}
	D+\gamma_n E+\gamma_n^2 C^2=0.
\end{equation*}
Actually, $\gamma_n$ will be always positive and close to one  if our quadrature rule is sufficiently accurate. With this approach we obtain that our DeC-RD approach is energy conservative (dissipative) in space and time.

\small
\bibliographystyle{abbrv}
\bibliography{literature}

\end{document}